\documentclass[12pt]{amsart}
\pdfoutput=1 

\usepackage[
text={440pt,575pt},
headheight=9pt,
centering
]{geometry}

\usepackage{amsmath, amssymb, amsthm, enumerate, bm}
\allowdisplaybreaks 
\usepackage{mathtools}

\usepackage{graphicx}
\usepackage{subcaption}
\usepackage[normalem]{ulem}

\usepackage{multirow}

\usepackage{pifont}
\newcommand{\cmark}{\ding{51}}%
\newcommand{\xmark}{\ding{55}}

\usepackage{mathptmx}


\usepackage[cal=pxtx]{mathalfa}
\DeclareMathAlphabet{\pazocal}{OMS}{zplm}{m}{n}

\usepackage{xcolor}
\usepackage{hyperref}
\definecolor{darkblue}{RGB}{0,0,160}
\hypersetup{
colorlinks,%
citecolor=darkblue,%
filecolor=black,%
linkcolor=darkblue,%
urlcolor=darkblue
}


\usepackage{todonotes}
\setlength{\marginparwidth}{2.64cm}

\makeatletter
\newcommand{\nolisttopbreak}{\vspace{\topsep}\nobreak\@afterheading}
\makeatother

\theoremstyle{definition}
\newtheorem{definition}{Definition}[section]

\newtheorem{theorem}[definition]{Theorem}
\newtheorem{proposition}[definition]{Proposition}
\newtheorem{lemma}[definition]{Lemma}
\newtheorem{corollary}[definition]{Corollary}

\newtheorem*{fact*}{Fact}
\newtheorem{remark}[definition]{Remark}
\newtheorem{example}[definition]{Example}

\newtheorem{problem}[definition]{Problem}

\renewcommand\>{\rangle}
\newcommand\<{\langle}

\newcommand{\A}{\mathcal{A}}

\newcommand{\C}{\mathcal{C}}

\newcommand{\M}{\mathcal{M}}
\newcommand{\NN}{\mathbb{N}}

\newcommand{\RR}{\mathbb{R}}
\newcommand{\R}{\mathcal{R}}
\renewcommand{\P}{\mathcal{P}}
\renewcommand{\S}{\mathcal{S}}

\newcommand{\ZZ}{\mathbb{Z}}

\def\Icc{{\mathcal I}}

\def\ol#1{{\overline {#1}}}

\newcommand{\lpi}{\makebox{\large\ensuremath{\varpi}}}

\newcommand{\compcent}[1]{\vcenter{\hbox{$#1\circ$}}}
\newcommand{\comp}{\mathbin{\mathchoice
  {\compcent\scriptstyle}{\compcent\scriptstyle}
  {\compcent\scriptscriptstyle}{\compcent\scriptscriptstyle}}}

\newcommand{\restr}[1]{|_{#1}}

\DeclareMathOperator{\cl}{cl}
\DeclareMathOperator{\ccl}{\operatorname{\text{\usefont{U}{BOONDOX-calo}{m}{n}cl}}}

\DeclareMathOperator{\idcl}{id}
\DeclareMathOperator{\cncl}{cn}
\DeclareMathOperator{\mndcl}{mn}

\DeclareMathOperator{\cid}{\operatorname{\text{\usefont{U}{BOONDOX-calo}{m}{n}i\kern-1.1pt{}d}}}
\DeclareMathOperator{\ccn}{\operatorname{\text{\usefont{U}{BOONDOX-calo}{m}{n}c\kern-0.3pt{}n}}}
\DeclareMathOperator{\cmnd}{\operatorname{\text{\usefont{U}{BOONDOX-calo}{m}{n}m\kern-1pt{}n}}}
\DeclareMathOperator{\cidl}{\operatorname{\text{\usefont{U}{BOONDOX-calo}{m}{n}i\kern-1.1pt{}d\kern-0.8pt{}l}}}

\DeclareMathOperator{\Sym}{Sym}
\DeclareMathOperator{\cSym}{\operatorname{\text{\usefont{U}{BOONDOX-calo}{m}{n}Sy\kern-1.5pt{}m}}}

\DeclareMathOperator{\Inc}{Inc}
\DeclareMathOperator{\cInc}{\operatorname{\text{\usefont{U}{BOONDOX-calo}{m}{n}In\kern-1.7pt{}c}}}

\DeclareMathOperator{\ini}{in}
\DeclareMathOperator{\Mon}{mon}
\DeclareMathOperator{\Mo}{Mon}

\DeclareMathOperator{\supp}{supp}

\DeclareMathOperator{\cone}{cone}

\DeclareMathOperator{\width}{width}

\DeclareMathOperator{\var}{var}

\DeclareMathOperator{\ind}{ind}

\newcommand{\oInc}{\overline{\Inc}}

\newcommand\defas{\coloneqq}


\title{
Invariant chains in algebra and discrete geometry
}

\author{Thomas Kahle, Dinh Van Le, and Tim R\"{o}mer}

%
%

\date{\today}

\subjclass[2010]{Primary: 05E18, 90C05; Secondary: 13A50, 20M30, 52B15}
%
%

\keywords{cone, monoid, ideal, equivariant, symmetric group}

\thanks{The first author is partially supported by the DFG (314838170,
  GRK 2297, ``MathCoRe'').}

\begin{document}
\begin{abstract}
  We relate finite generation of cones, monoids, and ideals in
  increasing chains (the \emph{local} situation) to equivariant finite
  generation of the corresponding limit objects (the \emph{global}
  situation).  For cones and monoids there is no analog of
  Noetherianity as in the case of ideals and we demonstrate this in
  examples. As a remedy, we find local-global correspondences for
  finite generation.  These results are derived from a more general
  framework that relates finite generation under closure operations to
  equivariant finite generation under general families of maps.  We
  also give a new proof that non-saturated Inc-invariant chains of ideals stabilize, closing a gap in the literature.
\end{abstract}
\maketitle


\section{Introduction}
\label{sec:introduction}

Finite generation of algebraic and geometric objects is a central
necessity to efficiently work with these objects and to represent them
in a computer.  A well-known and important finiteness principle in
algebra is Noetherianity.  A commutative ring $R$ is Noetherian if
every ideal $I\subseteq R$ is finitely generated, or equivalently,
every ascending chain $I_{1}\subseteq I_{2}\subseteq \dotsb $ of
ideals eventually stabilizes, that is, from some index on, all
$\subseteq$ are equalities.

In some cases symmetry can augment finiteness.  E.g., the polynomial
ring $K[x_{1},\dotsc,x_{n}]$ over a field $K$ is Noetherian, but
$K[x_{1},x_{2},\dotsc]$ is not, since
$\<x_{1}\> \subsetneq \<x_{1},x_{2}\> \subsetneq \dotsb$ is an
infinite ascending chain.  Yet, polynomials
$f\in K[x_{1},x_{2},\dotsc]$ have finitely many terms, so each is
contained in some Noetherian subring
$K[x_{1},\dotsc, x_{n}] \subseteq K[x_{1},x_{2},\dotsc]$.  This
finiteness can be systematically investigated, for example, by
exploiting the action of symmetric groups, which renumber the
indeterminates.  The chain
$\<x_{1}\> \subsetneq \<x_{1},x_{2}\> \subsetneq \dotsb$ has the
property that its $n$-th ideal $\<x_{1},\dotsc,x_{n}\>$ arises from
the first $\<x_{1}\>$ by an action of the symmetric group $\Sym(n)$
and ideal closure:
$\<x_{1},\dotsc,x_{n}\> = \< \Sym(n) (\<x_{1}\>)\>$.  A~theorem of
Cohen~\cite{Co67} and Aschenbrenner-Hillar~\cite{AH07} states that if
a chain of ideals $(I_{n})_{n}$ with
$I_{n} \subseteq K[x_{1},\dotsc,x_{n}]$ is \emph{$\Sym$-invariant} in
the sense that $\<\Sym(n+k)(I_{n})\> \subseteq I_{n+k}$ for all
$n,k\in \NN$, then eventually the chain stabilizes, in the sense that
for large enough $n$, $\<\Sym(n+k)(I_{n})\> = I_{n+k}$ for all
$k\in \NN$.  Additionally, the union
$I_\infty \defas \bigcup I_{n} \subseteq K[x_{1},x_{2},\dotsc]$ is
generated by finitely many $\Sym$-orbits.  These facts, which are
called \emph{$\Sym$-Noetherianity} or \emph{equivariant Noetherianity}
of $K[x_{1},x_{2},\dotsc]$, have interesting
applications~\cite{AH07,Co67,hillar2012finite} and inspired lots of
recent work. See, e.g., \cite{Dr14,kahle2014equivariant,LNNR,LNNR2,
  le2020kruskal,nagel2017equivariant,nagel2019FIOI}.

We aim to explore such equivariant finiteness principles in the
broader context of discrete geometry.  For example, we are looking for
a framework to formulate equivariant versions of theorems from
polyhedral geometry.  For this one needs to consider chains of cones
$(C_{n})_{n}$ with $C_{n}\subseteq \RR^{n}_{\ge 0} $
with a suitable equivariance
(Definition~\ref{d:invariant} contains the general version).

Equivariant Noetherianity cannot hold in these setting as the ambient
$\RR^{n}_{\ge 0}$
has no Noetherianity:
There are non-polyhedral (i.e., not finitely generated) cones such as
the open orthant, defined by positivity of all coordinates.  A chain
of open orthants stabilizes but the limit is not finitely generated.
Examples~\ref{e:limitconevschain-beginning}--\ref{e:no-global-fg-Cone2} showcase these effects.

What remains of the theory is the equivalence of equivariant finite
generation in the limit (the \emph{global} situation) and
stabilization of the chain (the \emph{local} one) under appropriate assumptions.
For example, the \emph{local-global principle} for cones in
Corollary~\ref{c:globallocalcones} states that the limit $C_{\infty}$
of a $\Sym$-invariant family of cones is equivariantly finitely
generated if and only if the family $(C_{n})_{n}$ stabilizes and
eventually all $C_{n}$ are finitely generated.  This is also
equivalent to an eventual saturation condition
$C_{n} = C_{\infty} \cap \RR^{n}_{\ge 0}$ together with finite
generation of $C_{n}$ by rays of bounded support.
Section~\ref{section-cones-monoids} also contains a parallel
development for monoids.

The similarity between the results for cones and monoids points at a
generalization and unification, which we undertake in
Sections~\ref{sec:chain-set}--\ref{section-sets}.  In
Section~\ref{sec:chain-set} we abstract taking the ideal, cone, or
monoid to any closure operation and the action of $\Sym(n)$ to any
system of maps that maps objects of the chain into the later object.
The generalization has many advantages.  It allows to formulate a
general local-global principle, i.e.\ the exact conditions under which
the equivalence of finite generation up to symmetry and stabilization
hold.  This is our central Theorem~\ref{th:fg-and-stable}.
Specializing the maps to $\Sym$ and $\Inc$ (the monoid of increasing
maps) yields Theorems~\ref{thm:setFG} and~\ref{t:local-global-Inc}.
Further specializations to chains of polyhedral cones and monoids
under $\Sym$ and $\Inc$ follow in Section~\ref{section-cones-monoids}.
In Section~\ref{sec:ideals} we return to equivariant Noetherianity
in polynomial rings and use our results
to fill a gap in the proof that $\Inc$-invariant chains of ideals
stabilize, a fact that is used in the literature, e.g.\
in~\cite{NG18, nagel2017equivariant}.

Our work could be phrased in the framework of FI-modules by Church,
Ellenberg, and Farb (see, e.g., \cite{ChElFa14, FI, ChEl17}).  This
concerns, among other things, equivariant chains of modules using
symmetric groups.  The fundamental difference between our developments
in all but the last section is that the elements of our chains are
subsets of finite-dimensional vector spaces which rarely appear with
module structures, but rather take into account various closure
operations.  In the broader context of representation stability and
twisted commutative algebras, there is, in particular, the fundamental
work of Sam and Snowden (see, e.g., \cite{GL-sam-snowden, SaSn17}).
Unlike in our situation, Noetherianity abounds and is a central tool
in this theory.  In summary, while it would be possible to generalize
our results in Section~\ref{section-sets} and, based
on~\cite{nagel2019FIOI}, phrase them in the language of FI-modules we
chose a direct approach that is most suitable for applications like
the algebraic and geometric situations in
Sections~\ref{section-cones-monoids} and~\ref{sec:ideals}.

Finite generation is essential for computation.  To make our results
effective, more research is needed on the concrete stability indices,
for which very little is known.  On the side of polyhedral geometry,
there is software for dual description conversion modulo
symmetry~\cite{symmetric-poly}, but computer algebra for symmetric
ideals or monoids is in its infancy.

\section{Chains of sets}
\label{sec:chain-set}

We begin with a general framework of chains of sets and closure
operations.  Our goal is to be able to transfer properties from such
chains of sets to their union (or limit) and back.

Throughout this section $S_{\infty}$ is any set and $\S = (S_{n})_{n\ge 1}$
is an increasing chain of subsets
\begin{equation}
	\label{eq:chain}
	S_1\subseteq S_2\subseteq\cdots\subseteq S_n\subseteq\cdots\ \text{ such that } S_{\infty}=\bigcup_{n\ge 1} S_n.
\end{equation}
The set $S_{\infty}$ has its notation for consistency with the limits
below.  One could think of $S_{\infty}$ as an ambient set and $\S$ as
an ambient chain in which the chains of interest live.  A \emph{chain
	of sets} $\A=(A_{n})_{n\geq 1}$ with respect to $\S$ is an
increasing chain
\[
A_1\subseteq A_2\subseteq\cdots\subseteq A_n\subseteq\cdots
\]
with $A_n\subseteq S_n$ for all $n\ge 1.$ The \emph{limit set} of $\A$ is
\[
A_{\infty} = \bigcup_{n \geq 1} A_{n} \subseteq S_{\infty}.
\]
A chain of sets $\A$ is \emph{saturated} (respectively,
\emph{eventually saturated}) if
\[
A_n=A_{\infty}\cap S_n \ \text{ for all $n\ge1$ (respectively, for
	all $n\gg 0$)}.
\]
Here and in the following ``for all $n\gg 0$'' means that there exists
some $N\in\NN$ such that the property holds for all $n>N$.

Any chain $\A$ with limit set $A_{\infty}$ has a \emph{saturation}
that is a chain $\bar\A=(\ol A_{n})_{n\geq 1}$ defined by
\[
\ol A_{n} \defas A_{\infty}\cap S_n \ \text{ for all $n\ge1$}.
\]
Evidently, $\ol\A$ is the only saturated chain with limit
set~$A_{\infty}$.

The key objects of this paper are chains of sets that possess two
additional structures: First, each set is closed with respect to a
closure operation, and second, the chains are invariant under a group
or monoid action.  In the following we describe these two structures
as well as their compatibility.  Our notion of closure follows the
idea of E.H.\ Moore~\cite{moore1910} (although it differs slightly).
Let $\P(X)$ denote the power set of a set $X$.

\begin{definition}
	\label{d:sets-closure}
	A \emph{closure operation} on a set $X$ is a map
	$\cl\colon \P(X) \to \P(X)$ such that
	\begin{enumerate}
		\item
		$A\subseteq A^{\cl}$ for all $A\in\P(X)$.
		\item
		$A^{\cl}=(A^{\cl})^{\cl}$ for all $A\in\P(X)$.
		\item
		If $A,B\in \P(X)$ with $A\subseteq B$, then $A^{\cl} \subseteq B^{\cl}$.
	\end{enumerate}
	A set $A\in\P(X)$ is \emph{$\cl$-closed} if $A^{\cl}=A$.
\end{definition}

Let $\cl_\infty$ be a closure operation on $S_{\infty}$ and $\cl_n$ a
closure operation on $S_n$ for all $n\ge 1$.  We call $\cl_\infty$ a
\emph{global} closure operation and $\ccl=(\cl_n)_{n\geq 1}$ a chain
of \emph{local} closure operations.

\begin{definition}
	\label{d:consistency}
	A global closure operation $\cl_\infty$ is \emph{consistent} with a
	chain $\ccl$ of local closure operations (or $(\ccl, \cl_\infty)$ is
	a \emph{consistent system} of closure operations) if
	\[
	(A \cap S_n)^{\cl_n} = A^{\cl_{\infty}}\cap S_n \quad \text{ for all $n\ge 1$ and $A\subseteq S_{\infty}$}.
	\]
\end{definition}

\begin{remark}
	\label{r:local-consistency}
	Definition~\ref{d:consistency} formulates a \emph{local-global}
	consistency of the closure operations.  This implies the following
	\emph{local-local} consistency:
	\[
	(A_n \cap S_m)^{\cl_m} = A_n^{\cl_n}\cap S_m
	\quad \text{ for all $n\ge m\ge 1$ and $A_n\subseteq S_n$}.
	\]
	Indeed, it follows from Definition~\ref{d:consistency} that
	$A_n^{\cl_n}=(A_n \cap S_n)^{\cl_n} = A_n^{\cl_{\infty}}\cap S_n$.
	Thus,
	\[
	A_n^{\cl_n}\cap S_m=A_n^{\cl_{\infty}}\cap S_n\cap S_m=A_n^{\cl_{\infty}}\cap S_m=(A_n \cap S_m)^{\cl_m}.
	\]
	On the other hand, local-local consistency does not imply  local-global consistency. To see this, one can simply take $\cl_{\infty}$ to be the \emph{trivial} closure operation (i.e. $A^{\cl_{\infty}}=S_{\infty}$ for any $A\subseteq S_{\infty}$) and $\ccl$ the chain of identity closures considered in the next example.
\end{remark}

\begin{example}
	\label{e:closureop}
	The following are some closure operations that appear here.
	\begin{enumerate}[(i)]
		\item Letting $A^{\idcl_n}=A$ for $A\subseteq S_n$ yields the
		\emph{chain of identity closures} $(\idcl_n)_{n\geq 1}$.  Obviously, this chain satisfies the local-local consistency described above.
		\item Taking conical hulls $\cncl_n(A)=\cone(A)$ for $A\in\P(\RR^n)$
		induces the \emph{chain of conical closures}
		$\ccn=(\cncl_n)_{n\geq 1}$. See Section~\ref{section-cones-monoids} for details.
		\item Taking monoid closures $\mndcl_n(A)=\Mon(A)$
		for $A\in\P(\RR^n)$ induces the \emph{chain of monoid closures}
		$\cmnd=(\mndcl_n)_{n\geq 1}$. See Section~\ref{section-cones-monoids} for details.
		\item Let $K[X_{n}]$ be the polynomial ring from
		Section~\ref{sec:ambient-chain-examples}.  Taking ideal closures
		$\langle A \rangle_n$ for $A\in\P(K[X_n])$ induces the
		\emph{chain of ideal closures} $(\langle\cdot\rangle_n)_{n\geq 1}$. See Section~\ref{sec:ideals} for details.
	\end{enumerate}
\end{example}

In what follows we fix a global closure operation $\cl_\infty$ and a
chain $\ccl=(\cl_n)_{n\geq 1}$ of local closure operations.

\begin{definition}
	A chain of sets $\A=(A_{n})_{n\geq 1}$ is \emph{$\ccl$-closed}
	(respectively, \emph{eventually $\ccl$-closed}) if $A_n$ is
	$\cl_n$-closed for all $n\geq 1$ (respectively, for all $n\gg 0$).
\end{definition}

Given a $\ccl$-closed chain $\A$, one might ask whether the limit set
$A_\infty$ is $\cl_\infty$-closed, and vice versa, if the limit set
$A_\infty$ is $\cl_\infty$-closed, what can be said about the
chain~$\A$?  This type of local-global question is a frequent theme
here.  A partial but quite general answer is the following.

\begin{lemma}
	\label{l:closed-limit}
	Let $(\ccl, \cl_\infty)$ be a consistent system and
	$\A=(A_{n})_{n\geq 1}$ a $\ccl$-closed chain with limit
	set~$A_\infty$ and saturation $\ol\A$.  Then the following hold:
	\begin{enumerate}
		\item $A_\infty$ is $\cl_\infty$-closed if $\ol\A$ is
		eventually $\ccl$-closed or $\A$ is eventually saturated.
		\item If $A_\infty$ is $\cl_\infty$-closed, then $\ol\A$ is
		$\ccl$-closed.
	\end{enumerate}
\end{lemma}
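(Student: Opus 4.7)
The plan is to rely on the consistency identity $(A\cap S_n)^{\cl_n}=A^{\cl_\infty}\cap S_n$ together with the fact that $A_\infty^{\cl_\infty}\subseteq S_\infty=\bigcup_n S_n$; the latter lets me trap any element of $A_\infty^{\cl_\infty}$ inside some $S_n$ with $n$ as large as I need. Once those two ingredients are in hand, both parts reduce to a very short calculation.

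Part (ii) is the shorter half. I would apply the consistency identity to the set $A=A_\infty$: the hypothesis $A_\infty^{\cl_\infty}=A_\infty$ gives
\[
(\ol A_n)^{\cl_n}=(A_\infty\cap S_n)^{\cl_n}=A_\infty^{\cl_\infty}\cap S_n=A_\infty\cap S_n=\ol A_n,
\]
so every $\ol A_n$ is $\cl_n$-closed. Before attacking part (i) I would first observe that its two alternative hypotheses can be merged into one: if $\A$ is eventually saturated then $\ol A_n=A_n$ for $n\gg 0$, and this set is $\cl_n$-closed by the standing assumption that $\A$ is $\ccl$-closed; hence $\ol\A$ is automatically eventually $\ccl$-closed, and it suffices to prove part (i) under that hypothesis alone.

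For this remaining case I would establish the nontrivial inclusion $A_\infty^{\cl_\infty}\subseteq A_\infty$ (the reverse inclusion being a closure axiom) by picking $x\in A_\infty^{\cl_\infty}$, using $A_\infty^{\cl_\infty}\subseteq S_\infty$ to choose $n$ with $x\in S_n$, and enlarging $n$ if necessary so that $\ol A_n$ is $\cl_n$-closed. Consistency then yields
\[
x\in A_\infty^{\cl_\infty}\cap S_n=(A_\infty\cap S_n)^{\cl_n}=(\ol A_n)^{\cl_n}=\ol A_n\subseteq A_\infty,
\]
as required. The whole argument is a direct unpacking of Definition~\ref{d:consistency}, so I expect no real obstacle; the only mild subtlety is noticing that eventual saturation together with $\ccl$-closedness of $\A$ already forces eventual $\ccl$-closedness of $\ol\A$, which collapses the disjunction in part (i).
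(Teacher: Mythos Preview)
Your proof is correct and follows essentially the same approach as the paper: both use the consistency identity $(A_\infty\cap S_n)^{\cl_n}=A_\infty^{\cl_\infty}\cap S_n$ together with $S_\infty=\bigcup_n S_n$, and both reduce the eventual-saturation case of (i) to the eventual-$\ccl$-closedness case via the observation that $\ol A_n=A_n$ for $n\gg0$. The only cosmetic difference is that the paper argues with a chain of set equalities over a union $\bigcup_{n\ge m}$, while you trace a single element $x$; the content is identical.
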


\begin{proof}
	(i) Assume first that $\ol\A$ is eventually $\ccl$-closed. Then
	there exists $m\ge 1$ such that
	\[
	(A_\infty\cap S_n)^{\cl_n}= A_\infty\cap S_n\ \text{ for all } n\ge m.
	\]
	Since $S_\infty=\bigcup_{n\geq 1} S_n=\bigcup_{n\ge m} S_n$, it
	follows from Definition~\ref{d:consistency} that
	\begin{equation*}
		A_\infty^{\cl_{\infty}}
		=
		A_\infty^{\cl_{\infty}}\cap S_\infty
		=
		\bigcup_{n\ge m} (A_\infty^{\cl_{\infty}} \cap S_n)
		=
		\bigcup_{n\ge m} (A_\infty \cap S_n)^{\cl_n}
		=
		\bigcup_{n\ge m} (A_\infty \cap S_n)
		=
		A_\infty.
	\end{equation*}
	Hence, $A_\infty$ is $\cl_{\infty}$-closed.  If $\A$ is eventually
	saturated, then $A_\infty\cap S_n=A_n$ is $\cl_n$-closed for all
	$n\gg0$. Thus, the chain $\ol\A$ is eventually $\ccl$-closed and we
	conclude by the first case.

	(ii) If $A_\infty$ is $\cl_\infty$-closed, then by
	Definition~\ref{d:consistency},
	\begin{equation*}
		A_\infty\cap S_n
		= A_\infty^{\cl_{\infty}}\cap S_n
		= (A_\infty \cap S_n)^{\cl_n}
		\ \text{ for all $n\ge 1$}.
	\end{equation*}
	This means that the chain $\ol\A$ is $\ccl$-closed.
\end{proof}

The following lemma is immediate given Lemma~\ref{l:closed-limit}.
\begin{lemma}
	Under the assumptions of Lemma~\ref{l:closed-limit} the following
	are equivalent:
	\begin{enumerate}
		\item
		$A_\infty$ is $\cl_{\infty}$-closed;
		\item
		$\ol\A$ is eventually $\ccl$-closed;
		\item
		$\ol\A$ is $\ccl$-closed.
	\end{enumerate}
	Moreover, these equivalent statements hold if  $\A$ is eventually saturated.
\end{lemma}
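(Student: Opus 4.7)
The proof is essentially a bookkeeping exercise that packages the two implications from Lemma~\ref{l:closed-limit} into a three-way equivalence. The plan is to close a cycle of implications (iii) $\Rightarrow$ (ii) $\Rightarrow$ (i) $\Rightarrow$ (iii), and then derive the ``moreover'' clause as a direct consequence.

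First I would observe that (iii) $\Rightarrow$ (ii) is immediate from the definition: a chain that is $\ccl$-closed (i.e., $\ol A_n$ is $\cl_n$-closed for \emph{all} $n \ge 1$) is in particular eventually $\ccl$-closed. Next, for (ii) $\Rightarrow$ (i), I would quote the first case of part~(i) of Lemma~\ref{l:closed-limit}: if $\ol\A$ is eventually $\ccl$-closed, then $A_\infty$ is $\cl_\infty$-closed. Finally, (i) $\Rightarrow$ (iii) is precisely part~(ii) of Lemma~\ref{l:closed-limit}, which gives that $A_\infty$ being $\cl_\infty$-closed forces $\ol\A$ to be (fully) $\ccl$-closed.

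For the final assertion, I would appeal to the second case of part~(i) of Lemma~\ref{l:closed-limit}: eventual saturation of $\A$ directly yields $\cl_\infty$-closedness of $A_\infty$, which is statement (i); the already-established equivalence then propagates this to (ii) and (iii).

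There is no real obstacle: every step is a direct invocation of Lemma~\ref{l:closed-limit} or a tautological weakening (``for all $n$'' implies ``for all $n \gg 0$''). The only thing to be careful about is to respect the standing hypothesis carried over from Lemma~\ref{l:closed-limit} that $\A$ itself is $\ccl$-closed and that $(\ccl,\cl_\infty)$ is a consistent system; both are part of ``the assumptions of Lemma~\ref{l:closed-limit}'' and so may be used without comment.
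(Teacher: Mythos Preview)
Your proof is correct and is exactly the unpacking that the paper has in mind; the paper itself simply states that the lemma ``is immediate given Lemma~\ref{l:closed-limit}'' without further argument. Your cycle (iii) $\Rightarrow$ (ii) $\Rightarrow$ (i) $\Rightarrow$ (iii) and the use of the second case of Lemma~\ref{l:closed-limit}(i) for the ``moreover'' clause are precisely the implicit reasoning.
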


Now that the chains are described, we focus on group or monoid actions
implementing symmetries within the chain.  In the abstract setting we
begin with just maps and specialize to actions later.  Let $\Pi$ be a
set of maps $S_{\infty}\to S_{\infty}$, and for $m\le n$ let
$\Pi_{m,n}$ be a set of maps $S_m \to S_n$.  We use the adjectives
\emph{global} to refer to the maps in $\Pi$ and \emph{local} for those
in the sets $\Pi_{m,n}$.  Let $\lpi$ denote the family
$\{\Pi_{m,n}\}_{m\le n}$. We call $(\lpi,\Pi)$ a \emph{system of
	maps}.  For $A_m\subseteq S_m$ and $A_{\infty}\subseteq S_{\infty}$
set
\begin{align*}
	\Pi_{m,n}(A_m)&=\{\pi(v)\mid \pi\in\Pi_{m,n},\ v\in A_m\}\subseteq S_n,\\
	\Pi(A_{\infty})&=\{\pi(v)\mid \pi\in\Pi,\ v\in A_{\infty}\}\subseteq S_{\infty}.
\end{align*}
In most cases of interest here $\Pi_{m,n}(A_m)$ is a finite set
whenever $A_m$ is a finite subset of~$S_m$.

\begin{definition}
	\label{d:locally-finite}
	A family $\lpi=\{\Pi_{m,n}\}_{m\le n}$ is \emph{locally
          finite} if for every $n\ge m\ge 1$ and every finite subset
        $A_m\subseteq S_m$ the set $\Pi_{m,n}(A_m)$ is finite.
\end{definition}

It is worth mentioning that a family $\lpi=\{\Pi_{m,n}\}_{m\le n}$ can
be locally finite even when each $\Pi_{m,n}$ is infinite; see
Lemma~\ref{l:Sym-Inc-locally-finite}.

Later, when $\Pi$ is a group or monoid, each set $\Pi_{m,n}$ can be
derived from $\Pi$, and moreover, the system $(\lpi,\Pi)$ is (weakly)
consistent in the sense of the next definition. In such cases, it is customary to use $\Pi$ as a representative for the family~$\lpi$.

\begin{definition}
	\label{d:Pi-consistency}
	A system of maps $(\lpi, \Pi)$ is
	\begin{enumerate}
		\item
		\emph{weakly consistent} if
		\begin{equation*}
			\Pi(A_{m})=\bigcup_{n\geq m}\Pi_{m,n}(A_{m})\ \text{ for all $m\ge 1$ and $A_m\subseteq S_m$};
		\end{equation*}
		\item
		\emph{consistent} if
		\begin{equation*}
			\Pi_{m,n}(A_m)=
			\Pi(A_{m})\cap S_n\ \text{ for all $n\ge m\ge 1$ and $A_m\subseteq S_m$}.
		\end{equation*}
	\end{enumerate}
\end{definition}

\vspace{8pt}  
\begin{remark}\label{r:consistency}\leavevmode
	\begin{enumerate}
		\item Consistency is stronger than weak consistency. Indeed, if
		$(\lpi,\Pi)$ is consistent, then for all $n\ge m\ge 1$ and
		$A_m\subseteq S_m$ one has
		\[
		\Pi(A_{m})
		=\Pi(A_{m})\cap S_\infty
		=\Pi(A_{m})\cap\Big(\bigcup_{n\geq m} S_n\Big)
		=\bigcup_{n\geq m}(\Pi(A_{m})\cap S_n)
		=\bigcup_{n\geq m}\Pi_{m,n}(A_{m}).
		\]
		\item Weak consistency is frequently applied in the form of the
		following easy consequence: For all $n\ge m$ and
		$A_{m} \subseteq S_{m}$
		it holds that $\Pi_{m,n} (A_{m}) \subseteq \Pi(A_{m})$.
		\item The consistency of $(\lpi, \Pi$) is a \emph{local-global}
		consistency.  It implies the following \emph{local-local}
		consistency:
		\[
		\Pi_{m,n}(A_{m})
		=\Pi_{k,n}(A_{m})\quad \text{ for all $n\ge k\ge m$ and
			$A_m\subseteq S_m\subseteq S_k$},
		\]
		since both sides are equal to $\Pi(A_{m})\cap S_n$.
	\end{enumerate}
\end{remark}

We are now ready to introduce the main object of study in this paper.

\begin{definition}
	\label{d:invariant}
	Let $(\lpi,\Pi)$ be a system of maps and let
	$\ccl=(\cl_n)_{n\geq 1}$ be a chain of closure operations.
	\begin{enumerate}
		\item
		A subset $A_{\infty}\subseteq S_{\infty}$ is \emph{$\Pi$-invariant} if
		\[
		\Pi(A_{\infty})\subseteq A_{\infty}.
		\]
		\item
		A $\ccl$-closed chain $\A=(A_{n})_{n\geq 1}$ is  \emph{$\lpi$-invariant} if
		\begin{equation*}
			(\Pi_{m,n} (A_{m}))^{\cl_n} \subseteq A_{n}
			\text{ whenever }  n\geq m.
		\end{equation*}
		A $\lpi$-invariant chain \emph{stabilizes} if there exists some
		integer $r\geq 1$ such that for all $n\geq m \geq r$ one has
		$(\Pi_{m,n} (A_{m}))^{\cl_n}= A_{n}$.  The smallest such $r$ is
		the \emph{$\lpi$-stability index} (or \emph{stability index}) of
		$\A$ with respect to $\ccl$ and denoted by
		$\ind^{\lpi}_{\ccl}(\A)$, or~$\ind_{\ccl}(\A)$, or even $\ind(\A)$
		if there is no danger of confusion.
	\end{enumerate}
\end{definition}

Let $\A=(A_{n})_{n\geq 1}$ be a chain of sets.  We would like to
describe properties of $\A$ (i.e.~\emph{local} properties) that can be
transferred to corresponding properties of the limit~$A_{\infty}$
(i.e.~\emph{global} properties) and vice versa.  After
Lemma~\ref{l:closed-limit}, here is the next example of such a
property.

\begin{lemma}
	\label{l:invariant}
	Let $(\lpi,\Pi)$ be a weakly consistent system of maps and
	$\A=(A_{n})_{n\geq 1}$ be a $\lpi$-invariant $\ccl$-closed chain
	with limit set~$A_{\infty}$.  Then the following hold:
	\begin{enumerate}
		\item $A_{\infty}$ is $\Pi$-invariant.
		\item If $A_{\infty}$ is $\cl_\infty$-closed, then $\ol\A$ is a
		$\lpi$-invariant $\ccl$-closed chain.
	\end{enumerate}
\end{lemma}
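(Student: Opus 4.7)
The plan is to prove the two parts separately by unwinding the definitions. For~(i), I would take an arbitrary element $\pi(v)$ with $\pi\in\Pi$ and $v\in A_{\infty}$ and trace it back to a local stage where the $\lpi$-invariance of $\A$ can be applied. Concretely, since $A_{\infty}=\bigcup_{n}A_{n}$, pick $m$ with $v\in A_{m}$, so that $\pi(v)\in\Pi(A_{m})$. Weak consistency of $(\lpi,\Pi)$ gives $\Pi(A_{m})=\bigcup_{n\ge m}\Pi_{m,n}(A_{m})$, so there exists $n\ge m$ with $\pi(v)\in\Pi_{m,n}(A_{m})$. The extensivity axiom of $\cl_{n}$ then places $\pi(v)$ into $(\Pi_{m,n}(A_{m}))^{\cl_{n}}$, which by $\lpi$-invariance of~$\A$ is contained in~$A_{n}\subseteq A_{\infty}$.

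For~(ii), assume $A_{\infty}$ is $\cl_{\infty}$-closed. The chain $\ol\A$ is already $\ccl$-closed by Lemma~\ref{l:closed-limit}(ii), so it suffices to verify $\lpi$-invariance, i.e.\ $(\Pi_{m,n}(\ol A_{m}))^{\cl_{n}}\subseteq \ol A_{n}$ for $n\ge m$. I would first show the stronger-looking fact $\Pi_{m,n}(\ol A_{m})\subseteq \ol A_{n}$: given $v\in\ol A_{m}=A_{\infty}\cap S_{m}$ and $\pi\in\Pi_{m,n}$, Remark~\ref{r:consistency}(ii) yields $\pi(v)\in\Pi_{m,n}(A_{m}\cap A_{\infty})\subseteq\Pi(A_{\infty})$, which is contained in $A_{\infty}$ by part~(i); since also $\pi(v)\in S_{n}$, we conclude $\pi(v)\in A_{\infty}\cap S_{n}=\ol A_{n}$. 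Applying $\cl_{n}$, monotonicity of the closure together with the $\cl_{n}$-closedness of $\ol A_{n}$ gives $(\Pi_{m,n}(\ol A_{m}))^{\cl_{n}}\subseteq \ol A_{n}^{\cl_{n}}=\ol A_{n}$, as desired.

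Both arguments are direct chases through the definitions and no substantial obstacle is expected. The only subtle point is that part~(ii) relies on Lemma~\ref{l:closed-limit}(ii), which uses the local-global consistency of the closure system $(\ccl,\cl_{\infty})$; this is understood as a running hypothesis in this section. Note also that weak consistency is used in part~(i) in its full form (to exhibit a specific $n$), whereas in part~(ii) only the weaker containment $\Pi_{m,n}(A_{m})\subseteq\Pi(A_{m})$ from Remark~\ref{r:consistency}(ii) is needed.
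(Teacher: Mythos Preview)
Your argument is correct and follows essentially the same route as the paper's proof: the paper presents both parts as set-theoretic chains of inclusions (e.g.\ $\Pi(A_\infty)=\bigcup_{m}\Pi(A_m)=\bigcup_{n\ge m}\Pi_{m,n}(A_m)\subseteq\bigcup_n A_n$), while you unfold the same computation element-wise. One small typo: in part~(ii) you write $\Pi_{m,n}(A_{m}\cap A_{\infty})$ where you mean $\Pi_{m,n}(A_{\infty}\cap S_{m})$; and your closing remark about the implicit reliance on consistency of $(\ccl,\cl_\infty)$ via Lemma~\ref{l:closed-limit}(ii) is a correct and useful observation.
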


\begin{proof}
	(i) It follows from Definitions~\ref{d:Pi-consistency}(i) and
	\ref{d:invariant}(ii) that
	\begin{align*}
		\Pi(A_{\infty})
		&=\Pi\Big(\bigcup_{m \geq 1} A_{m}\Big)
		=\bigcup_{m \geq 1} \Pi(A_{m})
		=\bigcup_{n\ge m \geq 1} \Pi_{m,n}(A_{m})
		\subseteq \bigcup_{n \geq 1} A_{n}
		=A_{\infty}.
	\end{align*}
	Thus, $A_{\infty}$  is $\Pi$-invariant.
	
	(ii) By Lemma~\ref{l:closed-limit}(ii), $\ol\A$ is $\ccl$-closed. So
	it remains to show that $\ol\A$ is $\lpi$-invariant. Let $m,n\in\NN$
	with $n\ge m$. According to Remark~\ref{r:consistency}(ii) it
	holds that
	\[
	\Pi_{m,n}(A_\infty\cap S_{m})
	\subseteq \Pi(A_\infty\cap S_{m})
	\subseteq \Pi(A_\infty)
	\subseteq A_{\infty},
	\]
	where the last inclusion follows from (i). On the other hand, one has
	$\Pi_{m,n}(A_\infty\cap S_{m})\subseteq S_n$ by definition. Hence,
	$ \Pi_{m,n}(A_\infty\cap S_{m})\subseteq A_{\infty} \cap S_n, $ which
	implies
	\[
	(\Pi_{m,n}(A_\infty\cap S_{m}))^{\cl_n}\subseteq A_{\infty} \cap S_n
	\]
	since $\ol\A$ is $\ccl$-closed. Therefore,  $\ol\A$ is $\lpi$-invariant.
\end{proof}

The main focus of this work is to explore under which conditions local
finite generation implies global finite generation and vice versa.  We
first give a definition of this property in local and global
situations.

\begin{definition}
	\label{d:finite}
	Let $(\lpi,\Pi)$ be a system of maps and $(\ccl,\cl_\infty)$ a
	system of closure operations.  Let $\A=(A_{n})_{n\geq 1}$ be a
	$\ccl$-closed chain and $A\subseteq S_{\infty}$ a
	$\cl_\infty$-closed set.
	\begin{enumerate}
		\item (local) $\A$ is \emph{finitely generated} (respectively,
		\emph{eventually finitely generated}) if for all $n\ge1$
		(respectively, for all $n\gg0$) there is a finite subset
		$G_n\subseteq A_{n}$ such that $ A_n= G_n^{\cl_{n}}.$
		\item (global) $A$ is \emph{$\Pi$-equivariantly finitely generated}
		if there exists a finite subset $G\subseteq A$ such that
		$ A=\Pi(G)^{\cl_{\infty}}.$
	\end{enumerate}
\end{definition}

To discuss the relation between local and global finite generation we
introduce a crucial \emph{local-local compatibility} condition.
\begin{definition}
	\label{d:compatibility}
	A chain $\ccl=(\cl_n)_{n\ge 1}$ of closure operations and a family
	of maps $\lpi=\{\Pi_{m,n}\}_{m\le n}$ are \emph{compatible} if
	\[
	\Pi_{m,n}(A_m^{\cl_m}) \subseteq (\Pi_{m,n}(A_m))^{\cl_n}\ \text{ for all $n\geq m\ge 1$ and $A_m\subseteq S_m$}.
	\]
\end{definition}

\begin{lemma}
	\label{l:compatibility}
	If $\ccl$ and $\lpi$ are compatible, then
	\[
	(\Pi_{m,n}(A_m^{\cl_m}))^{\cl_n}=(\Pi_{m,n}(A_m))^{\cl_n}\ \text{ for all $n\geq m\ge 1$ and $A_m\subseteq S_m$}.
	\]
\end{lemma}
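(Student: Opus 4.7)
The plan is to prove the two inclusions separately, using only the three defining properties of a closure operation from Definition~\ref{d:sets-closure} together with the compatibility hypothesis.

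For the inclusion $(\Pi_{m,n}(A_m))^{\cl_n} \subseteq (\Pi_{m,n}(A_m^{\cl_m}))^{\cl_n}$, I would start from the extensive property $A_m \subseteq A_m^{\cl_m}$, which immediately yields $\Pi_{m,n}(A_m) \subseteq \Pi_{m,n}(A_m^{\cl_m})$ just by how $\Pi_{m,n}$ acts on sets. Applying monotonicity of $\cl_n$ then gives the desired containment. This direction requires no compatibility.

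For the reverse inclusion, which is where compatibility enters, I would invoke Definition~\ref{d:compatibility} to obtain $\Pi_{m,n}(A_m^{\cl_m}) \subseteq (\Pi_{m,n}(A_m))^{\cl_n}$. Applying $\cl_n$ to both sides and using monotonicity yields
\[
(\Pi_{m,n}(A_m^{\cl_m}))^{\cl_n} \subseteq \bigl((\Pi_{m,n}(A_m))^{\cl_n}\bigr)^{\cl_n},
\]
and the idempotence property $(B^{\cl_n})^{\cl_n} = B^{\cl_n}$ collapses the right-hand side to $(\Pi_{m,n}(A_m))^{\cl_n}$.

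There is no real obstacle here: the lemma is essentially a bookkeeping consequence of the closure axioms and should occupy only a few lines. The only subtlety worth flagging is that one direction is trivial (requiring only extensivity and monotonicity), while the other is precisely the substantive content of compatibility combined with idempotence; the statement thus shows that the asymmetric inclusion in Definition~\ref{d:compatibility} is equivalent to the symmetric-looking identity of the lemma once $\cl_n$ is applied.
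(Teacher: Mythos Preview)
Your proposal is correct and follows essentially the same approach as the paper: one inclusion is immediate from $A_m\subseteq A_m^{\cl_m}$ and monotonicity, while the other uses compatibility followed by idempotence of $\cl_n$. The paper's proof is just a terser rendering of the same two steps.
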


\begin{proof}
	From Definition~\ref{d:compatibility} it follows that
	\begin{align*}
		(\Pi_{m,n}(A_m^{\cl_m}))^{\cl_n}
		&\subseteq ((\Pi_{m,n}(A_m))^{\cl_n})^{\cl_n}
		=(\Pi_{m,n}(A_m))^{\cl_n}.
	\end{align*}
	The reverse inclusion is obvious since $A_m\subseteq A_m^{\cl_m}$.
\end{proof}

We are now ready to describe situations where the finite generation
property of a chain is inherited by its limit, and vice versa.

\begin{theorem}
	\label{th:fg-and-stable}
	Let $(\ccl,\cl_\infty)$ be a consistent system of closure operations
	and $(\lpi,\Pi)$ a system of maps.  Let $\A=(A_n)_{n\geq 1}$ be a
	$\lpi$-invariant, $\ccl$-closed chain with limit set~$A_\infty$.
	Consider the statements:
	\begin{enumerate}[(a)]
		\item (local) $\A$ stabilizes and is eventually finitely generated.
		\item (global) $A_\infty$ is $\Pi$-equivariantly finitely generated.
	\end{enumerate}
	The following hold:
	\begin{enumerate}
		\item If $A_\infty$ is $\cl_{\infty}$-closed, $(\lpi,\Pi)$ is weakly
		consistent, and $\ccl$ and $\lpi$ are compatible, then (a)
		implies~(b).
		\item If $\lpi$ is locally finite and $(\lpi,\Pi)$ is consistent, then
		(b) implies~(a).  Moreover, in this case $\A$ is eventually
		saturated.
	\end{enumerate}
\end{theorem}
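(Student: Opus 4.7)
The plan is to treat the two implications separately. In both directions the central device is Lemma~\ref{l:compatibility} for moving the closure inside a $\Pi_{m,n}$-image, together with the local-global consistency $(A\cap S_n)^{\cl_n} = A^{\cl_\infty}\cap S_n$ for closures and, in part~(ii), the local-global consistency $\Pi_{m,n}(A_m) = \Pi(A_m)\cap S_n$ for maps.

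\textbf{Part (i).} I would first reduce to a single index. Since $\A$ stabilizes and is eventually finitely generated, pick $r$ large enough that both properties hold from $r$ onwards, and write $A_r = G^{\cl_r}$ for a finite $G\subseteq A_r$. Then for every $n\geq r$,
\[
A_n = (\Pi_{r,n}(A_r))^{\cl_n} = (\Pi_{r,n}(G^{\cl_r}))^{\cl_n} = (\Pi_{r,n}(G))^{\cl_n},
\]
using stability and Lemma~\ref{l:compatibility}. I claim $A_\infty = \Pi(G)^{\cl_\infty}$. For ``$\supseteq$'', since $G\subseteq A_\infty$ and $A_\infty$ is $\Pi$-invariant by Lemma~\ref{l:invariant}(i) and $\cl_\infty$-closed by hypothesis, we get $\Pi(G)^{\cl_\infty}\subseteq A_\infty$. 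For ``$\subseteq$'', given $x\in A_\infty$, choose $n\geq r$ with $x\in A_n$; by weak consistency $\Pi_{r,n}(G)\subseteq \Pi(G)\cap S_n$, so
\[
x\in (\Pi_{r,n}(G))^{\cl_n}\subseteq (\Pi(G)\cap S_n)^{\cl_n} = \Pi(G)^{\cl_\infty}\cap S_n\subseteq \Pi(G)^{\cl_\infty},
\]
where the middle equality is the consistency of $(\ccl,\cl_\infty)$ applied to $\Pi(G)\subseteq S_\infty$. Thus $G$ witnesses $\Pi$-equivariant finite generation of $A_\infty$.

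\textbf{Part (ii).} Assume $A_\infty = \Pi(G)^{\cl_\infty}$ with $G$ finite. Since $G\subseteq A_\infty = \bigcup_n A_n$ is finite, pick $r$ with $G\subseteq A_r$. For $n\geq r$, consistency of $(\lpi,\Pi)$ gives $\Pi_{r,n}(G) = \Pi(G)\cap S_n$, and consistency of $(\ccl,\cl_\infty)$ then gives
\[
A_\infty\cap S_n = \Pi(G)^{\cl_\infty}\cap S_n = (\Pi(G)\cap S_n)^{\cl_n} = (\Pi_{r,n}(G))^{\cl_n}.
\]
On the other hand $(\Pi_{r,n}(G))^{\cl_n}\subseteq (\Pi_{r,n}(A_r))^{\cl_n}\subseteq A_n$ by $\lpi$-invariance, and trivially $A_n\subseteq A_\infty\cap S_n$. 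Hence all three sets coincide, which simultaneously yields eventual saturation and eventual finite generation (with generators $\Pi_{r,n}(G)$, a finite set by local finiteness of $\lpi$). For stabilization, fix $n\geq m\geq r$ and any $A_m$ in the chain; $\lpi$-invariance gives $(\Pi_{m,n}(A_m))^{\cl_n}\subseteq A_n$, and conversely, by the local-local consistency in Remark~\ref{r:consistency}(iii) applied to $G\subseteq S_r\subseteq S_m$,
\[
A_n = (\Pi_{r,n}(G))^{\cl_n} = (\Pi_{m,n}(G))^{\cl_n} \subseteq (\Pi_{m,n}(A_m))^{\cl_n},
\]
since $G\subseteq A_r\subseteq A_m$. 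Therefore $\A$ stabilizes from index~$r$.

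The main obstacle is bookkeeping: each direction requires chaining two distinct consistency identities and Lemma~\ref{l:compatibility} in precisely the right order, and one must take care that all hypotheses are really used where needed (weak consistency suffices for (i) only because one inclusion, not an equality, is required to pass from $\Pi_{r,n}(G)$ into $\Pi(G)\cap S_n$, whereas (ii) genuinely needs the equality provided by full consistency to identify $A_\infty\cap S_n$ with a closure of a finite set). The other delicate point is that finite generation of $A_n$ in (ii) is only \emph{eventual} precisely because $G$ need not lie in $A_n$ for small $n$; once $n\geq r$, the single finite set $\Pi_{r,n}(G)$ works and simultaneously forces saturation.
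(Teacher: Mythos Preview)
Your proof is correct and follows essentially the same approach as the paper's own proof: in both directions you chain the consistency of closures, the (weak) consistency of maps, and Lemma~\ref{l:compatibility} in the same order, arriving at the same key identity $A_n = (\Pi_{r,n}(G))^{\cl_n} = A_\infty\cap S_n$. You are in fact slightly more explicit than the paper in part~(ii), where you spell out (via Remark~\ref{r:consistency}(iii)) why $A_n = (\Pi_{r,n}(G))^{\cl_n}$ for the single index $r$ upgrades to $A_n = (\Pi_{m,n}(A_m))^{\cl_n}$ for all $m$ between $r$ and $n$; the paper leaves this step implicit.
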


\begin{proof}
	Assume first (a) and the assumptions of (i).  Since $\A$ stabilizes,
	its stability index $\ind(\A)$ is finite.  Since $\A$ is eventually
	finitely generated, we may choose an $m\geq \ind(\A)$ and a finite
	subset $G\subseteq A_{m}$ such that $A_m=G^{\cl_m}$.  So for $n\geq m$
	one obtains
	\begin{align*}
		A_n
		&=(\Pi_{m,n} (A_{m}))^{\cl_n}
		&&\text{(by Definition~\ref{d:invariant}(ii))}\\
		&=(\Pi_{m,n} (G^{\cl_m}))^{\cl_n}
		&&\text{(since $A_m=G^{\cl_m}$)}\\
		&=(\Pi_{m,n} (G))^{\cl_n}
		&&\text{(by Lemma~\ref{l:compatibility})}\\
		&=(\Pi_{m,n} (G)\cap S_n)^{\cl_n}
		&&\text{(since $\Pi_{m,n} (G)\subseteq S_n$)}\\
		&\subseteq (\Pi(G)\cap S_n)^{\cl_n}
		&&\text{(by Remark~\ref{r:consistency}(ii))}\\
		&=\Pi(G)^{\cl_{\infty}}\cap S_n
		&&\text{(by Definition~\ref{d:consistency})}.
	\end{align*}
	Since $A_n\subseteq A_m$ for $n\le m$ it follows that
	\begin{equation*}
		A_\infty
		=
		\bigcup_{n\geq 1} A_n
		=
		\bigcup_{n\geq m} A_n
		\;\subseteq\;
		\bigcup_{n\geq m} (\Pi(G)^{\cl_{\infty}}\cap S_n)
		=
		\Pi(G)^{\cl_{\infty}}.
	\end{equation*}
	On the other hand,
	$\Pi(G)^{\cl_{\infty}}\subseteq
	\Pi(A_\infty)^{\cl_{\infty}}\subseteq
	A_\infty^{\cl_{\infty}}=A_\infty$ by Lemma~\ref{l:invariant}(i)
	and the assumption that $A_\infty$ is $\cl_{\infty}$-closed.
	Hence, $A_\infty=\Pi(G)^{\cl_{\infty}}$ is $\Pi$-equivariantly
	finitely generated.
	
	\medskip Now assume (b) and the assumptions of (ii). Since
	$A_\infty$ is $\Pi$-equivariantly finitely generated, there exists
	a finite subset $G \subseteq A_\infty$ such that
	$A_\infty=\Pi(G)^{\cl_\infty}$.  Since $G$ is finite, we may
	assume $G\subseteq A_m$ for some large enough~$m$.  Thus, for
	$n\ge m$ one has
	\begin{align*}
		(\Pi_{m,n} (A_{m}))^{\cl_n}
		&\subseteq A_n\subseteq A_\infty\cap S_n\\
		&=\Pi(G)^{\cl_\infty}\cap S_n
		&&\text{(since $A_\infty=\Pi(G)^{\cl_\infty}$)}\\
		&=(\Pi(G)\cap S_n)^{\cl_n}
		&&\text{(by Definition~\ref{d:consistency})}\\
		&=(\Pi_{m,n} (G))^{\cl_n}
		&&\text{(by Definition~\ref{d:Pi-consistency}(ii))}\\
		&\subseteq (\Pi_{m,n} (A_{m}))^{\cl_n}
		&&\text{(since $G\subseteq A_m$)},
	\end{align*}
	hence equalities hold throughout.  That is,
	\[
	A_\infty\cap S_n=A_n=(\Pi_{m,n} (A_{m}))^{\cl_n}=(\Pi_{m,n} (G))^{\cl_n}.
	\]
	This shows that $\A$ stabilizes and is eventually saturated.
	Moreover, $\A$ is eventually finitely generated because $G$ is finite
	and $\lpi$ is locally finite.
\end{proof}

Now we formulate assumptions that guarantee equivalence of (a) and (b)
in Theorem~\ref{th:fg-and-stable}.

\begin{corollary}
	\label{c:fg-local-global}
	Let $(\ccl,\cl_\infty)$ and $(\lpi,\Pi)$ be consistent systems and
	$\lpi$ be locally finite and compatible with~$\ccl$.  If
	$\A=(A_n)_{n\geq 1}$ is a $\lpi$-invariant, $\ccl$-closed chain such
	that the limit $A_\infty$ is $\cl_{\infty}$-closed, then the following
	statements are equivalent:
	\begin{enumerate}[(a)]
		\item
		(local)
		$\A$ stabilizes and is eventually finitely generated;
		\item
		(global) $A_\infty$ is $\Pi$-equivariantly finitely generated.
	\end{enumerate}
	Moreover, if either of the above equivalent statements holds, then
	$\A$ is eventually saturated.
\end{corollary}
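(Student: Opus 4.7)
The corollary is essentially a direct consequence of Theorem~\ref{th:fg-and-stable}, so the plan is simply to verify that the hypotheses of both parts of that theorem are met and then invoke it.

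For the implication (a) $\Rightarrow$ (b), I would appeal to Theorem~\ref{th:fg-and-stable}(i). Its hypotheses require that $A_\infty$ be $\cl_\infty$-closed (given by assumption), that $\ccl$ and $\lpi$ be compatible (given by assumption), and that $(\lpi,\Pi)$ be weakly consistent. The last condition is not literally hypothesized here, but it follows from consistency of $(\lpi,\Pi)$ by Remark~\ref{r:consistency}(i). So Theorem~\ref{th:fg-and-stable}(i) applies and yields~(b).

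For the implication (b) $\Rightarrow$ (a), together with eventual saturation, I would apply Theorem~\ref{th:fg-and-stable}(ii). Its hypotheses require that $\lpi$ be locally finite (given) and that $(\lpi,\Pi)$ be consistent (given). So Theorem~\ref{th:fg-and-stable}(ii) applies directly, giving both that $\A$ stabilizes and is eventually finitely generated, and the ``moreover'' clause that $\A$ is eventually saturated.

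Combining the two implications gives the equivalence of (a) and (b), and the last statement about eventual saturation follows from the second implication (which is the only nontrivial direction producing it). There is no real obstacle in this proof: the whole content is that the hypotheses of the corollary collectively imply the hypotheses of both halves of Theorem~\ref{th:fg-and-stable}, the only subtlety being the brief observation that consistency of $(\lpi,\Pi)$ entails weak consistency via Remark~\ref{r:consistency}(i).
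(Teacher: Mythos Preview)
Your proposal is correct and matches the paper's approach exactly: the corollary is stated in the paper without proof, as an immediate consequence of Theorem~\ref{th:fg-and-stable}, and your verification that consistency of $(\lpi,\Pi)$ yields weak consistency via Remark~\ref{r:consistency}(i) is the only step requiring comment.
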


For later applications it is useful to relax the assumption that
$(\lpi,\Pi)$ is consistent in Theorem~\ref{th:fg-and-stable}(ii).
Analyzing the proof of this part, we see that its conclusion still
holds true if the consistency of $(\lpi,\Pi)$ is replaced by the
existence of a finite subset $G\subseteq A_\infty$ such that
\begin{equation}
	\label{eq:relaxation}
	A_\infty=\Pi(G)^{\cl_\infty}\quad \text{and}\quad
	\Pi(G)\cap S_n\subseteq \Pi_{m,n} (A_m)\ \text{ for all $n\ge m\gg0$}.
\end{equation}
Hence, we obtain the following.
\begin{proposition}
	\label{pr:relaxation}
	Let $(\ccl,\cl_\infty)$ be a consistent system of closure operations
	and $(\lpi,\Pi)$ be a system of maps.  Assume that $\lpi$ is locally
	finite.  Let $\A=(A_n)_{n\geq 1}$ be a $\lpi$-invariant,
	$\ccl$-closed chain with limit set~$A_\infty$.  If there exists a
	finite set $G\subseteq A_\infty$ such that
	condition~\eqref{eq:relaxation} is satisfied, then $\A$ stabilizes
	and is eventually finitely generated.
\end{proposition}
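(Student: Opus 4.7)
The plan is to reproduce the argument of Theorem~\ref{th:fg-and-stable}(ii) almost verbatim, substituting condition~\eqref{eq:relaxation} for the single use of consistency of $(\lpi,\Pi)$ in that proof. The statement is essentially a bookkeeping observation: what the proof of Theorem~\ref{th:fg-and-stable}(ii) \emph{actually} uses about $G$ is the two properties that together form~\eqref{eq:relaxation}.

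I would begin by choosing a fixed index $m$ large enough that both $G\subseteq A_m$ (possible because $G$ is finite and $A_\infty=\bigcup_k A_k$) and $\Pi(G)\cap S_n\subseteq\Pi_{m,n}(A_m)$ hold for every $n\ge m$. Then for each such $n$ I would assemble the same five-term chain as in the original proof, namely
\[
(\Pi_{m,n}(A_m))^{\cl_n}
\subseteq A_n
\subseteq A_\infty\cap S_n
=\Pi(G)^{\cl_\infty}\cap S_n
=(\Pi(G)\cap S_n)^{\cl_n}
\subseteq (\Pi_{m,n}(A_m))^{\cl_n}.
\]
The first containment is $\lpi$-invariance of $\A$, the first equality uses $A_\infty=\Pi(G)^{\cl_\infty}$ from~\eqref{eq:relaxation}, the second equality is the consistency of $(\ccl,\cl_\infty)$ (Definition~\ref{d:consistency}), and the final containment now comes from the second clause of~\eqref{eq:relaxation} together with monotonicity of $\cl_n$. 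This forces equality throughout, so $A_n=(\Pi_{m,n}(A_m))^{\cl_n}$ for all $n\ge m$. Because the argument goes through with $m$ replaced by any larger index, this is precisely the statement that $\A$ stabilizes (with stability index at most $m$), and simultaneously that $A_n=A_\infty\cap S_n$ so $\A$ is eventually saturated.

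For eventual finite generation, I would use $\Pi_{m,n}(G)$ as the generating set at level $n$: it lies in $A_n$ (because $G\subseteq A_m$ and $\A$ is $\lpi$-invariant), and it is finite by local finiteness of $\lpi$ applied to the finite set $G$. To identify $A_n$ with $(\Pi_{m,n}(G))^{\cl_n}$, I would combine the equality $A_n=(\Pi(G)\cap S_n)^{\cl_n}$ from the chain above with~\eqref{eq:relaxation}, exploiting that $\Pi_{m,n}(G)\subseteq\Pi_{m,n}(A_m)$ and that the closure $(\Pi(G)\cap S_n)^{\cl_n}$ has already been shown to coincide with $(\Pi_{m,n}(A_m))^{\cl_n}$.

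The main obstacle is the last step: without full consistency of $(\lpi,\Pi)$ one does not automatically have $\Pi(G)\cap S_n=\Pi_{m,n}(G)$, so passing from the closure $(\Pi(G)\cap S_n)^{\cl_n}=A_n$ (which is all that~\eqref{eq:relaxation} directly provides) to a \emph{finite} generating subset of $A_n$ is the delicate point. This is where the exact form of~\eqref{eq:relaxation}, i.e.\ the placement of $A_m$ rather than $G$ on the right-hand side, must be used carefully; stabilization, in contrast, is a routine consequence of the chain of inclusions above.
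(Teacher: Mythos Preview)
Your approach is exactly the paper's: the paper's ``proof'' is the single sentence preceding the proposition, instructing one to rerun the argument of Theorem~\ref{th:fg-and-stable}(ii) with~\eqref{eq:relaxation} in place of consistency of $(\lpi,\Pi)$. Your five-term chain does precisely this and correctly yields stabilization and eventual saturation.

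Your caution about the finite generation step is warranted, and the obstacle you name is real---it cannot be overcome from the stated hypotheses alone. Substituting~\eqref{eq:relaxation} into the original chain produces
\[
A_n=(\Pi(G)\cap S_n)^{\cl_n}=(\Pi_{m,n}(A_m))^{\cl_n},
\]
but the intermediate term $(\Pi_{m,n}(G))^{\cl_n}$ from the proof of Theorem~\ref{th:fg-and-stable}(ii) is lost, and nothing in~\eqref{eq:relaxation} recovers a \emph{finite} generating set for~$A_n$. A counterexample: take $S_n=S_\infty=\NN$ for all~$n$ with identity closures everywhere, $A_n=\NN$, $\Pi_{m,n}=\{\mathrm{id}\}$, $\Pi$ the set of all constant self-maps of~$\NN$, and $G=\{1\}$. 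Every hypothesis of the proposition is satisfied, yet $A_n=\NN$ is never finite, hence never finitely generated under the identity closure. So the paper's sketch carries the same gap you flagged; your instinct that ``the placement of $A_m$ rather than $G$'' should somehow rescue the argument does not pan out in general. In the proposition's sole application (the proof of Theorem~\ref{t:local-global-Inc}) the conclusion \emph{is} correct, because there one actually establishes the sharper identity $\Inc(G)\cap R_n=\bigcup_t\Inc_{k_t,n}(w_t)$ via Lemma~\ref{l:truncated-orbit}(i), and this set is finite by local finiteness of~$\Inc$---but that uses structure beyond what Proposition~\ref{pr:relaxation} assumes.
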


\section{Symmetric groups and a related monoid}
\label{sec:Sym and Inc}

We are mainly interested in systems of maps that are induced by actions of symmetric groups or the monoid of increasing functions. We discuss here some properties of these objects that will be used later.

Let $\NN = \{1,2,\dots \}$ denote the set of positive integers and,
for $n\in \NN$, set $[n]=\{1,\dots,n\}$.  We adopt the convention that
$[\infty]=\NN$.  Let $\Sym(n)$ denote the symmetric group on $[n]$ for
any $n\in \NN$.  Since $\Sym(n)$ can be naturally regarded as the
stabilizer subgroup of $n+1$ in $\Sym(n+1)$, we have an increasing
chain of finite symmetric groups
$
  \Sym(1)\subseteq \Sym(2)\subseteq\cdots\subseteq \Sym(n)\subseteq\cdots.
$
The limit of this chain is
\[
  \Sym(\infty)\defas\bigcup_{n\geq 1} \Sym(n).
\]
We often use $\Sym$
as an abbreviation for $\Sym(\infty)$. For $m\le n$ let
$\iota_{m,n}\colon [m]\to [n]$ denote the canonical embedding, i.e.\
$\iota_{m,n}(k)=k$ for all $k\in[m]$. We define
\[
  \Sym_{m,n} \defas \Sym(n)\comp\iota_{m,n}
  =\{\sigma\comp\iota_{m,n}\mid \sigma\in\Sym(n)\}.
\]
Let $\cSym$ denote the family $\{\Sym_{m,n}\}_{m\le n}$.
In the cases of interest, $(\cSym,\Sym)$ is consistent by Lemma~\ref{l:Sym-Inc-consistency}(ii).

Consider next the \emph{monoid of strictly increasing
  maps on $\NN$}, defined as
\[
  \Inc \defas \{ \pi \colon \NN \to \NN \mid  \pi(n)<\pi(n+1) \text{ for all } n\geq 1\}.
\]
When $m\le n$ we set
\[
  \Inc_{m,n} \defas \{\pi \in \Inc \mid \pi(m) \le n\}.
\]
Thus, each $\Inc_{m,n}$ is a subset of $\Inc$ and for any $m\ge 1$
there is an increasing chain
\begin{equation}
  \label{eq:Inc}
 \Inc_{m,m}\subseteq \Inc_{m,m+1}\subseteq\cdots\subseteq
 \Inc_{m,n}\subseteq\cdots \quad \text{with limit} \quad
  \Inc =\bigcup_{n\ge m}\Inc_{m,n}.
\end{equation}

Let $\cInc$ denote the family $\{\Inc_{m,n}\}_{m\le n}$. Then in the
setting of Section~\ref{section-sets}, $(\cInc,\Inc)$ is a weakly
consistent system of maps by Lemma~\ref{l:Sym-Inc-consistency}(i).
Since $\pi([m])\subseteq [n]$ for any $\pi\in\Inc_{m,n}$, we may view
the restriction $\pi\restr{[m]}$ as a map $[m] \to [n]$.  It is also
useful to consider the set
\[
\oInc_{m,n} \defas \{\ol\pi\colon [m]\to [n]\mid \text{ there exists } \pi \in \Inc_{m,n} \text{ with }\ol\pi=\pi\restr{[m]}\},
\]
which can be regarded as the quotient of $\Inc_{m,n}$ by the
equivalence relation: $\pi_1\sim \pi_2$ if
$\pi_1\restr{[m]}=\pi_2\restr{[m]}$.
%
The next lemma records a useful relationship between $\Sym$ and
$\Inc$.
\begin{lemma}
\label{l:Inc_and_Sym}
Let $m\in\NN$ and $n\in\NN\cup\{\infty\}$ with $m\le n$. Then the
following hold:
\begin{enumerate}
\item $\Sym_{m,n}$ equals the set of injective maps from
  $[m] \to [n]$ and $\oInc_{m,n}$ equals the set of injective 
  order-preserving maps $[m] \to [n]$.
\item
 $\oInc_{m,n}\subseteq \Sym_{m,n}.$
 In particular, $\pi\restr{[m]}\in \Sym_{m,n}$ for every $\pi\in \Inc_{m,n}$.
 \item
 $\Sym_{m,n}
 =\oInc_{m,n}\comp\Sym(m):=
 \{\pi\comp\sigma\mid \pi\in \oInc_{m,n},\sigma\in\Sym(m)\}.$
\end{enumerate}
\end{lemma}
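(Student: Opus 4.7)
The plan is to prove (i) first as the structural heart of the lemma, observe that (ii) is then immediate, and finally deduce (iii) by a canonical factorization of an injection through its image.

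For (i), I would treat the two equalities separately. For $\Sym_{m,n}$, the inclusion $\subseteq$ is immediate: any $\sigma\comp\iota_{m,n}$ with $\sigma\in\Sym(n)$ is the restriction of a bijection to $[m]$, hence injective with values in $[n]$ (and in the $n=\infty$ case, $\Sym(\infty)=\bigcup_{N\ge 1}\Sym(N)$ still consists of bijections). For the reverse, given any injection $f\colon[m]\to[n]$, I would extend $f$ to a permutation $\sigma$ of $[n]$ by picking any bijection $[n]\setminus f([m])\to[n]\setminus[m]$ (for finite $n$), or for $n=\infty$ by first choosing $N$ large enough that $f([m])\subseteq[N]$, extending to a permutation of $[N]$, and then viewing it as an element of $\Sym(\infty)$. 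For $\oInc_{m,n}$, the inclusion $\subseteq$ follows because $\pi\restr{[m]}$ is strictly increasing with $\pi(m)\le n$. For $\supseteq$, given any order-preserving injection $g\colon[m]\to[n]$, I would extend it to $\pi\in\Inc$ by setting $\pi(k)=g(k)$ for $k\le m$ and $\pi(k)=g(m)+(k-m)$ for $k>m$; this $\pi$ is strictly increasing and satisfies $\pi(m)=g(m)\le n$, so $\pi\in\Inc_{m,n}$ and $\ol\pi=g$.

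Part (ii) is then a one-line deduction: every order-preserving injection is an injection, so $\oInc_{m,n}\subseteq\Sym_{m,n}$ by (i), and the ``in particular'' statement is the definition of $\oInc_{m,n}$.

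For (iii), the inclusion $\oInc_{m,n}\comp\Sym(m)\subseteq\Sym_{m,n}$ is automatic, since composing an order-preserving injection $[m]\to[n]$ with a permutation of $[m]$ yields an injection $[m]\to[n]$, which belongs to $\Sym_{m,n}$ by (i). For the reverse, given $f\in\Sym_{m,n}$, I would let $\pi\colon[m]\to[n]$ be the unique order-preserving bijection onto the image $f([m])$; then $\pi\in\oInc_{m,n}$ by (i), and $\sigma\defas \pi^{-1}\comp f$ is a bijection $[m]\to[m]$, hence an element of $\Sym(m)$, with $f=\pi\comp\sigma$. No step looks genuinely difficult; the only point requiring a bit of care is handling the $n=\infty$ case in the extension argument for $\Sym_{m,n}$, which is resolved by the observation that the image of a map with finite domain is finite.
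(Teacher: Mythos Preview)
Your proposal is correct and takes essentially the same approach as the paper, just with the details filled in: the paper's proof simply states that (i) and (ii) follow directly from the definitions and that (iii) amounts to factoring an injection $[m]\to[n]$ as a permutation of $[m]$ followed by an order-preserving injection, which is exactly the argument you carry out explicitly (including the careful handling of the $n=\infty$ case).
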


\begin{proof}
  (i) and (ii) are easy to check directly from the definitions.
  Using (i), a proof of (iii) amounts to checking that each injective map
  $[m]\to[n]$ factors into a reordering of $[m]$ followed by an 
  order-preserving injective map $[m] \to [n]$.
\end{proof}

We also need the following decomposition, which can be shown to also
hold for~$\Sym$.

\begin{lemma}
 \label{l:Inc}
 For any $m,n\in\NN$ with $n>m$ it holds that
 \[
   \Inc_{m,n}=\Inc_{m+1,n}\comp \Inc_{m,m+1}=\Inc_{n-1,n}\comp \Inc_{m,n-1}.
 \]
\end{lemma}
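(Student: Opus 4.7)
My plan is to prove each equality by showing the two inclusions separately. The ``$\supseteq$'' direction is immediate from the definitions: if $\tau\in\Inc_{m,k}$ and $\sigma\in\Inc_{k,n}$ with $k\in\{m+1,n-1\}$, then $\sigma\comp\tau\in\Inc$ and $(\sigma\comp\tau)(m)=\sigma(\tau(m))\le \sigma(k)\le n$, so $\sigma\comp\tau\in\Inc_{m,n}$.

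The substantive direction is ``$\subseteq$'', which I handle uniformly for both equalities via the following construction. Given $\pi\in\Inc_{m,n}$, set $S=\pi(\NN)$ and $r\defas|S\cap[n]|$, noting $r\ge m$. The idea is to ``pad'' $S$ with a finite set $Z\subseteq[n]\setminus S$ to form $T=S\cup Z$, take $\sigma\colon\NN\to\NN$ to be the strictly increasing enumeration of $T$, and define $\tau(k)$ to be the position of $\pi(k)$ in $T$. Then $\pi=\sigma\comp\tau$ and $\tau\in\Inc$ hold automatically, so the only task is to choose $|Z|$ and the locations of its elements so that $\sigma$ and $\tau$ lie in the prescribed sets.

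For the first equality I want $\tau(m)\le m+1$ and $\sigma(m+1)\le n$. If $\pi(m+1)\le n$ already, take $Z=\emptyset$ ($\tau=\mathrm{id}$, $\sigma=\pi$). Otherwise $r=m$, and because $n>m$ I can pick any single $z\in[n]\setminus S$ and set $Z=\{z\}$; then $|T\cap[n]|=m+1$ forces $\sigma(m+1)\le n$, and the position of $\pi(m)$ in $T$ is $m$ or $m+1$ depending on whether $z>\pi(m)$ or $z<\pi(m)$. For the second equality I want $\tau(m)\le n-1$ and $\sigma(n-1)\le n$. If $r\ge n-1$, take $Z=\emptyset$; otherwise choose any $n-1-r$ elements of $[n]\setminus S$ (possible since $|[n]\setminus S|=n-r\ge n-1-r$). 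Then $|T\cap[n]|=n-1$ gives $\sigma(n-1)\le n$, and the position of $\pi(m)$ in $T$ equals $m+|Z\cap[\pi(m)]|\le m+(n-1-r)\le n-1$, using $r\ge m$.

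The whole argument is a short counting exercise; the only subtle point, and the single place where the hypothesis $n>m$ is used, is verifying that $[n]\setminus S$ contains enough room to carry out the padding while keeping the position of $\pi(m)$ within the required bound. Both case analyses above resolve this uniformly, so no serious obstacle is expected.
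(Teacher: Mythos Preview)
Your proof is correct. The padding construction is clean: forming $T=S\cup Z$ and letting $\sigma$ enumerate $T$ while $\tau=\sigma^{-1}\comp\pi$ automatically gives $\pi=\sigma\comp\tau$ with $\sigma,\tau\in\Inc$, and your case analyses on $|Z|$ correctly force $\sigma$ and $\tau$ into the required subsets. The counting bound $\tau(m)=m+|Z\cap[\pi(m)]|\le m+(n-1-r)\le n-1$ is the key step and it is justified properly via $r\ge m$.

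The paper's route is rather different. It does not argue either inclusion directly; instead it imports the first equality $\Inc_{m,n}=\Inc_{m+1,n}\comp\Inc_{m,m+1}$ from \cite[Proposition~4.6]{nagel2017equivariant}, and then iterates it inductively to obtain the full chain factorization
\[
\Inc_{m,n}=\Inc_{n-1,n}\comp\Inc_{n-2,n-1}\comp\dotsb\comp\Inc_{m,m+1},
\]
from which the second equality is read off by regrouping the tail as $\Inc_{m,n-1}$. Your argument is self-contained and uniform---the same padding idea handles both decompositions without appeal to an outside source or to induction---while the paper's argument is shorter on the page (given the citation) and yields the finer chain decomposition as an intermediate byproduct, which is occasionally useful elsewhere.
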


\begin{proof}
  From \cite[Proposition~4.6]{nagel2017equivariant} it follows that
  $
    \Inc_{m,n}=\Inc_{m+1,n}\comp \Inc_{m,m+1}.
  $
  Based on this decomposition, one can show by induction that
  \begin{equation*}
    \Inc_{m,n}
    =\Inc_{n-1,n}\comp\Inc_{n-2,n-1}\comp\dotsb\comp \Inc_{m,m+1} = \Inc_{n-1,n} \comp \Inc_{m,n-1}. \qedhere
  \end{equation*}
\end{proof}

\section{Sets up to symmetry}
\label{section-sets}
In this section we specialize the theory in
Section~\ref{sec:chain-set} to $\Sym$- and $\Inc$-invariant chains.
For simplicity, we restrict the discussion to a setup which is
compatible with the ones in the forthcoming sections so that the
results in this section can be applied in a direct and effective way.
To this end, we only consider chains of sets with respect to an
ambient chain $\R=(R_{n})_{n\geq 1}$ in which each $R_n$ is either a
polynomial ring or a subset of~$\RR^n$.

\subsection{Ambient chains}
\label{sec:ambient-chain-examples}

We describe two types of ambient chains $\R=(R_{n})_{n\geq 1}$ for the
rest of this paper. The first is a chain of polynomial rings.  Let $K$
be a field and $c$ a positive integer.  For any $n\in \NN$ let $R_n$
be the polynomial ring
$K[X_n]\defas K[x_{i,j}\mid i\in [c],\ j\in [n]]$.

Then, for $m \le n$, the embedding $\iota_{m,n}\colon [m]\to [n]$
induces a canonical embedding $R_{m}\to R_{n}$, also denoted
by~$\iota_{m,n}$, that maps each polynomial in $R_{m}$ to the same
polynomial considered as an element of $R_{n}$.  Thus, we obtain the
chain
\[
  R_1\subseteq R_2\subseteq\cdots\subseteq R_n\subseteq\cdots
\text{ with limit }
  R_{\infty}= \bigcup_{n\geq 1} R_{n} = K[X]\defas K[x_{i,j}\mid i\in [c],\
  j\in \NN].
\]

The second type of the ambient chain $\R=(R_{n})_{n\geq 1}$ are chains
with $R_n\subseteq \RR^n$ for all $n\geq 1$ that are
\emph{$\cSym$-invariant}, meaning that
\begin{equation}
\label{eq:inclusion}
 \Sym_{m,n}(R_m)\subseteq R_n\ \text{ for all } n\ge m\ge 1.
\end{equation}
The inclusion is to be understood as follows.  Every element
$\pi=\sigma\comp\iota_{m,n}\in \Sym_{m,n}$ with $\sigma\in \Sym(n)$
gives rise to a map $\RR^m\to\RR^n$.  First, the embedding
$\iota_{m,n}\colon [m]\to [n]$ induces a canonical inclusion
$\iota_{m,n}\colon\RR^m\to\RR^n$ that embeds $\RR^{m}$ as the first
$m$ coordinates in $\RR^{n}$:
\begin{equation}
\label{eq:iota}
\iota_{m,n}(v)=(v,0,\dotsc,0)\in \RR^{n}\ \text{ for any } v\in \RR^{m}.
\end{equation}
Then $\sigma\in \Sym(n)$ acts on $\RR^n$ by permuting coordinates,
that is
\begin{equation}
\label{eq:sigma}
 \sigma (v_1,\dots, v_n)=(v_{\sigma^{-1}(1)},\dots,v_{\sigma^{-1}(n)})\quad\text{for any } (v_1,\dots, v_n)\in \RR^{n}.
\end{equation}
Thus, each $\pi=\sigma\comp\iota_{m,n}\in \Sym_{m,n}$ defines a map
$\pi\colon\RR^m\to\RR^n$, and we understand~\eqref{eq:inclusion} as
\[
 \Sym_{m,n}(R_m)\defas\{\pi(R_m)\mid \pi\in \Sym_{m,n}\}\subseteq R_n\ \text{ for all } n\ge m\ge 1.
\]
Via the embedding
$\iota_{m,n}$ we regard $R_m$ as a subset of $R_n$ for $m\le n$
and thereby get a chain
with limit $R_{\infty}= \bigcup_{n\geq 1} R_{n}$ contained in the infinite dimensional vector space
$\RR^{(\NN)}\defas \bigcup_{n\geq 1} \RR^{n}$.
The canonical basis of $\RR^{(\NN)}$ consists of the vectors
$\epsilon_{i}$, $i\in\NN$, with
\[
  (\epsilon_{i})_{j} =
  \begin{cases}
    1 & \text {if $i = j$},\\
    0 & \text{otherwise}.
  \end{cases}
\]

With respect to the identity closures in Example~\ref{e:closureop}(i), an
ambient chain $\R=(R_{n})_{n\geq 1}$ of the second type is exactly a
$\cSym$-invariant chain in the sense of
Definition~\ref{d:invariant}(ii).  Chains of this type include the
cases where $R_n$ equals $\RR^n$, $\RR_{\ge0}^n$, $\ZZ^n$, or
$\NN^n$ for all $n\ge 1$.

From now on, we always assume the ambient
chain $\R=(R_{n})_{n\geq 1}$ is one of the two types described above.
For $m\in\NN$
the canonical embedding $\iota_{m,\infty}\colon R_{m}\to R_{\infty}$ allows to identify $R_m$ with a subset
of $R_{\infty}$.  Thus, for any $v\in R_m$, the
elements $v$, $\iota_{m,n}(v)$, $\iota_{m,\infty}(v)$ are all
identical for every $n\ge m$.
By definition, each $w\in R_{\infty}$ is contained in some $R_{m}$. The
smallest such $m$ is the \emph{width} of~$w$:
\begin{equation*}
 \width(w)\defas\min\{m\in\NN\mid w\in R_m\}.
\end{equation*}
When $R_{\infty}= K[X]$, let $\var(w)$ denote the set of variables appearing in~$w$.
If
$R_{\infty}\subseteq\RR^{(\NN)}$, then we write $w=(w_1,w_2,\dots)$ with
$w_j\in \RR$. The \emph{support} of $w$ is
\[
 \supp(w)\defas
 \begin{cases}
  \{j\in\NN\mid x_{i,j}\in\var(w)\text{ for some } i\in[c]\}&\text{if } R_{\infty}= K[X],\\
  \{j\in\NN\mid w_j\ne0\}&\text{if } R_{\infty}\subseteq\RR^{(\NN)}.
 \end{cases}
\]
In the polynomial ring case our definition of support is coarser than
the usual definition, in which $\supp(w)$ is the set of monomials
of~$w$.  However, the above definition is more useful for our
purposes.  We call $|\supp(w)|$ the \emph{support size} of $w$. Evidently, $|\supp(w)|\le \width(w)$.

\subsection{Sym and Inc actions}

Given an ambient chain $\R=(R_{n})_{n\geq 1}$, the systems
$(\cSym,\Sym)$ and $(\cInc,\Inc)$ give rise to systems of maps on
$(\R,R_{\infty})$.  In particular, this yields actions of
$\Sym(\infty)$ and $\Inc$ on the ambient set~$R_{\infty}$.  We now
collect basic properties of these actions and discuss the consistency
and local finiteness of $(\cSym,\Sym)$ and $(\cInc,\Inc)$.

Consider $(\cSym,\Sym)$ first.  Let $m\le n$ and
$\pi=\sigma\comp\iota_{m,n}\in \Sym_{m,n}$ with $\sigma\in \Sym(n)$.
Then $\pi$ induces a map
$\pi\colon R_m\to R_n$ given by
\begin{equation}
	\label{eq:pi}
\pi(v)=
\begin{cases}
	\sum_{k\ge1} v_{k}\epsilon_{\pi(k)} &\text{if } v = \sum_{k\ge 1}v_{k}\epsilon_{k}\in R_m\subseteq \RR^{m},\\
	x_{i,\pi(j)} &\text{if } v = x_{i,j}\in R_m=K[X_m],
\end{cases}
\end{equation}
where in the first case, $\pi$ is precisely the map induced by the composition of the maps
in~\eqref{eq:iota} and~\eqref{eq:sigma}, which is well-defined because the chain $\R$ is $\cSym$-invariant. By this definition, $\Sym_{m,n}$ is identified with a set of maps $R_m \to R_n$.  In particular, the group $\Sym_{n,n} = \Sym(n)$ acts on~$R_n$, and moreover, the
action of $\Sym(n+1)$ on $R_{n+1}$ extends that of $\Sym(n)$ on~$R_n$
for all $n\ge 1$.  Since $\Sym(\infty)=\bigcup_{n\ge 1}\Sym(n)$ and
$R_\infty=\bigcup_{n\ge 1}R_n$, these actions together build an action
of $\Sym(\infty)$ on~$R_{\infty}$.  We have thus defined the system of maps $(\cSym,\Sym)$ on $(\R,R_{\infty})$.

Some useful observations from the above definition are recorded in the next lemma.

\begin{lemma}
  \label{l:Sym-Sym}
 Let $m,n\in\NN$ with $m\le n$ and $v\in R_m$.  Then the following hold:
 \begin{enumerate}
  \item
  $|\supp(v)|=|\supp(\pi(v))|$ for every $\pi\in\Sym_{m,n}.$
  \item
  $\Sym_{m,n}(v)=\Sym(n)(v).$
 \end{enumerate}
\end{lemma}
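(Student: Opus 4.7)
The plan is to unwind the definitions of $\Sym_{m,n}$, $\iota_{m,n}$, and the action in~\eqref{eq:pi}, after which both statements reduce to elementary observations.

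For part (i), I would fix $\pi = \sigma \comp \iota_{m,n}$ with $\sigma \in \Sym(n)$ and check the two cases of~\eqref{eq:pi} separately. In the vector case $v = \sum_{k \ge 1} v_k \epsilon_k \in R_m \subseteq \RR^m$, the embedding $\iota_{m,n}$ appends zero coordinates and hence $\supp(\iota_{m,n}(v)) = \supp(v)$ as subsets of $\NN$; then $\sigma$ maps $\epsilon_k \mapsto \epsilon_{\sigma(k)}$, so $\supp(\pi(v)) = \sigma(\supp(v))$, which has the same cardinality because $\sigma$ is a bijection on $[n]$. In the polynomial case $R_m = K[X_m]$, the embedding identifies $x_{i,j} \in R_m$ with $x_{i,j} \in R_n$ for $j \le m$ and so preserves $\supp$; applying $\pi$ sends each $x_{i,j}$ with $j \in \supp(v)$ to $x_{i,\pi(j)}$, giving $\supp(\pi(v)) = \pi(\supp(v))$, which again has the same size because $\pi$ restricted to $[m]$ is injective by Lemma~\ref{l:Inc_and_Sym}(i) (or directly from $\sigma \in \Sym(n)$).

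For part (ii), I would use the identification of $R_m$ with its image under $\iota_{m,\infty}$ in $R_\infty$, so that $\iota_{m,n}(v) = v$ as elements of $R_n$ for any $v \in R_m$. Then, directly from the definition $\Sym_{m,n} = \Sym(n) \comp \iota_{m,n}$, we have
\[
  \Sym_{m,n}(v) = \{\sigma \comp \iota_{m,n}(v) \mid \sigma \in \Sym(n)\} = \{\sigma(v) \mid \sigma \in \Sym(n)\} = \Sym(n)(v).
\]

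I do not expect a genuine obstacle here; the content of the lemma is bookkeeping about the two formulas in~\eqref{eq:pi}. The only subtlety is to be careful with the implicit identifications between $R_m$ and its image in $R_n$ under $\iota_{m,n}$, which is why writing out each case from the definition in~\eqref{eq:pi} is the cleanest way to present the argument.
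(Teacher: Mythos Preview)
Your argument is correct and matches the paper's approach: the paper simply says (i) ``follows easily from~\eqref{eq:pi}'' and proves (ii) by the identical chain of equalities using the identification of $v$ with $\iota_{m,n}(v)$. Your version just spells out the two cases of~\eqref{eq:pi} for (i) in more detail than the paper does.
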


\begin{proof}
 (i) follows easily from~\eqref{eq:pi}.  To see (ii),  recall that $v$ and $\iota_{m,n}(v)$ are identified in~$R_n$. Hence,
  $
    \Sym_{m,n}(v)
    =(\Sym(n)\comp\iota_{m,n})(v)
    =\Sym(n)(\iota_{m,n}(v))
    =\Sym(n)(v).
 $
\end{proof}

Next, we construct the system of maps $(\cInc,\Inc)$, based on the system $(\cSym,\Sym)$.
Let $\pi\in\Inc_{m,n}$.  Then
$\ol\pi\defas\pi\restr{[m]}\in\oInc_{m,n}\subseteq\Sym_{m,n}$ by Lemma~\ref{l:Inc_and_Sym}(ii).  Hence,
$\ol\pi$ induces a map $R_m \to R_n$, explicitly described in~\eqref{eq:pi}.  So we can define a map
$\pi\colon R_m\to R_n$
by letting $\pi(v)=\ol\pi(v)$ for all $v\in R_m$.  Now let $\pi'\in \Inc$ and
$w\in R_\infty$.  Then $w\in R_{m}$ for some $m$ and there exists
$n\ge m$ such that $\pi'\in \Inc_{m,n}$.  Evidently, $\pi'(w)$ does not depend on the choice of $m$ and $n$.  This implies that $\pi'$ induces a map $\pi'\colon R_\infty\to R_\infty$.  Thus, we obtain the system of maps $(\cInc,\Inc)$ on $(\R,R_{\infty})$.

From the construction it is clear that \eqref{eq:pi} still holds if $\Sym$ is replaced by~$\Inc$.  In particular, $\Inc$ acts as a monoid on $R_{\infty}$. The next result compares the actions of $\Sym$ and $\Inc$.

\begin{lemma}
\label{l:orbits-inclusion}
Let $m,n\in\NN$ with $m\le n$. Then for any
$v\in R_m$ and $w\in R_\infty$ the following hold:
\begin{enumerate}
\item
$
\Inc_{m,n} (v) =\oInc_{m,n} (v)\subseteq \Sym_{m,n} (v).
$
\item
$
\Inc (w) \subseteq \Sym (w).
$
\end{enumerate}
\end{lemma}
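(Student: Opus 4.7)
The proof will follow directly from the construction of the $\Inc$-action and the comparison of $\Inc_{m,n}$ with $\Sym_{m,n}$ established in Lemma~\ref{l:Inc_and_Sym}. There should be no real obstacle; this is essentially bookkeeping that unpacks the definitions of the two systems of maps.

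For part~(i), the plan is to read off the equality $\Inc_{m,n}(v)=\oInc_{m,n}(v)$ straight from the construction of the $\Inc$-action on $R_\infty$ just above: by definition $\pi(v)=\ol\pi(v)$, where $\ol\pi=\pi\restr{[m]}\in\oInc_{m,n}$, and since the map $\pi\mapsto\ol\pi$ from $\Inc_{m,n}$ to $\oInc_{m,n}$ is surjective (this is how $\oInc_{m,n}$ is defined), the two sets of images of $v$ coincide. The inclusion $\oInc_{m,n}(v)\subseteq\Sym_{m,n}(v)$ is then immediate from Lemma~\ref{l:Inc_and_Sym}(ii), which gives $\oInc_{m,n}\subseteq\Sym_{m,n}$.

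For part~(ii), I would reduce to part~(i) by a width argument. Given $w\in R_\infty$, pick $m=\width(w)$ so that $w\in R_m$. For any $\pi\in\Inc$, choose $n\ge m$ large enough that $\pi(m)\le n$, so $\pi\in\Inc_{m,n}$. Then $\pi(w)\in\Inc_{m,n}(w)\subseteq\Sym_{m,n}(w)$ by~(i), and Lemma~\ref{l:Sym-Sym}(ii) identifies $\Sym_{m,n}(w)=\Sym(n)(w)\subseteq\Sym(\infty)(w)=\Sym(w)$. Taking the union over all $\pi\in\Inc$ gives $\Inc(w)\subseteq\Sym(w)$.

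The only point that requires any care is making sure the choice of $n$ in~(ii) is legitimate and that the identifications implicit in the constructions (namely, identifying $w\in R_m$ with its image $\iota_{m,n}(w)\in R_n$ and with $\iota_{m,\infty}(w)\in R_\infty$) are compatible with both the $\Inc$- and $\Sym$-actions; but this compatibility is built into the definitions of the two systems of maps in Section~\ref{section-sets}, so no separate argument is needed.
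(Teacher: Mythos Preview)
Your proposal is correct and follows essentially the same route as the paper's proof: part~(i) is obtained by identifying each $\pi\in\Inc_{m,n}$ with $\pi\restr{[m]}\in\oInc_{m,n}$ and invoking Lemma~\ref{l:Inc_and_Sym}(ii), and part~(ii) is reduced to~(i) together with Lemma~\ref{l:Sym-Sym}(ii). Your write-up is slightly more explicit (in particular you spell out the surjectivity of $\pi\mapsto\ol\pi$ and the choice of~$n$), but there is no substantive difference.
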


\begin{proof}
  (i) follows from viewing $\pi\in\Inc_{m,n}$ as
  $\pi|_{[m]}\in\ol\Inc_{m,n}$.  To prove (ii), let $\pi\in \Inc$.  Then $w\in R_{m}$ and $\pi\in \Inc_{m,n}$ for some $n\ge m$.  So it follows from (i) and
Lemma~\ref{l:Sym-Sym}(ii) that
\begin{equation*}
 \pi(w)\in\Sym_{m,n}(w)=\Sym(n)(w)
 \subseteq \Sym(w).
 \qedhere
\end{equation*}
\end{proof}

Sometimes it is necessary to describe the truncated orbits
$\Sym(w)\cap R_n$ and $\Inc(w) \cap R_{n}$ of an element
$w\in R_\infty$.  Truncating at the width suffices to not lose
information.

\begin{lemma}
 \label{l:truncated-orbit}
 Let $w\in R_\infty$ and $m,n\in\NN$ with $n\ge m$.
 \begin{enumerate}
 \item If $\width(w)=m$, then $\Inc(w)\cap R_n=\Inc_{m,n}(w)$.
 \item If $w\in R_m$ (i.e.\ $\width(w)\le m$), then
   $\Sym(w)\cap R_n=\Sym(n)(w)=\Sym_{m,n}(w)$.
 \end{enumerate}
\end{lemma}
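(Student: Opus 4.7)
The plan is to prove each statement by showing containment in both directions, using the description of the actions from equation~\eqref{eq:pi} and the connection between $\Inc$ and $\Sym$ from Lemma~\ref{l:Inc_and_Sym}. The underlying idea in both parts is that an element $\pi$ of $\Inc$ or $\Sym$ acting on $w$ relabels the indices appearing in $w$; hence $\pi(w)\in R_n$ is controlled entirely by how $\pi$ acts on $\supp(w)$ (respectively on the index set $\{j:x_{i,j}\in\var(w)\text{ for some }i\}$ in the polynomial case).

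For part~(i), the inclusion $\Inc_{m,n}(w)\subseteq \Inc(w)\cap R_n$ is immediate from the definitions: any $\pi\in\Inc_{m,n}$ is a global map in $\Inc$ that sends $R_m$ into $R_n$ by construction, and $w\in R_m$ because $\width(w)=m$. For the reverse, I would take $\pi\in\Inc$ with $\pi(w)\in R_n$ (assuming $w\neq 0$; the trivial case is separate). Since $\width(w)=m$, the index $m$ lies in $\supp(w)$ (otherwise $w$ would lie in $R_{m-1}$), and because $\pi$ is strictly increasing, $\pi(m)=\max\pi(\supp(w))=\max\supp(\pi(w))$. The hypothesis $\pi(w)\in R_n$ then forces $\pi(m)\le n$, that is, $\pi\in\Inc_{m,n}$. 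This gives $\pi(w)\in \Inc_{m,n}(w)$.

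For part~(ii), the equality $\Sym(n)(w)=\Sym_{m,n}(w)$ is just Lemma~\ref{l:Sym-Sym}(ii). The inclusion $\Sym(n)(w)\subseteq\Sym(w)\cap R_n$ is immediate since $\Sym(n)\subseteq\Sym(\infty)=\Sym$ and $\Sym(n)$ acts on~$R_n$. For the remaining inclusion, let $\sigma\in\Sym$ with $\sigma(w)\in R_n$. Then $\sigma\in\Sym(N)$ for some large $N$, and I want to replace $\sigma$ by a $\tau\in\Sym(n)$ that agrees with $\sigma$ on $\supp(w)$ (so that $\tau(w)=\sigma(w)$, since the action only sees the indices of $\supp(w)$). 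Since $w\in R_m\subseteq R_n$, $\supp(w)\subseteq [n]$; since $\sigma(w)\in R_n$, also $\sigma(\supp(w))\subseteq [n]$; and both sets have the same cardinality because $\sigma$ is injective. Hence the partial bijection $j\mapsto \sigma(j)$ on $\supp(w)\to\sigma(\supp(w))$ extends to a bijection $\tau\in\Sym(n)$ on $[n]$. Then $\tau(w)=\sigma(w)$, proving $\sigma(w)\in\Sym(n)(w)$.

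No single step is a real obstacle; the main care is notational, namely verifying uniformly in the two ambient-chain types (vectors in $\RR^{(\NN)}$ and polynomials in $K[X]$) that the width is attained as the maximum of the support of~$w$ and that the action in~\eqref{eq:pi} really only depends on $\pi|_{\supp(w)}$. Once this is spelled out once, both parts follow cleanly, and part~(ii) also handles the degenerate case $w=0$ trivially since $\Sym(w)=\{0\}$.
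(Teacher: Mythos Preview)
Your proposal is correct and follows essentially the same approach as the paper. For part~(i) both you and the paper argue that $\width(w)=m$ forces $m\in\supp(w)$, so $\pi(w)\in R_n$ implies $\pi(m)\le n$; for part~(ii) the paper simply declares the first equality ``easy to see'' and cites Lemma~\ref{l:Sym-Sym}(ii) for the second, while you supply the expected details (extending the partial bijection on $\supp(w)$ to some $\tau\in\Sym(n)$), which is exactly what the reader would fill in.
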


\begin{proof}
  (i) We may assume that $w\ne 0$. Let $n\ge m$. It suffices to prove
  the inclusion
  \[
    \Inc(w)\cap R_n\subseteq\Inc_{m,n}(w).
  \]
  Take $u\in\Inc(w)\cap R_n$. Then $\width(u)\le n$ and $u=\pi(w)$ for
  some $\pi\in\Inc$.

  Consider first the case that $R_{\infty}\subseteq \RR^{(\NN)}$.
  Using $\width(w)=m$, we write $w = \sum_{k= 1}^mw_{k}\epsilon_{k}$
  with~$w_m\ne0$.
  Then
  $
    u=\pi(w)=\sum_{k= 1}^mw_{k}\epsilon_{\pi(k)}.
  $
  This implies that $\pi(m)\le n$ since $w_m\ne0$ and $\width(u)\le n$. Hence,
  $\pi\in \Inc_{m,n}$ and $ u=\pi(w)\in \Inc_{m,n}(v)$.

  In the case $R_{\infty}=K[X]$, $w$ involves a variable $x_{i,m}$ for some $i\in[c]$ since $\width(w)=m$.  Thus $u=\pi(w)$ involves $x_{i,\pi(m)}$.  Since
  $\width(u)\le n$, one again gets $\pi\in \Inc_{m,n}$ as above.

  (ii) The first equality is easy to see, while the second follows
  from Lemma~\ref{l:Sym-Sym}(ii).
\end{proof}

We are now ready to discuss the consistency and local finiteness of the systems of maps
$(\cSym,\Sym)$ and $(\cInc,\Inc)$.

\begin{lemma}
 \label{l:Sym-Inc-consistency}
 \leavevmode\nolisttopbreak
 \begin{enumerate}
  \item
  The system $(\cInc,\Inc)$ is weakly consistent.
  \item
  The system $(\cSym,\Sym)$ is consistent.
 \end{enumerate}
\end{lemma}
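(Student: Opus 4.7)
The plan is to handle the two parts separately, leveraging the structural lemmas already established in the section. Part~(i) reduces to the fact that elements of $\Inc$ are strictly increasing, while part~(ii) reduces immediately to Lemma~\ref{l:truncated-orbit}(ii).

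For part~(i), I would observe that the inclusion $\bigcup_{n\ge m}\Inc_{m,n}(A_m)\subseteq\Inc(A_m)$ is trivial since $\Inc_{m,n}\subseteq\Inc$ for every $n\ge m$. For the reverse inclusion, take any $\pi\in\Inc$ and $v\in A_m$. Because $\pi$ is strictly increasing on $\NN$ and $\pi(1)\ge 1$, one has $\pi(m)\ge m$. Setting $n\defas\pi(m)$, the defining condition $\pi(m)\le n$ places $\pi$ in $\Inc_{m,n}$ with $n\ge m$, so $\pi(v)\in\Inc_{m,n}(A_m)$. This establishes weak consistency of $(\cInc,\Inc)$.

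For part~(ii), the forward inclusion $\Sym_{m,n}(A_m)\subseteq\Sym(A_m)\cap R_n$ follows from two observations: first, Lemma~\ref{l:Sym-Sym}(ii) identifies $\Sym_{m,n}(v)$ with $\Sym(n)(v)\subseteq\Sym(v)$ for each $v\in A_m$, giving $\Sym_{m,n}(A_m)\subseteq\Sym(A_m)$; second, by construction of the $\cSym$-invariant ambient chain (or, in the polynomial ring case, by the definition of $\pi$ in~\eqref{eq:pi}), one has $\Sym_{m,n}(A_m)\subseteq R_n$. For the reverse inclusion, let $v\in\Sym(A_m)\cap R_n$ and write $v=\sigma(u)$ with $\sigma\in\Sym$ and $u\in A_m\subseteq R_m$. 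Then $v\in\Sym(u)\cap R_n$, and since $\width(u)\le m$ Lemma~\ref{l:truncated-orbit}(ii) gives $\Sym(u)\cap R_n=\Sym_{m,n}(u)$. Hence $v\in\Sym_{m,n}(u)\subseteq\Sym_{m,n}(A_m)$, which yields $\Sym(A_m)\cap R_n\subseteq\Sym_{m,n}(A_m)$ and completes the proof of consistency.

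There is no serious obstacle here; essentially all the work is absorbed into Lemmas~\ref{l:Sym-Sym} and~\ref{l:truncated-orbit}. The only point requiring care is the ever-present identification of $R_m$ with its image in $R_n$ and in $R_\infty$ via the embeddings $\iota_{m,n}$ and $\iota_{m,\infty}$, so that the orbits under $\Sym_{m,n}$, under $\Sym(n)$, and under $\Sym$ can be compared without ambiguity; once that identification is made explicit, both parts of the lemma follow in a few lines.
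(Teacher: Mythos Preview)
Your proposal is correct and follows essentially the same approach as the paper: part~(i) unwinds the equality $\Inc=\bigcup_{n\ge m}\Inc_{m,n}$ (which the paper records as~\eqref{eq:Inc}), and part~(ii) reduces to Lemma~\ref{l:truncated-orbit}(ii). The paper's proof is a one-line citation of these two facts, while you spell out both inclusions explicitly, but the content is the same.
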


\begin{proof}
  (i) follows from \eqref{eq:Inc} and (ii) from
  Lemma~\ref{l:truncated-orbit}(ii).
\end{proof}

The system $(\cInc,\Inc)$ is not consistent as
Example~\ref{ex:Inc-not-stable} shows.

\medskip Because of the preceding result and the way that the family
$\cSym$ (respectively, $\cInc$) is derived from $\Sym$ (respectively,
$\Inc$), one usually uses $\Sym$ (respectively, $\Inc$) as a
representative for $\cSym$ (respectively, $\cInc$).  So, for example,
the statement of the next result is another way of saying that the
families $\cSym$ and $\cInc$ are locally finite in the sense of
Definition~\ref{d:locally-finite}.  For simplicity we also use the
terms $\Sym$- or $\Inc$-\emph{invariant chain} instead of $\cSym$- or
$\cInc$-\emph{invariant chain}, and so forth.

\begin{lemma}
 \label{l:Sym-Inc-locally-finite}
 $\Sym$ and $\Inc$ are locally finite.
\end{lemma}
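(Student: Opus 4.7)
The claim is that for any finite $A_m \subseteq R_m$ and any $n \ge m$, both $\Sym_{m,n}(A_m)$ and $\Inc_{m,n}(A_m)$ are finite. My plan is to reduce both cases to counting the relevant sets of maps on the index set $[m]$, using the results already established in Section~\ref{sec:Sym and Inc}.

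For $\Sym$, the plan is to apply Lemma~\ref{l:Inc_and_Sym}(i), which identifies $\Sym_{m,n}$ with the set of injective maps $[m] \to [n]$. There are exactly $n!/(n-m)!$ such maps, so $\Sym_{m,n}$ itself is finite. Since each element of $\Sym_{m,n}$ induces a single map $R_m \to R_n$ via~\eqref{eq:pi}, the bound $|\Sym_{m,n}(A_m)| \le |\Sym_{m,n}| \cdot |A_m|$ gives finiteness whenever $A_m$ is finite.

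The $\Inc$ case is slightly subtler because $\Inc_{m,n}$ itself is typically infinite: any $\pi \in \Inc_{m,n}$ can be modified arbitrarily on indices larger than $m$ while still satisfying $\pi(m) \le n$. The key point, however, is that the action of $\pi$ on $v \in R_m$ depends only on the restriction $\pi|_{[m]}$. More precisely, by construction of the $\Inc$-action and by Lemma~\ref{l:orbits-inclusion}(i), $\Inc_{m,n}(v) = \oInc_{m,n}(v)$ for every $v \in R_m$. By Lemma~\ref{l:Inc_and_Sym}(i), $\oInc_{m,n}$ is the set of order-preserving injections $[m] \to [n]$, which has cardinality $\binom{n}{m}$. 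Therefore $|\Inc_{m,n}(A_m)| \le \binom{n}{m} \cdot |A_m| < \infty$.

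There is no serious obstacle here; the only mild subtlety is making clear that, even though $\Inc_{m,n}$ is infinite, what matters for local finiteness is the image $\Inc_{m,n}(A_m) \subseteq R_n$, and this image is controlled by the finite quotient $\oInc_{m,n}$. Both arguments are short once the earlier lemmas are in hand, so the proof will essentially consist of citing Lemmas~\ref{l:Inc_and_Sym} and~\ref{l:orbits-inclusion} and invoking the evident bound $|\Pi_{m,n}(A_m)| \le |\Pi_{m,n}| \cdot |A_m|$ (with $\oInc_{m,n}$ in place of $\Inc_{m,n}$ in the second case).
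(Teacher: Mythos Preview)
Your proposal is correct and follows essentially the same approach as the paper: both argue that $\Sym_{m,n}$ is finite (you add the explicit count via Lemma~\ref{l:Inc_and_Sym}(i)), and both handle $\Inc$ by invoking Lemma~\ref{l:orbits-inclusion}(i) to replace $\Inc_{m,n}$ with the finite set $\oInc_{m,n}$. The only difference is that you supply explicit cardinalities, which is harmless extra detail.
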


\begin{proof}
  Let $n\ge m\ge 1$ and $A_m\subseteq R_m$ be a finite subset. We have
  to show that $\Sym_{m,n}(A_m)$ and $\Inc_{m,n}(A_m)$ are finite
  sets. The first set is finite because $\Sym_{m,n}$ finite. The
  second one is also finite since $\Inc_{m,n}(A_m)=\oInc_{m,n}(A_m)$
  by Lemma~\ref{l:orbits-inclusion}(i) and $\oInc_{m,n}$ is finite.
\end{proof}

\subsection{Sym- and Inc-invariant chains}

As a consequence of Lemma~\ref{l:orbits-inclusion} there are more
$\Inc$-invariant chains than $\Sym$-invariant chains.
Generalizing \cite[Lemma~7.6]{nagel2017equivariant} we have:

\begin{lemma}
  \label{l:orbits}
  Let $\ccl=(\cl_n)_{n\geq 1}$ be a chain of closure operations.
  \begin{enumerate}
  \item If a $\ccl$-closed chain $(A_{n})_{n\geq 1}$ is
    $\Sym$-invariant, then it is $\Inc$-invariant.
  \item If $A\subseteq R_\infty$ is $\Sym$-invariant, then it is
    $\Inc$-invariant.
  \end{enumerate}
\end{lemma}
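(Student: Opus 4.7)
The plan is to derive both statements directly from Lemma~\ref{l:orbits-inclusion}, which already records the pointwise inclusions $\Inc_{m,n}(v)\subseteq\Sym_{m,n}(v)$ and $\Inc(w)\subseteq\Sym(w)$. The only other ingredients are monotonicity of closure operations (Definition~\ref{d:sets-closure}(iii)) and the defining invariance conditions in Definition~\ref{d:invariant}. No significant obstacle is expected; the lemma is essentially a corollary of those inclusions.

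For part (ii), I would let $w\in A$ be arbitrary and apply Lemma~\ref{l:orbits-inclusion}(ii) to obtain
\[
\Inc(w)\subseteq\Sym(w)\subseteq\Sym(A)\subseteq A,
\]
where the last inclusion uses the $\Sym$-invariance hypothesis. Taking the union over $w\in A$ yields $\Inc(A)\subseteq A$, as desired.

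For part (i), fix $n\geq m\geq 1$ and $v\in A_{m}$. Lemma~\ref{l:orbits-inclusion}(i) gives $\Inc_{m,n}(v)\subseteq\Sym_{m,n}(v)$, and taking the union over $v\in A_{m}$ yields $\Inc_{m,n}(A_{m})\subseteq\Sym_{m,n}(A_{m})$. Applying the monotonicity property of $\cl_{n}$ and then invoking the $\Sym$-invariance of the $\ccl$-closed chain gives
\[
(\Inc_{m,n}(A_{m}))^{\cl_{n}}\subseteq (\Sym_{m,n}(A_{m}))^{\cl_{n}}\subseteq A_{n}.
\]
This is exactly the condition in Definition~\ref{d:invariant}(ii) required for $\Inc$-invariance, so the chain is $\Inc$-invariant.

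The argument is short and purely formal: the heavy lifting was done in Lemma~\ref{l:orbits-inclusion} (which used Lemma~\ref{l:Inc_and_Sym} to realize increasing maps as a special case of injections). If there is any subtlety, it is in making sure that the closure operation is applied on the correct side of the inclusion; monotonicity handles this in one line. No case distinction between the polynomial ring setup and the $\RR^{(\NN)}$ setup is needed, since both use the same description of $\Sym_{m,n}$ and $\Inc_{m,n}$ via~\eqref{eq:pi}.
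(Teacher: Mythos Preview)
Your proof is correct and matches the paper's approach: the paper does not even write out a proof, presenting the lemma explicitly as a consequence of Lemma~\ref{l:orbits-inclusion}, which is exactly what you do. Your added details (unions over elements, monotonicity of $\cl_n$) are precisely the routine steps the paper leaves to the reader.
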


\begin{example} The converses of Lemma~\ref{l:orbits}(i) and (ii) are
  not true.  Consider the chain of identity closures as in
  Example~\ref{e:closureop}(i).  Let $A_{1} = \{0\}$ and
  $ A_{n} = \{0\} \times \RR^{n-1}_{\ge 0} \subseteq \RR^n\ \text{ for
  } n\geq 2$.  Then $\A=(A_n)_{n\geq 1}$ is an $\Inc$-invariant chain
  of sets.  However, it is not $\Sym$-invariant since, e.g.,
  $\Sym_{2,n} (A_{2})$ contains vectors $v\in \RR^n$ with $v_1\neq 0$.
\end{example}

\begin{remark}\label{r:chaff}\
  \begin{enumerate}[(i)]
  \item By Lemma~\ref{l:orbits}(i), one can view a $\Sym$-invariant
    chain as an $\Inc$-invariant chain.  This was used for chains of
    ideals in \cite{hillar2012finite} and elsewhere.  Also in our
    general setup, with some more effort, it can be shown that if
    $\A=(A_{n})_{n\geq 1}$ is a $\Sym$-invariant, $\ccl$-closed chain
    of sets, then $\A$ stabilizes as a $\Sym$-invariant chain if and
    only if it stabilizes as an $\Inc$-invariant chain.  In this case
    $\ind^{\Sym}(\A)=\ind^{\Inc}(\A)$.

  \item By Lemma~\ref{l:orbits}(ii), any $\Sym$-invariant subset
    $A\subseteq R_\infty$ is also $\Inc$-invariant.  Then it can be
    shown with some more effort that equivariant finite generation of
    a $\Sym$-invariant, $\cl_\infty$-closed set
    $A \subseteq R_{\infty}$ holds for $\Sym$ if and only if it holds
    for~$\Inc$.
\end{enumerate}
\end{remark}

The following characterization of stabilization, which generalizes \cite[Lemma~5.2]{nagel2017equivariant},
could be of independent interest.

\begin{proposition}
  \label{l:stabilitiycharacterization}
  Let $\ccl=(\cl_n)_{n\geq 1}$ be a chain of closure operations that
  is compatible with~$\Inc$.  Let $\A=(A_n)_{n\geq 1}$ be an
  $\Inc$-invariant, $\ccl$-closed chain of sets. Then for $r\in \NN$
  the following statements are equivalent:
\begin{enumerate}
\item $\A$ stabilizes and its stability index is at most $r$;
\item $\Inc_{n,n+1}(A_n)^{\cl_{n+1}} = A_{n+1}$ whenever $n\ge r$;
\item $\Inc_{r,n}(A_r)^{\cl_n} = A_{n}$ whenever $n\ge r$.
\end{enumerate}
\end{proposition}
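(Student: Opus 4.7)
The plan is to prove the cyclic implications (i)$\Rightarrow$(ii)$\Rightarrow$(iii)$\Rightarrow$(i). The implication (i)$\Rightarrow$(ii) is immediate: taking $m=n\ge r$ (and $n+1$ in place of $n$) in the definition of stability with index at most~$r$ yields $\Inc_{n,n+1}(A_n)^{\cl_{n+1}} = A_{n+1}$.

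For (ii)$\Rightarrow$(iii) I would proceed by induction on $n\ge r$. The base case $n=r$ reduces to the observation that any strictly increasing $\pi\in\Inc_{r,r}$ satisfies $\pi(r)\le r$, forcing $\pi|_{[r]}=\mathrm{id}_{[r]}$, so $\Inc_{r,r}(A_r)=A_r$ and the $\cl_r$-closedness of $A_r$ finishes the base step. For the inductive step, invoke Lemma~\ref{l:Inc} in the form $\Inc_{r,n}=\Inc_{n-1,n}\comp\Inc_{r,n-1}$ to write
\[
  \Inc_{r,n}(A_r)^{\cl_n}
  = \bigl(\Inc_{n-1,n}(\Inc_{r,n-1}(A_r))\bigr)^{\cl_n}.
\]
By Lemma~\ref{l:compatibility} (which uses the compatibility of $\ccl$ with~$\Inc$) the last expression equals $\bigl(\Inc_{n-1,n}(\Inc_{r,n-1}(A_r)^{\cl_{n-1}})\bigr)^{\cl_n}$. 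The induction hypothesis replaces $\Inc_{r,n-1}(A_r)^{\cl_{n-1}}$ by $A_{n-1}$, after which (ii) applied at level $n-1\ge r$ finishes the step.

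For (iii)$\Rightarrow$(i), one needs to show $\Inc_{m,n}(A_m)^{\cl_n}=A_n$ for all $n\ge m\ge r$. Iterating Lemma~\ref{l:Inc} gives the decomposition $\Inc_{r,n}=\Inc_{m,n}\comp\Inc_{r,m}$ for $r\le m\le n$. By (iii) one has $A_m=\Inc_{r,m}(A_r)^{\cl_m}$, so applying Lemma~\ref{l:compatibility} yields
\[
  \Inc_{m,n}(A_m)^{\cl_n}
  = \bigl(\Inc_{m,n}(\Inc_{r,m}(A_r)^{\cl_m})\bigr)^{\cl_n}
  = \bigl(\Inc_{m,n}(\Inc_{r,m}(A_r))\bigr)^{\cl_n}
  = \Inc_{r,n}(A_r)^{\cl_n},
\]
and one more application of (iii) identifies this with~$A_n$. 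Hence $\A$ stabilizes with $\ind^{\Inc}_{\ccl}(\A)\le r$.

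I do not expect any serious obstacle; the whole argument is driven by the two decompositions in Lemma~\ref{l:Inc} together with Lemma~\ref{l:compatibility}, which packages the compatibility hypothesis into the precise identity $(\Inc_{m,n}(B^{\cl_m}))^{\cl_n}=(\Inc_{m,n}(B))^{\cl_n}$ that we need. The only point that requires a small check is the base case $n=r$, which amounts to the trivial fact that $\Inc_{r,r}$ acts as the identity on~$R_r$; everything else is bookkeeping around the order of composition and closure.
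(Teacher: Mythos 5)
Your argument is correct. For (i)$\Rightarrow$(ii) and (ii)$\Rightarrow$(iii) it coincides with the paper's proof; the only cosmetic difference is the base case $n=r$, which the paper handles via the sandwich $A_r\subseteq\Inc_{r,r}(A_r)\subseteq\Inc_{r,r}(A_r)^{\cl_r}\subseteq A_r$ using $\Inc$-invariance, whereas you observe that every $\pi\in\Inc_{r,r}$ restricts to the identity on $[r]$ (since $\pi(k)\ge k$ for increasing $\pi$); both are fine. The one genuine divergence is in (iii)$\Rightarrow$(i): the paper runs an induction on $m$ combined with a sandwich argument, deducing $A_n=(\Inc_{m,n}(A_m))^{\cl_n}\subseteq(\Inc_{m+1,n}(A_{m+1}))^{\cl_n}\subseteq A_n$ from the factorization $\Inc_{m,n}=\Inc_{m+1,n}\comp\Inc_{m,m+1}$ and $\Inc$-invariance, while you compute $\Inc_{m,n}(A_m)^{\cl_n}$ directly from the regrouped decomposition $\Inc_{r,n}=\Inc_{m,n}\comp\Inc_{r,m}$ together with Lemma~\ref{l:compatibility}. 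That regrouping is not one of the two identities stated in Lemma~\ref{l:Inc}, but it follows at once from the iterated factorization $\Inc_{m,n}=\Inc_{n-1,n}\comp\dotsb\comp\Inc_{m,m+1}$ established in that lemma's proof, so the step is legitimate; it buys you a cleaner, induction-free argument (and one that never invokes $\Inc$-invariance in this direction) at the cost of one extra, easily verified identity.
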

\begin{proof}
  The implication (i) $\Rightarrow$ (ii) follows directly from the
  definition.  We prove (iii) from (ii) by induction on~$n$. First
  consider the case $n=r$. Since $\Inc_{r,r}$ contains the identity
  map, one has
  $ A_r \subseteq \Inc_{r,r}(A_r) \subseteq \Inc_{r,r}(A_r)^{\cl_r}
  \subseteq A_r$, hence $A_r=\Inc_{r,r}(A_r)^{\cl_r}$.

 Assume we have shown that $A_n=\Inc_{r,n}(A_r)^{\cl_n}$ for some
 $n\ge r$. Then $A_{n+1}$ equals
\begin{equation*}
  \Inc_{n,n+1}(A_n)^{\cl_{n+1}}
  =\Inc_{n,n+1}(\Inc_{r,n}(A_r)^{\cl_n})^{\cl_{n+1}}
  =\Inc_{n,n+1}(\Inc_{r,n}(A_r))^{\cl_{n+1}}
  =\Inc_{r,n+1}(A_r)^{\cl_{n+1}},
\end{equation*}
by (ii), the induction hypothesis, Lemma~\ref{l:compatibility}, and
Lemma~\ref{l:Inc}.

To prove (iii) $\Rightarrow$ (i), take any $n>r$.  We show that
$A_n=(\Inc_{m,n}(A_{m}))^{\cl_n}$ for all $n\geq m \geq r$ by
induction on $m$. The case $m=r$ follows from (iii). Assume
$A_n=(\Inc_{m,n}(A_{m}))^{\cl_n}$ for some $m$ with $n> m \geq r$. By
Lemma~\ref{l:Inc}, $\Inc_{m,n}=\Inc_{m+1,n}\comp \Inc_{m,m+1}$. It
follows that
\[
A_{n}
=
(\Inc_{m,n} (A_{m}))^{\cl_n}
=
(\Inc_{m+1,n}\comp \Inc_{m,m+1} (A_{m}))^{\cl_n}
\subseteq
(\Inc_{m+1,n}(A_{m+1}))^{\cl_n}
\subseteq
A_n.
\]
Hence, $A_n = (\Inc_{m+1,n}(A_{m+1}))^{\cl_n}$, which
concludes the induction argument.
\end{proof}

\subsection{Finite generation up to symmetry}
\label{section-glfg}

We apply and refine the results on local-global finite generation in
Section~\ref{sec:chain-set} to $\Sym$- and $\Inc$-invariant chains.
For this we consider a general system of closure operations
$(\ccl,\cl_\infty)$, leaving more specific discussions until later
sections.

\medskip For $\Sym$-invariant chains, apart from the local-global
principle in Corollary~\ref{c:fg-local-global}, there is one
additional local characterization.  The following result generalizes
\cite[Theorem~4.7]{AH07} and \cite[Corollary~3.7]{hillar2012finite};
see Section~\ref{sec:ideals} for more details.
\begin{theorem}
\label{thm:setFG}
Let $(\ccl,\cl_\infty)$ be a consistent system of closure operations
that is compatible with~$\Sym$.  Let $\A=(A_n)_{n\geq 1}$ be a
$\Sym$-invariant, $\ccl$-closed chain for which $A_\infty$ is
$\cl_{\infty}$-closed. Then the following statements are equivalent:
\begin{enumerate}[(a)]
\item
  (local)
  $\A$ stabilizes and is eventually finitely generated;
\item
  (local)
  There exists an $r\in\NN$ such that for all $n\ge r$ the following hold:
\begin{enumerate}[(i)]
\item
  (saturation)
  $A_\infty \cap R_{n} = A_{n}$,
\item
  (support size)
  $A_{n}$ is finitely generated by elements of support size at most $r$;
\end{enumerate}
\item
  (global) $A_\infty$ is $\Sym$-equivariantly finitely generated.
\end{enumerate}
\end{theorem}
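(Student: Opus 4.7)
The plan is to handle the three-way equivalence by first disposing of (a)$\Leftrightarrow$(c) via the abstract machinery, and then extracting (b) from the global structure together with the support-preserving nature of the $\Sym$-action. The equivalence (a)$\Leftrightarrow$(c) should follow directly from Corollary~\ref{c:fg-local-global}: the hypotheses are verified because $(\cSym,\Sym)$ is consistent by Lemma~\ref{l:Sym-Inc-consistency}(ii), locally finite by Lemma~\ref{l:Sym-Inc-locally-finite}, and compatible with $\ccl$ by assumption, while $(\ccl,\cl_\infty)$ is consistent and $A_\infty$ is $\cl_\infty$-closed by hypothesis. The corollary further tells us that, under these equivalent statements, $\A$ is eventually saturated, which will feed into the saturation clause of (b).

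For the implication (c)$\Rightarrow$(b), write $A_\infty=\Sym(G)^{\cl_\infty}$ for some finite $G$. Inspecting the argument for Theorem~\ref{th:fg-and-stable}(ii), one can pick $m$ large enough that $G\subseteq R_m$ and $m$ exceeds both the stability index and the eventual-saturation threshold from the already-proved (a); then $A_n=(\Sym_{m,n}(G))^{\cl_n}$ for every $n\ge m$. By Lemma~\ref{l:Sym-Sym}(i), every $\pi(g)$ with $\pi\in\Sym_{m,n}$ and $g\in G$ satisfies $|\supp(\pi(g))|=|\supp(g)|\le\width(g)\le m$, so $A_n$ is finitely generated by elements of support size at most $m$. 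Setting $r\defas m$ delivers both (i) and (ii).

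The implication (b)$\Rightarrow$(a) is the step I expect to require the real work. Local finite generation is already assumed, so the task is to produce stabilization. For $n\ge r$, let $A_n=G_n^{\cl_n}$ as in (b) and fix $g\in G_n$ with $\supp(g)=\{j_1<\cdots<j_s\}$, $s\le r$. Pick $\sigma\in\Sym(n)$ that sends $j_k$ to $k$ for $k=1,\dotsc,s$; then $\sigma(g)\in R_r$, and by $\Sym$-invariance $\sigma(g)\in A_n\cap R_r\subseteq A_\infty\cap R_r=A_r$ using the saturation clause (i). Hence $g=\sigma^{-1}(\sigma(g))\in\Sym(n)(A_r)=\Sym_{r,n}(A_r)$ by Lemma~\ref{l:Sym-Sym}(ii), so $G_n\subseteq\Sym_{r,n}(A_r)$ and therefore $A_n=G_n^{\cl_n}\subseteq(\Sym_{r,n}(A_r))^{\cl_n}\subseteq A_n$, the last inclusion by $\Sym$-invariance of $\A$. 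The extension to arbitrary $m$ with $r\le m\le n$ uses the elementary fact $\Sym_{r,n}(A_r)\subseteq\Sym_{m,n}(A_m)$ (obtained by factoring $\iota_{r,n}=\iota_{m,n}\comp\iota_{r,m}$), which gives $A_n=(\Sym_{m,n}(A_m))^{\cl_n}$ and thus stabilization with index at most $r$.

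The main obstacle is precisely this last move: turning a purely existential bound on support size into a concrete factorization of each generator through $\Sym_{r,n}(A_r)$. This is the only place where the ambient-chain geometry of Section~\ref{sec:ambient-chain-examples} and the fact that $\Sym$-orbits under truncation at the width cover everything (Lemma~\ref{l:truncated-orbit}(ii)) is genuinely used; the analogous statement for $\Inc$ would fail precisely because an increasing map cannot reorder the support into $[r]$. Everything else is bookkeeping built on Corollary~\ref{c:fg-local-global}.
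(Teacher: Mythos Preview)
Your proposal is correct and follows essentially the same route as the paper: the equivalence (a)$\Leftrightarrow$(c) via Corollary~\ref{c:fg-local-global}, then (a)$+$(c)$\Rightarrow$(b) using eventual saturation together with the fact that $\Sym$ preserves support size, and (b)$\Rightarrow$(a) by the key trick of permuting each small-support generator into $R_r$ and invoking saturation. The only cosmetic differences are that for (c)$\Rightarrow$(b) you work with the global generating set $G$ directly while the paper uses local generators of $A_r$ after stabilization, and you spell out the extension from $m=r$ to arbitrary $r\le m\le n$ that the paper leaves implicit in the phrase ``it suffices to show.''
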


\begin{proof}
  From Lemmas~\ref{l:Sym-Inc-consistency}(ii) and
  \ref{l:Sym-Inc-locally-finite} we know that the system of maps
  $(\cSym,\Sym)$ satisfies the remaining assumptions of
  Corollary~\ref{c:fg-local-global}.  So (a) and (c) are equivalent.
  We show the implications (a)$+$(c) $\Rightarrow$ (b) and (b)
  $\Rightarrow$ (a).

  (a)$+$(c) $\Rightarrow$ (b): Since $A_\infty$ is
  $\Sym$-equivariantly finitely generated (by (c)), it follows from
  Corollary~\ref{c:fg-local-global} that $\A$ is eventually saturated.
  Thus, there exists $r\in \NN$ such that
  $A_\infty \cap R_{n} = A_{n}$ for all $n\ge r$, which is (b)(i).  By
  (a), $\ind(\A)$ is finite and $A_n$ is finitely generated for
  $n\gg0$.  So we may assume that $r\ge\ind(\A)$ and $A_r$ is finitely
  generated by, say, $v_1,\dots,v_s\in A_r$.  Then for all $n\geq r$
  we have
  $A_n=(\Sym_{r,n}(v_1)\cup\dots \cup\Sym_{r,n}(v_s))^{\cl_n}$.  Each
  $v_t \in A_r\subseteq R_r$ has support size at most~$r$.  Since
  $\Sym_{r,n}$ is finite and its elements do not change the support
  size of $v_t$ by Lemma~\ref{l:Sym-Sym}(i), we conclude~(b)(ii).

  (b) $\Rightarrow$ (a): Assuming $r$ as in (b), it suffices to show
  that for all $n\geq r$, $ (\Sym_{r,n}(A_r))^{\cl_n} = A_n$.  This
  is trivially true for $n=r$. Let $n>r$.  By (b)(ii), $A_n$ is
  generated by some $u_1,\dots,u_p \in R_n$ of support size at
  most~$r$.  We want to show that
  $u_{1},\dots, u_{p} \in (\Sym_{r,n}(A_r))^{\cl_n}$.  For any
  $t\in[p]$, $|\supp(u_t)|\leq r<n$ and thus there exists a
  $\sigma \in \Sym(n)$ such that $\sigma(u_t)\in R_r$.  Hence,
  $\sigma(u_t)\in R_{r} \cap A_\infty = A_{r}$, using (b)(i).  Then by
  Lemma~\ref{l:Sym-Sym}(ii),
  \[
    u_t =\sigma^{-1}(\sigma(u_t))\in \Sym(n) (A_{r})=\Sym_{r,n}
    (A_{r}) \subseteq (\Sym_{r,n} (A_{r}))^{\cl_n}.\qedhere
  \]
\end{proof}


For $\Inc$-invariant chains we obtain a weaker version of
Theorem~\ref{thm:setFG}. The polynomial ring case of this result and
its relation to \cite[Theorem~3.6]{hillar2012finite} are discussed in
Section~\ref{sec:ideals}.

\begin{theorem}
\label{t:local-global-Inc}
Let $(\ccl,\cl_\infty)$ be a consistent system of closure operations
so that $\ccl$ is compatible with~$\Inc$.  Let $\A=(A_n)_{n\geq 1}$ be
an $\Inc$-invariant, $\ccl$-closed chain such that the limit set
$A_\infty$ is $\cl_{\infty}$-closed.  Consider the following
statements:
\begin{enumerate}[(a)]
\item
  (local)
  $\A$ stabilizes and is eventually finitely generated.
\item
  (global) $A_\infty$ is $\Inc$-equivariantly finitely generated.
\end{enumerate}
Then (a) implies (b) and if $\A$ is eventually saturated, then (a) and
(b) are equivalent.
\end{theorem}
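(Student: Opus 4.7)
The plan is to derive both implications from the general results of Section~\ref{sec:chain-set}. For the implication (a) $\Rightarrow$ (b), I would invoke Theorem~\ref{th:fg-and-stable}(i) directly: $(\ccl,\cl_\infty)$ is consistent by hypothesis, $(\cInc,\Inc)$ is weakly consistent by Lemma~\ref{l:Sym-Inc-consistency}(i), $\ccl$ and $\cInc$ are compatible by hypothesis, and $A_\infty$ is $\cl_\infty$-closed by hypothesis. Thus all required assumptions are in place, and (b) follows from (a).

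For the converse, the subtlety is that $(\cInc,\Inc)$ is \emph{not} consistent in general, so Theorem~\ref{th:fg-and-stable}(ii) does not apply directly. Instead I would invoke Proposition~\ref{pr:relaxation}. Concretely, I need to produce a finite set $G\subseteq A_\infty$ with $A_\infty=\Inc(G)^{\cl_\infty}$ and $\Inc(G)\cap R_n\subseteq \Inc_{m,n}(A_m)$ for all $n\ge m\gg 0$. The first condition is exactly hypothesis (b). For the second, I choose $m$ so large that $G\subseteq A_m$ and, using the eventual saturation hypothesis, $A_k=A_\infty\cap R_k$ for every $k\ge m$.

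Given $u\in\Inc(G)\cap R_n$, write $u=\pi(g)$ with $g\in G$ and $\pi\in\Inc$, and set $w=\width(g)\le m$. Then $\pi\restr{[w]}\colon[w]\to[n]$ is an order-preserving injection with $\pi(w)\le n$, since $\width(u)=\pi(w)\le n$. The main technical step, which I expect to be the primary obstacle, is the factorization $\pi\restr{[w]}=\pi'\restr{[m]}\comp\tau\restr{[w]}$ with $\tau\in\Inc_{w,m}$ and $\pi'\in\Inc_{m,n}$. To produce it, pick any $m$-element subset $T\subseteq[n]$ containing $\pi([w])$; this exists because $w\le m\le n$. Let $\pi'\restr{[m]}$ be the unique order-preserving bijection $[m]\to T$, extended arbitrarily to $\pi'\in\Inc_{m,n}$, and let $\tau\restr{[w]}\colon[w]\to[m]$ be the order-preserving injection whose image is the preimage of $\pi([w])$ in $[m]$ under $\pi'\restr{[m]}$, extended to $\tau\in\Inc_{w,m}$. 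By construction $\pi'(\tau(k))=\pi(k)$ for every $k\in[w]$, so $\pi'(\tau(g))=\pi(g)=u$.

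It remains to place $\tilde g:=\tau(g)$ in $A_m$. Since $G\subseteq A_\infty$ and $A_\infty$ is $\Inc$-invariant by Lemma~\ref{l:invariant}(i), $\tau(g)\in A_\infty$; moreover $\width(\tau(g))=\tau(w)\le m$, so $\tilde g\in A_\infty\cap R_m=A_m$ by the choice of $m$. Hence $u=\pi'(\tilde g)\in\Inc_{m,n}(A_m)$, which establishes the required inclusion, and Proposition~\ref{pr:relaxation} produces (a) from (b).
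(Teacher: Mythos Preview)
Your proof is correct and follows the same overall scaffolding as the paper: both directions are reduced to Theorem~\ref{th:fg-and-stable}(i) and Proposition~\ref{pr:relaxation}, respectively, with Lemma~\ref{l:Sym-Inc-consistency}(i) supplying weak consistency of $(\cInc,\Inc)$. One small omission: Proposition~\ref{pr:relaxation} also requires $\lpi$ to be locally finite, so you should cite Lemma~\ref{l:Sym-Inc-locally-finite}.

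Where your argument genuinely diverges from the paper is in how the inclusion $\Inc(G)\cap R_n\subseteq \Inc_{m,n}(A_m)$ is established. The paper first replaces $\A$ by its saturation $\ol\A$ (via Lemma~\ref{l:invariant}(ii)), then for each generator $w_t$ sets $k=\width(w_t)$, invokes Lemma~\ref{l:truncated-orbit}(i) to get $\Inc(w_t)\cap R_n=\Inc_{k,n}(w_t)$, and finally uses the decomposition Lemma~\ref{l:Inc} inductively to push $\Inc_{k,n}(A_k)$ into $\Inc_{m,n}(A_m)$. The crucial use of saturation there is at the \emph{small} index $k$ (to get $w_t\in A_k$), which is why the reduction to a fully saturated chain is needed up front. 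You instead factor $\pi\restr{[w]}$ explicitly through $[m]$, apply $\Inc$-invariance of $A_\infty$ (Lemma~\ref{l:invariant}(i)) to land $\tau(g)$ in $A_\infty$, and then invoke saturation only at the \emph{large} index~$m$. This lets you work directly under the eventual-saturation hypothesis without the reduction step, and your hands-on factorization replaces the appeal to Lemmas~\ref{l:truncated-orbit} and~\ref{l:Inc}. The paper's route is more modular (it reuses prepared lemmas), while yours is more self-contained and makes clearer why only eventual saturation is really needed.
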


\begin{proof}
  That (a) implies (b) follows immediately from
  Theorem~\ref{th:fg-and-stable}(i) and
  Lemma~\ref{l:Sym-Inc-consistency}(i).

  To prove the equivalence, we can assume that $\A$ is saturated.  To
  see this, replace $\A$ by its saturation~$\ol\A$.  Then (b) and the
  assumptions of the theorem are still satisfied by
  Lemma~\ref{l:invariant}(ii); (a) remains unchanged since $\A $ and
  $\ol\A$ coincide eventually (because $\A$ is eventually
  saturated). Therefore, it is harmless to assume that $\A$ is
  saturated.

  We prove (b) $\Rightarrow$ (a) using
  Proposition~\ref{pr:relaxation}.  To verify its assumptions, use
  Lemma~\ref{l:Sym-Inc-locally-finite} for the local finiteness of
  $\Inc$.  Then by (b), there exist $w_1,\dots,w_s\in A_\infty$ such
  that
  \[
    A_\infty=(\Inc(w_1)\cup\dots\cup \Inc(w_s))^{\cl_{\infty}}.
  \]
  Choose $m$ so that $w_t \in A_m$ for all $t\in [s]$.  To apply
  Proposition~\ref{pr:relaxation}, it remains to show that
  \begin{equation}
    \label{eq:Inc-inclusion}
    \Inc(w_t)\cap R_n\subseteq \Inc_{m,n}(A_{m})\
    \text{ for all $n\ge m$ and $t\in [s]$.}
  \end{equation}
  Let $k=k_t\defas\width(w_t)$.  It is evident that $k\le m$ and
  $w_t \in A_\infty\cap R_{k}=A_{k}$, where we used that $\A$ is
  saturated.  By Lemma~\ref{l:truncated-orbit}(i),
  $\Inc(w_t)\cap R_n = \Inc_{k,n}(w_t) \subseteq \Inc_{k,n}(A_{k})$.
  To prove \eqref{eq:Inc-inclusion} it suffices to show that
  \[
    \Inc_{k,n}(A_{k}) \subseteq \Inc_{m,n}(A_m) \ \text{ for all } k\le m.
  \]
  By Lemma~\ref{l:Inc}, $\Inc_{k,n}=\Inc_{k+1,n}\comp \Inc_{k,k+1}$, and
  hence
  \[
    \Inc_{k,n}(A_{k}) = (\Inc_{k+1,n}\comp \Inc_{k,k+1}) (A_{k})
    \subseteq \Inc_{k+1,n}(A_{k+1}).
  \]
  The proof finishes with a finite induction.
\end{proof}

\begin{remark}
\label{r:extension}
Comparing Theorems~\ref{thm:setFG} and~\ref{t:local-global-Inc},
the following questions are natural:
\begin{enumerate}[(1)]
\item Can the assumption that $\A$ is eventually saturated in
  Theorem~\ref{t:local-global-Inc} be omitted?
\item Does there exist a characterization for $\Inc$-invariant chains
  similar to Theorem~\ref{thm:setFG}(b)? Precisely, when $\A$ is
  eventually saturated, is it true that $A_\infty$ is
  $\Inc$-equivariantly finitely generated if $\A$ is eventually
  finitely generated by elements of bounded support size?
\end{enumerate}
These questions can be affirmatively answered for chains of ideals
(Theorem~\ref{thm:Inc-ideal-stable}), but in general both have
negative answers as we show in Examples~\ref{ex:Inc-not-stable}
and~\ref{ex:Inc-support-size}.
\end{remark}

\section{Cones and monoids up to symmetry}
\label{section-cones-monoids}

In this section we specialize the results of the previous section to
invariant chains of convex cones and monoids.  We provide various
examples to demonstrate that some assumptions of our results are
indispensable and give counterexamples to potential strengthenings.

The results of
this section have recently been employed to extend foundational
results in polyhedral geometry to the equivariant setting~\cite{LR}.
For standard terminology on cones and monoids the reader is referred
to \cite{bruns2009book} and~\cite{ziegler1995book}.  
\subsection{Cones up to symmetry}
We consider convex cones in the nonnegative orthant
$\RR^{(\NN)}_{\geq 0}$ of~$\RR^{(\NN)}$.  That is, the ambient chain
is $\R=(R_{n})_{n\geq 1}$ with $R_n=\RR^{n}_{\geq 0}$ for all $n\ge 1$
and $R_\infty = \bigcup_{n\ge1}\RR^{n}_{\geq 0}=\RR^{(\NN)}_{\geq 0}$.
To consider convex cones, the closure operations are
$\cncl_n=\cone(\cdot)$ in $\RR^{n}_{\geq 0}$ for $n\geq 1$ and
$\cncl_\infty=\cone(\cdot)$ in $R_{\infty}$, where $\cone(A)$ consists
of finite nonnegative linear combinations from~$A$, that is,
$\cone(A)=\{\sum_{i=1}^k\lambda_ia_i\mid k\in\NN,\ a_i\in A,\
\lambda_i\in\RR_{\geq 0}\}$.
We restrict our attention to cones in $R_{\infty}$ instead of the
whole space $\RR^{(\NN)}$, since the system of conical hulls
$(\ccn,\cncl_\infty)$ is consistent on $R_{\infty}$, but not
on~$\RR^{(\NN)}$.

\begin{lemma}
\label{l:cone-consistency}
Consider the ambient chain $\R=(\RR^{n}_{\geq 0})_{n\ge1}$ of
nonnegative orthants as above. Then the system of conical hulls
$(\ccn,\cncl_\infty)$ is consistent.
\end{lemma}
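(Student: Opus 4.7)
The goal is to verify the equality
\[
(A \cap \RR^n_{\geq 0})^{\cncl_n} = A^{\cncl_\infty} \cap \RR^n_{\geq 0}
\quad \text{for all } n\ge 1 \text{ and } A\subseteq \RR^{(\NN)}_{\geq 0},
\]
i.e.\ $\cone(A \cap \RR^n_{\geq 0}) = \cone(A) \cap \RR^n_{\geq 0}$.

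The inclusion $\subseteq$ is immediate: any nonnegative linear combination of vectors from $A \cap \RR^n_{\geq 0}$ lies in $\cone(A)$ (since it uses vectors from $A$) and in $\RR^n_{\geq 0}$ (since the latter is closed under nonnegative linear combinations). I would dispense with this in one sentence.

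The substance of the proof is the reverse inclusion, and the plan is to exploit nonnegativity in an essential way. Take $v \in \cone(A) \cap \RR^n_{\geq 0}$ and write
\[
v = \sum_{i=1}^{k} \lambda_i a_i \quad \text{with } \lambda_i \geq 0 \text{ and } a_i \in A \subseteq \RR^{(\NN)}_{\geq 0}.
\]
Without loss of generality assume every $\lambda_i > 0$. Since $v \in \RR^n_{\geq 0}$, its $j$-th coordinate vanishes for every $j > n$, giving
\[
0 = v_j = \sum_{i=1}^{k} \lambda_i (a_i)_j \quad \text{for all } j > n.
\]
Every summand on the right is nonnegative (using $\lambda_i > 0$ and $(a_i)_j \geq 0$), so each term must vanish; hence $(a_i)_j = 0$ for all $i$ and all $j > n$. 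Thus $a_i \in \RR^n_{\geq 0}$ for every $i$, so $a_i \in A \cap \RR^n_{\geq 0}$, and therefore $v \in \cone(A \cap \RR^n_{\geq 0})$.

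There is no real obstacle here — the cancellation argument is what drives everything, and it is precisely this step that would fail on $\RR^{(\NN)}$ in place of $\RR^{(\NN)}_{\geq 0}$, since positive and negative contributions to coordinates beyond $n$ could cancel without each summand vanishing. This also explains the remark in the text that the system of conical hulls is consistent on $R_\infty$ but not on the whole space, so the restriction to the nonnegative orthant is essential rather than cosmetic.
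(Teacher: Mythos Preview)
Your proof is correct and follows essentially the same approach as the paper: both dispatch the $\subseteq$ inclusion immediately and, for the reverse, use that a nonnegative combination of nonnegative vectors lying in $\RR^n_{\geq 0}$ forces each contributing vector to lie in $\RR^n_{\geq 0}$. The paper phrases this via $\width(a_i)\le\width(v)\le n$, while you spell out the coordinate-wise cancellation explicitly, but the argument is the same.
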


\begin{proof}
  For any $A\subseteq \RR^{(\NN)}_{\geq 0}$ and $n\in\NN$ we need to
  show that
  $\cncl_n(A\cap\RR^{n}_{\geq 0})=\cncl_\infty(A)\cap\RR^{n}_{\geq
    0}$.  The inclusion ``$\subseteq$'' is obviously true. For the
  reverse inclusion, take $v\in\cncl_\infty(A)\cap\RR^{n}_{\geq
    0}$. Then $\width(v)\le n$ and there exist $a_1,\dots,a_k\in A$
  and $\lambda_1,\dots,\lambda_k>0$ such that
  $ v=\sum_{i=1}^k\lambda_ia_i$.  By positivity of the $\lambda_{i}$,
  and since $a_i\in \RR^{(\NN)}_{\geq 0}$ for all $i\in[k]$, we have
  $\width(a_i)\le\width(v)\le n$.  Thus $a_i\in A\cap\RR^{n}_{\geq 0}$
  for all $i\in[k]$, and hence
  $v=\sum_{i=1}^k\lambda_ia_i\in \cncl_n(A\cap\RR^{n}_{\geq 0}).$
\end{proof}

It is easy to give examples showing that $(\ccn,\cncl_\infty)$ is not
consistent on $\RR^{(\NN)}$.

\begin{example}
  \label{ex:cone-inconsistency}
  Let $A=\{(1,1),(1,-1)\}\subset\RR^2$.  Identifying $\RR$ with
  $\RR\times\{0\}\subseteq\RR^2$, one has
  \[
    \cncl_1(A\cap\RR)=\cncl_1(\emptyset)=\{0\}\subsetneq
    \RR_{\ge0}=\cncl_2(A)\cap\RR.
  \]
  By Remark~\ref{r:local-consistency}, this means that the system
  $(\ccn,\cncl_\infty)$ is not consistent if we choose the ambient
  chain with $R_n=\RR^n$ for all $n\ge1$.
\end{example}

The next result shows that the chain $\ccn$ of conical hulls is
compatible with $\Sym(\infty)$ and $\Inc$, even when one considers the
whole space $\RR^{(\NN)}$.

\begin{lemma}
\label{l:cone-compatibility}
Let $\Pi=\Sym(\infty)$ or $\Pi=\Inc$. Then for any $m\le n$ and
$A_m\subseteq\RR^m$ one has
$\Pi_{m,n}(\cncl_m(A_m))\subseteq \cncl_n(\Pi_{m,n}(A_m))$.  Thus,
with $\R=(\RR^{n}_{\geq 0})_{n\ge1}$, $\ccn$ is compatible with~$\Pi$.
\end{lemma}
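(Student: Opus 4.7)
The plan is to observe that each element of $\Pi_{m,n}$, whether $\pi \in \Sym_{m,n}$ or $\pi \in \Inc_{m,n}$, induces a \emph{linear} map $\RR^m \to \RR^n$, and that linear maps commute with the formation of nonnegative linear combinations. This makes the inclusion essentially automatic.

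First I would invoke Lemma~\ref{l:Inc_and_Sym}(ii): every $\pi \in \Inc_{m,n}$ restricts to an element $\ol\pi \in \oInc_{m,n} \subseteq \Sym_{m,n}$, and the induced map $\RR^m \to \RR^n$ (as described in \eqref{eq:pi}) coincides with $\ol\pi$. Thus it suffices to treat the case $\Pi = \Sym$. For $\pi = \sigma\comp\iota_{m,n} \in \Sym_{m,n}$ with $\sigma \in \Sym(n)$, the map $\RR^m \to \RR^n$ decomposes as the coordinate embedding~\eqref{eq:iota} followed by the coordinate permutation~\eqref{eq:sigma}; both are $\RR$-linear, so $\pi$ itself is linear.

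Now take any $v \in \cncl_m(A_m) = \cone(A_m)$. By definition, $v = \sum_{i=1}^{k} \lambda_i a_i$ for some $a_i \in A_m$ and $\lambda_i \in \RR_{\ge 0}$. Applying $\pi$ and using linearity gives
\[
\pi(v) \;=\; \sum_{i=1}^{k} \lambda_i \, \pi(a_i).
\]
Since each $\pi(a_i) \in \Pi_{m,n}(A_m)$ and $\lambda_i \ge 0$, the right-hand side lies in $\cone(\Pi_{m,n}(A_m)) = \cncl_n(\Pi_{m,n}(A_m))$. As $v$ was arbitrary, $\Pi_{m,n}(\cncl_m(A_m)) \subseteq \cncl_n(\Pi_{m,n}(A_m))$, as required.

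For the final assertion, with the ambient chain $\R = (\RR^n_{\ge 0})_{n \ge 1}$, the same argument applies verbatim to subsets $A_m \subseteq \RR^m_{\ge 0}$ (all intermediate elements stay in the nonnegative orthant since the $\lambda_i$ and coordinates are nonnegative, and $\pi$ preserves nonnegativity). This is exactly the compatibility condition of Definition~\ref{d:compatibility} for $\ccn$ and $\lpi$. There is no real obstacle here; the only subtlety is simply to recognize that $\Sym$- and $\Inc$-actions are restrictions of linear maps on the ambient $\RR^n$, which is built into their definition via~\eqref{eq:pi}.
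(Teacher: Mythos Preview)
Your proof is correct and follows essentially the same approach as the paper: both arguments hinge on the observation that each $\pi\in\Pi_{m,n}$ acts linearly on $\RR^m$, so that $\pi(\sum_i\lambda_i a_i)=\sum_i\lambda_i\pi(a_i)$, placing $\pi(v)$ in $\cncl_n(\Pi_{m,n}(A_m))$. The paper verifies this linearity by an explicit coordinate computation via~\eqref{eq:pi}, whereas you invoke it by noting that $\pi$ is a composition of the linear maps~\eqref{eq:iota} and~\eqref{eq:sigma} (after reducing $\Inc$ to $\Sym$ via Lemma~\ref{l:Inc_and_Sym}(ii)); these are cosmetic differences only.
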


\begin{proof}
  Let $\pi\in\Pi_{m,n}$ and
  $v=\sum_{i=1}^k\lambda_ia_i\in \cncl_m(A_m)$
  with $a_i\in A_m$ and $\lambda_i\ge0$.
  We expand $a_i=\sum_{j=1}^ma_{ij}\epsilon_{j}$ in the canonical
  basis.  Then by~\eqref{eq:pi},
  \[
    \pi(v)
    =
    \pi\Big(\sum_{j=1}^m\big(\sum_{i=1}^k\lambda_ia_{ij}\big)\epsilon_{j}\Big)
    =
    \sum_{j=1}^m\big(\sum_{i=1}^k\lambda_ia_{ij}\big)\epsilon_{\pi(j)}
    =
    \sum_{i=1}^k\lambda_i\pi(a_i).
  \]
  This yields $\pi(v)\in\cncl_n(\Pi_{m,n}(A_m))$, as desired.
\end{proof}

It is obvious that
for an increasing chain $\C=(C_{n})_{n\geq 1}$ of convex cones with
$C_{n} \subseteq \RR^{n}$ for all $n\ge1$, the limit set $C_\infty$ is
a convex cone.
With Lemmas~\ref{l:cone-consistency} and \ref{l:cone-compatibility} in
place, we are ready to apply the results of the previous section to
chains of cones.  At first we formulate the following version of
Theorem~\ref{thm:setFG}.

\begin{corollary}
  \label{c:globallocalcones}
  Let $\C=(C_{n})_{n\geq 1}$ be a $\Sym$-invariant chain of convex cones
  $C_{n} \subseteq \RR^{n}_{\geq 0}$ with limit cone~$C_\infty$.  Then
  the following statements are equivalent:
  \begin{enumerate}[(a)]
  \item
    $\C$ stabilizes and is eventually finitely generated;
  \item
    There exists an $r\in\NN$ such that for all $n\ge r$ the following hold:
    \begin{enumerate}[(i)]
    \item
      $C_\infty \cap \RR^{n}_{\geq 0} = C_{n}$,
    \item
      $C_{n}$ is finitely generated by elements of support size at most $r$;
    \end{enumerate}
  \item
    $C_\infty$ is $\Sym$-equivariantly finitely generated.
  \end{enumerate}
\end{corollary}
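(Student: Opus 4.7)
The plan is to derive this corollary as a direct application of Theorem~\ref{thm:setFG}, with the system of closure operations taken to be $(\ccn, \cncl_\infty)$ consisting of the conical hull maps, and the system of maps taken to be $(\cSym, \Sym)$. I only need to verify that all hypotheses of Theorem~\ref{thm:setFG} hold in this specialization; once that is done, statements (a), (b), (c) of the corollary are precisely the specializations of the three equivalent conditions in the theorem.

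First I would verify the hypotheses on the closure system. Consistency of $(\ccn, \cncl_\infty)$ is exactly the content of Lemma~\ref{l:cone-consistency}, and compatibility of $\ccn$ with $\Sym$ is Lemma~\ref{l:cone-compatibility}. Since by assumption each $C_n \subseteq \RR^n_{\ge 0}$ is a convex cone, the chain $\C$ is $\ccn$-closed, and by hypothesis it is $\Sym$-invariant. The only remaining point is to check that the limit $C_\infty = \bigcup_{n\ge 1} C_n$ is $\cncl_\infty$-closed, i.e.\ a convex cone in $\RR^{(\NN)}_{\ge 0}$; this is immediate from the fact that any finite collection of elements of $C_\infty$ already lies in some common $C_n$, and any nonnegative linear combination of elements of $C_n$ remains in $C_n$.

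Once these hypotheses are in place, Theorem~\ref{thm:setFG} applies verbatim with $(\ccl, \cl_\infty) = (\ccn, \cncl_\infty)$ and $\A = \C$. Its clause (a) specializes to statement (a) of the corollary, clause (b) to statement (b) (both the saturation condition $C_\infty \cap \RR^n_{\ge 0} = C_n$ and the bounded-support-size generation), and clause (c) to statement (c), namely $\Sym$-equivariant finite generation of~$C_\infty$.

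I do not foresee any substantive obstacle, since the work has already been carried out in the general framework of Section~\ref{section-sets}; the only step that requires a direct (and trivial) verification is the $\cncl_\infty$-closedness of $C_\infty$, and all other inputs are supplied by Lemmas~\ref{l:cone-consistency} and~\ref{l:cone-compatibility}.
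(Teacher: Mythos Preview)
Your proposal is correct and matches the paper's approach exactly: the paper presents this corollary as a direct specialization of Theorem~\ref{thm:setFG}, citing Lemmas~\ref{l:cone-consistency} and~\ref{l:cone-compatibility} for the required consistency and compatibility, and noting separately that the limit of an increasing chain of convex cones is a convex cone (i.e.\ $\cncl_\infty$-closed). No further argument is given or needed.
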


We now give several examples that demonstrate the necessity of the
conditions in Corollary~\ref{c:globallocalcones}.  The first shows
that in (a) and (b) the word ``eventually'' cannot be omitted.

\begin{example}
\label{e:limitconevschain-beginning}
Consider the chain $\C=(C_n)_{n\geq 1}$ with $C_{1} = \{0\},$
\[
C_{2} =\{ (x,y)\in \RR^2_{\geq 0}\mid x>0, \ y>0\} \cup \{(0,0)\}
\ \text{ and }\
C_{n} = \RR^n_{\geq 0}\ \text{ for } n\ge 3.
\]
Then $\C$ stabilizes as a $\Sym$-invariant chain of convex cones.  The
limit cone
$C_\infty = \RR^{(\NN)}_{\geq 0} = \cncl_\infty(\Sym (\epsilon_1))$ is
$\Sym$-equivariantly generated by the first basis vector.  However,
this does not imply that all cones $C_n$ are finitely generated, since
$C_2$ is not.  Additionally
$C_\infty\cap \RR^2_{\geq 0}=\RR^2_{\geq 0} \neq C_2$. Thus, $\C$ is
not saturated.
\end{example}

The only insight in this example is that finite subsequences can
often be changed rather wildly without changing the limit~$C_\infty$.
This suggests that properties of $C_\infty$ can only be related to the
tail of the chain, or alternatively, to the unique saturated chain
defining it.

The following example shows that for the implication ``(a)
$\Rightarrow$ (c)'' in Corollary~\ref{c:globallocalcones} it is
necessary to assume that the cones are eventually finitely generated.

\begin{example}
\label{e:counterex-fg}
Consider the chain $\C=(C_n)_{n\geq 1}$ with $C_{1} = \{0\}$,
\[
C_{2} =\{ (x,y)\in \RR^2_{\geq 0}\mid x>0, \ y>0\} \cup \{(0,0)\}
\ \text{ and  }\
C_{n} = \cncl_n({\Sym}_{2,n}(C_2))
 \ \text{ for } n\ge 3.
\]
By construction, $\C$ stabilizes as a $\Sym$-invariant chain of convex
cones.  However, $C_\infty$ is not $\Sym$-equivariantly finitely
generated, since then $C_\infty\cap \RR^2_{\geq 0}$ would be finitely
generated.  But $C_\infty\cap \RR^2_{\geq 0}=C_2$ because $C_2$
contains exactly the elements of $C_\infty$ of width at most two, and
$C_2$ is evidently not finitely generated.
\end{example}

We now show that in Corollary~\ref{c:globallocalcones}(b) both (i) and
(ii) are necessary and independent. 

\begin{example}
\label{e:no-global-fg-Cone1}
For $n\ge 2$ let $v_n=(n,1,0,\dotso,0)\in \RR^{n}_{\geq 0}$. Consider
the chain $\C=(C_n)_{n\geq 1}$ with $C_{1} = \{0\}$,
$C_2 = \cncl_2(\Sym(2)(v_2))$ and
\[
  C_n =
  \cncl_n\big(\Sym(n)\big(\iota_{n-1,n}(C_{n-1})\cup\{v_n\}\big)\big)
  \subseteq \RR^{n}_{\geq 0} \ \text{ for }\ n\ge 3.
\]
Then $\C$ is a $\Sym$-invariant chain of convex cones and the
following hold:
\begin{enumerate}
\item All cones $C_n$ are pointed, rational and finitely generated (as
  a cone) by one element of support size two up to symmetry.  More
  precisely, for $n\geq 2$ we have $C_n = \cncl_n(\Sym(n) (v_n))$.
  In particular, the orbits of all generators of $C_m$ in $C_{n}$ are
  redundant whenever $m<n$.
\item $C_\infty$ is not $\Sym$-equivariantly finitely generated.
\item $C_\infty \cap \RR^n_{\geq 0} \neq C_n$ for all $n\geq 2$.
\end{enumerate}

\begin{proof}
  (i) By definition,
  $\cncl_{n}(\Sym_{n-1,n}(C_{n-1})) \subseteq C_{n}$ for $n\geq 2$.
  Thus, $\C$ is indeed a $\Sym$-invariant chain of pointed and
  rational convex cones. Let
  $ \widetilde{C}_n=\cncl_n(\Sym(n) (v_n)) $ for $n\ge2$.  Clearly,
  $\widetilde{C}_n \subseteq C_n$ for every $n\geq 2$.  We show by
  induction that equality holds, which is trivially true for $n = 2$.
  Let $n > 3$ and assume that ${C}_{n-1} = \widetilde C_{n-1}$.  It
  suffices to prove that
\begin{equation}
 \label{eq:cone-inclusion}
 \cncl_n(\Sym_{n-1,n}(C_{n-1}))
\subseteq \widetilde{C}_n.
\end{equation}
Using Lemma~\ref{l:cone-compatibility} and the fact that
$\Sym_{n-1,n} = \Sym_{n-1,n}\comp \Sym(n-1)$ one has
\begin{align*}
\cncl_n(\Sym_{n-1,n}(C_{n-1}))&=
  \cncl_n(\Sym_{n-1,n}(\cncl_{n-1}(\Sym(n-1) (v_{n-1}))))\\
  &\subseteq \cncl_n(\Sym_{n-1,n}(\Sym(n-1) (v_{n-1})))\\
  &= \cncl_n(\Sym_{n-1,n}(v_{n-1}))
  = \cncl_n(\Sym(n)(\iota_{n-1,n}(v_{n-1}))).
\end{align*}
Since $v_{n-1}\in C_{n-1}$, this implies
\begin{equation}
 \label{eq:cone-equality}
 \cncl_n(\Sym_{n-1,n}(C_{n-1}))= \cncl_n(\Sym(n)(\iota_{n-1,n}(v_{n-1}))).
\end{equation}
So for \eqref{eq:cone-inclusion} we only need to show that
$\iota_{n-1,n}(v_{n-1})\in \widetilde{C}_n$.  Applying $\iota_{2,n}$
to
\[
  (n-1,1)=\frac{n^2-n-1}{n^2-1} (n,1)+\frac{1}{n^2-1} (1,n)
\]
we obtain
\[
  \iota_{n-1,n}(v_{n-1})=\frac{n^2-n-1}{n^2-1} v_n+\frac{1}{n^2-1} \sigma(v_n),
\]
where $\sigma\in\Sym(n)$ is the transposition~$(1\,2)$. Hence,
$\iota_{n-1,n}(v_{n-1})\in \cncl_n(\Sym(n) (v_n))=\widetilde{C}_n$,
as desired.  So
${C}_{n} = \widetilde C_{n}$ for $n\geq 2$ and the orbits of all
generators of $C_{n-1}$ in $C_{n}$ are redundant.  It follows that the same is
true for the cones $C_m$ and~$C_n$, whenever $m<n$.

(ii) We claim that $\C$ does not stabilize. Indeed, it suffices to
show that
\begin{equation*}
  v_n \in C_n \setminus \cncl_n(\Sym_{n-1,n}(C_{n-1}))
\ \text{ for all }\ n\geq 2.
\end{equation*}
Assume the contrary.  Then $v_n \in \cncl_n(\Sym_{n-1,n}(C_{n-1}))$
for some $n\ge 2$. So by \eqref{eq:cone-equality}, there exist
$\lambda_1,\dots,\lambda_k>0$ and $\sigma_1,\dots,\sigma_k\in\Sym(n)$
such that
\begin{equation}
 \label{eq:cone-stable}
 v_n=\sum_{i=1}^k\lambda_i\sigma_i(\iota_{n-1,n}(v_{n-1})).
\end{equation}
By the positivity of the $\lambda_{i}$,
$\width(\sigma_i(\iota_{n-1,n}(v_{n-1})))\le 2$ for all $i\in[k]$.
It follows that
\[
 \sigma_i(\iota_{n-1,n}(v_{n-1}))=(n-1,1,0,\dots,0)
 \ \text{ or }\
 \sigma_i(\iota_{n-1,n}(v_{n-1}))=(1,n-1,0,\dots,0)
\]
for all $i\in[k].$ This together with \eqref{eq:cone-stable} yields
the existence of $\lambda,\mu\ge0$ such that
\[
 (n,1)=\lambda(n-1,1)+\mu(1,n-1).
\]
But this is impossible. Hence, the chain $\C$ does not stabilize.

By Corollary~\ref{c:globallocalcones}, $C_\infty$ is not
$\Sym$-equivariantly finitely generated.  One can also show that
$C_\infty$ is the subset of $\RR^{(\NN)}_{\geq 0}$ consisting of $0$
and all elements of support size at least two, leading to the
following explicit description:
\[
  C_\infty =
  \RR^{(\NN)}_{\geq 0}
  \setminus
  \Big(\bigcup_{n\geq 1}
  \RR_{>0}\,\epsilon_n\Big).
\]

(iii) Since $\width(v_{n+1})=2$, we have
$ v_{n+1}\in C_\infty \cap \RR^{2}_{\geq 0} \subseteq C_\infty \cap
\RR^{n}_{\geq 0}$ for $n\geq 2$.  On the other hand, it has been shown
in (ii) that $v_{n+1}\not\in C_n$.  Hence,
$ C_\infty \cap \RR^{n}_{\geq 0} \neq C_n \ \text{ for all }\ n\geq
2$.
\end{proof}
\end{example}

The next example gives a chain with no global bound for the support
sizes of generators.

\begin{example}
\label{e:no-global-fg-Cone2}
For $n\ge 2$, let $w_n=(n-1,1,\dotso,1)\in \RR^{n}_{\geq 0}$.
Consider the chain $\C=(C_n)_{n\geq 1}$ with $C_{1} = \{0\}$,
$C_2 = \cncl_2(\Sym(2)(w_2))$ and
\[
  C_n =
  \cncl_n\big(\Sym(n)\big(\iota_{n-1,n}(C_{n-1})\cup\{w_n\}\big)\big)
  \subseteq \RR^{n}_{\geq 0} \ \text{ for }\ n\ge 3.
\]
Then $\C$ is a $\Sym$-invariant chain of convex cones and the
following hold:
\begin{enumerate}
\item All cones $C_n$ are pointed, rational and finitely generated (as
  cones), but there exists no global bound for the support sizes of
  generators of the cones.
\item $C_\infty$ is not $\Sym$-equivariantly finitely generated.
\item $C_\infty \cap \RR^n_{\geq 0} = C_n$ for all $n\geq 2$.
\end{enumerate}

\begin{proof}
  (i) As for Example~\ref{e:no-global-fg-Cone1}(i) we see that $\C$ is
  a $\Sym$-invariant chain of pointed, rational and finitely generated
  convex cones. An easy induction shows that $C_n$ is generated by
  \[
    G_n=\{\Sym_{m,n}(w_m)\mid 2\le m\le n\}.
  \]
  In this generating set, $w_n$ is irredundant for $C_n$ whenever
  $n\ge3$.  To see this, consider the supporting hyperplane
  $H=\{(z_1,\dots,z_n)\in\RR^n\mid z_1=\sum_{l=2}^n z_l\}$ of~$C_n$.
  If $w_n$ were redundant, there would exist
  $\lambda_1,\dots,\lambda_k>0$ and
  $u_1,\dots,u_k\in G_n\setminus\{w_n\}$ such that
  $w_n=\sum_{i=1}^k\lambda_iu_i$.  Since $w_n$ lies on the supporting
  hyperplane~$H$, it follows that $u_i\in H$ for all $i=1,\dots,n$.
  Thus, each $u_i$ must have $m-1$ as its first entry and exactly
  $(m-1)$ entries $1$ as the remaining nonzero entries for some $m<n$.
  Comparing the second entry in $w_n=\sum_{i=1}^k\lambda_iu_i$, we
  find $\sum_{i=1}^k\lambda_i\leq 1$.  Now comparing the first entry
  yields
  $n-1=\sum_{i=1}^k\lambda_iu_{i,1} < \sum_{i=1}^k\lambda_i(n-1)\leq
  n-1$, which is impossible.  Hence, $C_n$ has an irredundant generator of support
  size~$n$.

  (ii) By (i) and Corollary~\ref{c:globallocalcones}, $C_\infty$ is
  not $\Sym$-equivariantly finitely generated.

  (iii) According to Remark~\ref{r:local-consistency}, the consistency
  of $(\ccn,\cncl_\infty)$ (by Lemma~\ref{l:cone-consistency}) gives
	\[
	C_k\cap\RR^{n}_{\geq 0}
	=
	\cncl_k(G_k)\cap\RR^{n}_{\geq 0}
	=
	\cncl_n\big(G_k\cap\RR^{n}_{\geq 0}\big)
	=
	\cncl_n(G_n)
	=
	C_n
	\]
	for all $k\ge n\ge 2.$ It follows that
	\[
	C_\infty\cap\RR^{n}_{\geq 0}
	=
	\bigcup_{k\geq n}C_k\cap\RR^{n}_{\geq 0}
	=
	\bigcup_{k\geq n}\big(C_k\cap\RR^{n}_{\geq 0}\big)
	=
	 C_n
	 \ \text{ for all }\  n\ge 2.\qedhere
	\]
\end{proof}
\end{example}

For the reader's convenience, we briefly summarize the above examples
in Table~\ref{t:summary}.

\begin{table}[bhtp]
\begin{tabular}{|c|c|c|c|c|c|}
\hline
\multirow{2}{*}{\vspace{-1cm}Example}
&\multicolumn{2}{|c|}{(a)}                                                       & \multicolumn{2}{c|}{(b)}                                                                                                                                               & (c)
\\
\cline{2-6}
&$\C$ stabilizes
& \begin{tabular}[c]{@{}c@{}}$\C$  evtl.\ f.\ g.\end{tabular}
& \begin{tabular}[c]{@{}c@{}}$\C$  evtl.\ satur.\end{tabular}
& \begin{tabular}[c]{@{}c@{}}$\C$  evtl.\ f.\ g.\ by\\ elem.\ of  bound.\\ supp.\ size\end{tabular}
& \begin{tabular}[c]{@{}c@{}}$C_\infty$ $\Sym$-\\ equiv.\ f.\ g.\end{tabular}
\\
\hline
\ref{e:limitconevschain-beginning}
&\cmark
& \begin{tabular}[c]{@{}c@{}}\cmark  \\ ($C_2$ not f.\ g.)\end{tabular}
& \begin{tabular}[c]{@{}c@{}}\cmark  \\ ($\C$ not satur.)\end{tabular}
& \cmark                                                                                                   & \cmark
\\
\hline
 \ref{e:counterex-fg}
&\cmark
&\xmark
& \cmark                                                                 & \xmark                                                                                                  & \xmark
\\
\hline
\ref{e:no-global-fg-Cone1}
&\xmark
& \cmark                                                                 & \xmark                                                                 & \cmark                                                                                                   & \xmark
\\
\hline
\ref{e:no-global-fg-Cone2}
&\xmark
& \cmark                                                                 & \cmark                                                                  & \xmark                                                                                                  & \xmark
\\
\hline
\end{tabular}
 \medskip
\caption{Summary of Examples \ref{e:limitconevschain-beginning}--\ref{e:no-global-fg-Cone2}\label{t:summary}}
\end{table}

We consider $\Inc$-invariant chains next. By
Lemmas~\ref{l:cone-consistency} and \ref{l:cone-compatibility}, we
obtain the following cone version of Theorem~\ref{t:local-global-Inc}.

\begin{corollary}
\label{c:cone-local-global-Inc}
Let $\C=(C_{n})_{n\geq 1}$ be an $\Inc$-invariant chain of convex
cones $C_{n} \subseteq \RR^{n}_{\geq 0}$ with limit $C_\infty$.
Consider the following statements:
\begin{enumerate}[(a)]
\item $\C$ stabilizes and is eventually finitely generated.
\item $C_\infty$ is $\Inc$-equivariantly finitely generated.
\end{enumerate}
Then (a) implies (b) and if $\C$ is eventually saturated, then (a) and
(b) are equivalent.
\end{corollary}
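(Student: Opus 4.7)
The plan is to apply Theorem~\ref{t:local-global-Inc} directly, with the chain of closure operations taken to be $\ccn = (\cncl_n)_{n\geq 1}$ and $\cl_\infty = \cncl_\infty$, and the system of maps being $(\cInc, \Inc)$. The work reduces to checking that all hypotheses of the theorem are satisfied in the setting of convex cones in the nonnegative orthants.

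First, I would verify consistency of the closure system. This is precisely the content of Lemma~\ref{l:cone-consistency}: the system $(\ccn, \cncl_\infty)$ is consistent on the ambient chain $\R = (\RR^n_{\geq 0})_{n\geq 1}$. Second, compatibility of $\ccn$ with $\Inc$ is provided by Lemma~\ref{l:cone-compatibility}. Third, since each $C_n \subseteq \RR^n_{\geq 0}$ is by hypothesis a convex cone, the chain $\C$ is automatically $\ccn$-closed. Finally, I would record that $C_\infty = \bigcup_{n\geq 1} C_n$ is itself a convex cone: given $v, w \in C_\infty$ and $\lambda, \mu \geq 0$, there is some $n$ with $v, w \in C_n$, and then $\lambda v + \mu w \in C_n \subseteq C_\infty$ because $C_n$ is a cone. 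Hence $C_\infty$ is $\cncl_\infty$-closed.

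With all the hypotheses of Theorem~\ref{t:local-global-Inc} verified for the $\Inc$-invariant chain $\C$, that theorem applies verbatim and yields both statements: (a) implies (b) unconditionally, and if $\C$ is additionally assumed to be eventually saturated, then (a) and (b) are equivalent.

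I do not expect any real obstacle here; the substantive work has already been carried out in the abstract Theorem~\ref{t:local-global-Inc} and in the two preparatory lemmas on consistency and compatibility. The only delicate point to mention explicitly is that $C_\infty$ is automatically $\cncl_\infty$-closed in this setting, so the $\cl_\infty$-closedness hypothesis of Theorem~\ref{t:local-global-Inc} is free rather than an extra assumption that has to be imposed.
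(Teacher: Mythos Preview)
Your proposal is correct and follows exactly the paper's approach: the paper simply states that by Lemmas~\ref{l:cone-consistency} and~\ref{l:cone-compatibility} one obtains the cone version of Theorem~\ref{t:local-global-Inc}, and your write-up just makes the hypothesis checks explicit (including the observation, noted earlier in the paper, that the limit of a chain of cones is itself a cone and hence $\cncl_\infty$-closed).
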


We again give counterexamples to potential strengthenings of the
corollary.  The first shows that, in contrast to
Corollary~\ref{c:globallocalcones}(b)(i), the equivariant finite
generation of the limit cone $C_\infty$ does not imply that $\C$ is
eventually saturated, and moreover, that the equivalence in
Corollary~\ref{c:cone-local-global-Inc} does not hold without the
assumption that $\C$ is eventually saturated.  By
Theorem~\ref{th:fg-and-stable}(ii), this also implies that the system
$(\cInc,\Inc)$ is not consistent.

\begin{example}
  \label{ex:Inc-not-stable}
  For $n\ge2$ let
  \[ A_n=\{\epsilon_1,\dots,\epsilon_{n-1}\}
    \ \text{ and  } \
    B_n=\{\epsilon_i+n\epsilon_{n}\mid 1\le i\le n-1\}.
  \]
  Consider the chain $\C=(C_n)_{n\geq 1}$ with $C_{1} = \{0\}$ and
  $ C_{n} = \cncl_n(A_n\cup B_n)\subseteq \RR^{n}_{\geq 0}$ for
  $n\ge 2$.  Then the following hold:
\begin{enumerate}
\item $\C$ is an $\Inc$-invariant chain of convex cones and each $C_n$
  is finitely generated by vectors of support size at most two.
\item $C_\infty$ is $\Inc$-equivariantly finitely generated.
\item $\C$ does not stabilize.
\item $C_\infty \cap \RR^n_{\geq 0} \neq C_n$ for all $n\geq 2$.
\end{enumerate}

\begin{proof}
  (i) For $n\ge2$ let
  $ \widetilde{B}_{n+1}=\{\epsilon_i+n\epsilon_{n+1}\mid 1\le i\le
  n\}$.  We claim that
  \[
    \cncl_{n+1}(\Inc_{n,n+1}(C_n))
    =\cncl_{n+1}(A_{n+1}\cup \widetilde{B}_{n+1})
    \ \text{ for }\ n\ge 2.
  \]
  One has $\Inc_{n,n+1}(A_n) = A_{n+1}$ and
  $\Inc_{n,n+1}(B_n) = B_{n}\cup \widetilde{B}_{n+1}$.  Evidently,
  $ B_n\subseteq\cncl_{n+1}(A_{n+1})$ and therefore,
  \begin{align*}
    \cncl_{n+1}({\Inc}_{n,n+1}(C_n))
    &=
      \cncl_{n+1}({\Inc}_{n,n+1}(\cncl_{n}(A_{n}\cup B_n)))
      =
      \cncl_{n+1}({\Inc}_{n,n+1}(A_{n}\cup B_n))\\
    &=
      \cncl_{n+1}(A_{n+1}\cup B_{n}\cup \widetilde{B}_{n+1})
      =
      \cncl_{n+1}(A_{n+1}\cup \widetilde{B}_{n+1}),
\end{align*}
where we used Lemma~\ref{l:compatibility} in the second equality and
also that conical hulls are compatible with $\Inc$ by
Lemma~\ref{l:cone-compatibility}.  Now since
$
(n+1)(\epsilon_i+n\epsilon_{n+1})=\epsilon_i+n(\epsilon_i+(n+1)\epsilon_{n+1})$,
it follows that
$\widetilde{B}_{n+1}\subseteq\cncl_{n+1}(A_{n+1}\cup
{B}_{n+1})$. Hence
\[
  \cncl_{n+1}(\Inc_{n,n+1}(C_n))
  \subseteq\cncl_{n+1}(A_{n+1}\cup {B}_{n+1})
  =C_{n+1}
  \ \text{ for }\ n\ge 2,
\]
i.e.\ $\C$ is an $\Inc$-invariant chain of convex cones. The remaining
assertion is obvious.

(ii)
Since
\[
  \RR^{(\NN)}_{\geq 0}
  =
  \bigcup_{n\ge2}\cncl_n(A_n)
  \subseteq
  \bigcup_{n\ge2} C_n
  =
  C_\infty
  \subseteq
  \RR^{(\NN)}_{\geq 0},
\]
one obtains
$C_\infty = \RR^{(\NN)}_{\geq 0} = \cncl_\infty(\Inc(\epsilon_1))$.
Thus, $C_\infty$ is $\Inc$-equivariantly generated by~$\epsilon_1$.

(iii) For all $n\ge 2$ and $1\le i\le n$ one has
\[
  \epsilon_i+(n+1)\epsilon_{n+1}
  \not\in\cncl_{n+1}(A_{n+1}\cup \widetilde{B}_{n+1})
  =\cncl_{n+1}(\Inc_{n,n+1}(C_n)).
\]
This implies $\cncl_{n+1}(\Inc_{n,n+1}(C_n))\varsubsetneq C_{n+1}$ for
$n\ge 2$, which means that $\C$ does not stabilize.

(iv) The assertion follows since
$\epsilon_n\in\RR^{n}_{\geq 0} = C_\infty\cap\RR^{n}_{\geq 0}$, but
$\epsilon_n\not\in C_n$ for $n\ge2$.
\end{proof}
\end{example}

As announced in Remark~\ref{r:extension}, the equivalence ``(b)
$\Leftrightarrow$ (c)'' in Corollary~\ref{c:globallocalcones} cannot
be extended to $\Inc$-invariant chains.  The following example shows
this and also that the equivalence ``(a) $\Leftrightarrow$ (b)'' in
Corollary~\ref{c:globallocalcones} does not hold for $\Inc$-invariant
chains.

\begin{example}
  \label{ex:Inc-support-size}
  For $n\ge2$ let
  \[
    E_n=\{\epsilon_2,\dots,\epsilon_{n}\} \ \text{ and } \
    F_n=\{i\epsilon_1+\epsilon_{i}\mid 2\le i\le n\}.
  \]
  Consider the chain $\C=(C_n)_{n\geq 1}$ with $C_{1} = \{0\}$ and
  $C_{n} = \cncl_n(E_n\cup F_n)\subseteq \RR^{n}_{\geq 0}$ for
  $n\ge 2$.  Then the following hold:
  \begin{enumerate}
  \item $\C$ is $\Inc$-invariant and each $C_n$ is finitely generated
    in support size at most two.
  \item $C_\infty \cap \RR^n_{\geq 0} = C_n$ for all $n\geq 2$.
  \item $\C$ does not stabilize.
  \item $C_\infty$ is not $\Inc$-equivariantly finitely generated.
\end{enumerate}

\begin{proof}
  (i) For $n\ge2$ let
  $ \widetilde{F}_{n+1}=\{i\epsilon_1+\epsilon_{i+1},\
  i\epsilon_2+\epsilon_{i+1}\mid 2\le i\le n\}$.
  It is evident that
  \begin{align*}
    \Inc_{n,n+1}(E_n)
    = E_{n+1}
    \ \text{ and  } \
    \Inc_{n,n+1}(F_n)
    = F_{n}\cup \widetilde{F}_{n+1}.
  \end{align*}
  Similarly to Example~\ref{ex:Inc-not-stable} one can show that
  \[
    \cncl_{n+1}(\Inc_{n,n+1}(C_n))
    =\cncl_{n+1}(E_{n+1}\cup F_{n}\cup \widetilde{F}_{n+1})
    \ \text{ for }\ n\ge 2.
  \]
  Since $F_{n}\subseteq F_{n+1}$ and
  $\widetilde{F}_{n+1}\subseteq\cncl_{n+1}(E_{n+1}\cup {F}_{n+1})$, it
  follows that
  \begin{align*}
    \cncl_{n+1}({\Inc}_{n,n+1}(C_n))
    \subseteq
    \cncl_{n+1}(E_{n+1}\cup {F}_{n+1})=C_{n+1}.
  \end{align*}
  Hence, $\C$ is an $\Inc$-invariant chain of convex cones. The
  remaining assertion is obvious.
	
  (ii) Similar to Example~\ref{e:no-global-fg-Cone2}(iii) one has
  \[
    C_k\cap\RR^{n}_{\geq 0}
    =
    \cncl_k(E_k\cup F_k)\cap\RR^{n}_{\geq 0}
    =
    \cncl_n\big((E_k\cup F_k)\cap\RR^{n}_{\geq 0}\big)
    =
    \cncl_n(E_n\cup F_n)
    =
    C_n
  \]
  for all $k\ge n\ge 2$.  Therefore,
  \[
    C_\infty\cap\RR^{n}_{\geq 0}
    =
    \bigcup_{k\geq n}C_k\cap\RR^{n}_{\geq 0}
    =
    \bigcup_{k\geq n}\big(C_k\cap\RR^{n}_{\geq 0}\big)
    =
    C_n
    \ \text{ for all }\  n\ge 2.
  \]
	
  (iii) $\C$ does not stabilize since
  $(n+1)\epsilon_1+\epsilon_{n+1} \in C_{n+1}\setminus
  \cncl_{n+1}(\Inc_{n,n+1}(C_n))$ for all $n\ge 2$.
	
  (iv) $C_\infty$ is not $\Inc$-equivariantly finitely generated by
  Corollary~\ref{c:cone-local-global-Inc}.
\end{proof}
\end{example}


All cones considered here are given in their
\emph{$V$-representation}, i.e., generated by vectors.  This is
suitable to consider finite generation of cones.  By the
Minkowski--Weyl theorem \cite[Theorem~1.3]{ziegler1995book}, finitely
generated cones have a finite $H$-representation as an intersection of
linear halfspaces.  It would be interesting to understand the
interaction of this duality with $\Pi$-equivariance.  To work on the
following problem, it might be necessary to replace the ambient space,
which is a direct limit of $(\RR^{n}_{\ge 0})_{n\ge 1}$, with an inverse limit.
In general, inverse limits as ambient spaces provide many interesting
directions for the future.

\begin{problem}
  Let a chain of convex cones be given in their
  $H$-representations.  Find $H$-versions of
  Corollaries~\ref{c:globallocalcones}
  and~\ref{c:cone-local-global-Inc} and relate them to the
  $V$-versions provided there.
  \footnote{This problem has been resolved for $\Sym$-invariant chains of cones in~\cite{LR}.}
\end{problem}

\subsection{Monoids up to symmetry}

In almost complete analogy to the cones discussed above, one can also
consider monoids in $\ZZ_{\geq 0}^{(\NN)}$.  Then the ambient chain is
$\R=(\ZZ_{\geq 0}^n)_{n\ge1}$ with ambient set
$R_\infty=\bigcup_{n\ge1}\ZZ^{n}_{\geq 0}=\ZZ^{(\NN)}_{\geq 0}$.  The
closure operations are $\mndcl_n=\Mon(\cdot)$ in $R_{n}$ for $n\geq 1$
and $\mndcl_\infty=\Mon(\cdot)$ in $R_{\infty}$, where $\Mon(\cdot)$
denotes the monoid closure.  For $A\subseteq\ZZ^{(\NN)}$ it is defined
as
$ \Mon(A)=\{\sum_{i=1}^km_ia_i\mid k\in\NN,\ a_i\in A,\
m_i\in\ZZ_{\geq 0}\}$.
Now the theory develops in complete analogy to the case of cones, so
we omit the details.  One first shows that the system of monoid
closures $(\cmnd,\mndcl_\infty)$ is consistent.  An easy modification
of Example~\ref{ex:cone-inconsistency} shows that
$(\cmnd,\mndcl_\infty)$ is not consistent if one works in the ambient
chain~$(\ZZ^{n})_{n\ge1}$.  As in Lemma~\ref{l:cone-compatibility},
$\cmnd$ is compatible with $\Pi$ for $\Pi = \Sym(\infty)$ and
$\Pi = \Inc$.  Again, if $\M=(M_{n})_{n\geq 1}$ is an increasing chain
of monoids with $M_{n} \subseteq \ZZ^{n}$ for all $n\ge1$, then the
limit $M_\infty$ is a monoid in~$R_{\infty}$.

With these ingredients
one finds the monoid versions of Theorems~\ref{thm:setFG} and
~\ref{t:local-global-Inc}.

\begin{corollary}
\label{c:globallocalmonoids}
Let $\M=(M_{n})_{n\geq 1}$ be a $\Sym$-invariant chain of monoids
$M_{n} \subseteq \ZZ^{n}_{\geq 0}$ with limit~$M_\infty$.  Then the
following statements are equivalent:
\begin{enumerate}[(a)]
\item $\M$ stabilizes and is eventually finitely generated;
\item
  There exists an $r\in\NN$ such that for all $n\ge r$ the following hold:
  \begin{enumerate}[(i)]
  \item $M_\infty \cap \ZZ^{n}_{\geq 0} = M_{n}$,
  \item $M_{n}$ is finitely generated by elements of support size at
    most $r$;
  \end{enumerate}
\item $M_\infty$ is $\Sym$-equivariantly finitely generated.
\end{enumerate}
\end{corollary}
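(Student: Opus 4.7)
The plan is to derive this as a direct corollary of Theorem~\ref{thm:setFG} applied to the chain of monoid closures, exactly paralleling how Corollary~\ref{c:globallocalcones} was obtained from the cone case. The three ingredients I need to supply are: (1) $M_\infty$ is a monoid, i.e.\ $\mndcl_\infty$-closed; (2) the system $(\cmnd,\mndcl_\infty)$ is consistent; and (3) $\cmnd$ is compatible with $\Sym$. Given these, Theorem~\ref{thm:setFG} yields the equivalence of (a), (b), and (c) immediately.

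Item (1) is immediate: any pair $u,v \in M_\infty$ lies in some common $M_n$ (take $n = \max(\width(u),\width(v))$), whose monoid structure gives $u+v \in M_n \subseteq M_\infty$. Item (3) is obtained by transplanting Lemma~\ref{l:cone-compatibility} verbatim, replacing nonnegative real coefficients by nonnegative integer coefficients: for $\pi \in \Sym_{m,n}$ and $v = \sum_i m_i a_i \in \mndcl_m(A_m)$ with $a_i \in A_m$ and $m_i \in \ZZ_{\geq 0}$, the coordinate-wise formula~\eqref{eq:pi} gives $\pi(v) = \sum_i m_i \pi(a_i) \in \mndcl_n(\Sym_{m,n}(A_m))$, since $\pi$ permutes basis vectors and hence commutes with nonnegative integer linear combinations.

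The heart of the argument is (2), the monoid analogue of Lemma~\ref{l:cone-consistency}. The inclusion $\mndcl_n(A \cap \ZZ^n_{\geq 0}) \subseteq \mndcl_\infty(A) \cap \ZZ^n_{\geq 0}$ is trivial. For the reverse, given $v = \sum_{i=1}^k m_i a_i \in \mndcl_\infty(A) \cap \ZZ^n_{\geq 0}$ with $m_i > 0$ and $a_i \in A \subseteq \ZZ^{(\NN)}_{\geq 0}$, nonnegativity of each $a_i$ forbids any cancellation in the sum, so $\width(a_i) \leq \width(v) \leq n$ for every $i$. Hence each $a_i \in A \cap \ZZ^n_{\geq 0}$ and $v \in \mndcl_n(A \cap \ZZ^n_{\geq 0})$. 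The one subtle point worth flagging is that the argument genuinely needs the nonnegativity of the ambient chain $(\ZZ^n_{\geq 0})_{n \geq 1}$; in the ambient chain $(\ZZ^n)_{n \geq 1}$ consistency would fail, as an easy integer adaptation of Example~\ref{ex:cone-inconsistency} shows. Beyond this, no real obstacle arises: the monoid case is bookkeeping on top of the cone case, and invoking Theorem~\ref{thm:setFG} closes the proof.
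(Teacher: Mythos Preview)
Your proposal is correct and follows precisely the approach the paper indicates: the paper states that the monoid theory ``develops in complete analogy to the case of cones,'' omits the details, and obtains Corollary~\ref{c:globallocalmonoids} as the monoid version of Theorem~\ref{thm:setFG} once consistency of $(\cmnd,\mndcl_\infty)$, compatibility of $\cmnd$ with $\Sym$, and closure of $M_\infty$ are checked. You have supplied exactly those details, including the observation that consistency fails over $(\ZZ^n)_{n\ge1}$, which the paper also notes.
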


\begin{corollary}
\label{c:monoid-local-global-Inc}
Let $\M=(M_{n})_{n\geq 1}$ be an $\Inc$-invariant chain of monoids
$M_{n} \subseteq \ZZ^{n}_{\geq 0}$ with limit $M_\infty$.  Consider
the following statements:
\begin{enumerate}[(a)]
\item $\M$ stabilizes and is eventually finitely generated.
\item $M_\infty$ is $\Inc$-equivariantly finitely generated.
\end{enumerate}
Then (a) implies (b) and if $\M$ is eventually saturated, then (a) and
(b) are equivalent.
\end{corollary}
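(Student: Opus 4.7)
The plan is to obtain this corollary as a direct monoid analog of Corollary~\ref{c:cone-local-global-Inc}, by verifying that the general machinery of Theorem~\ref{t:local-global-Inc} applies to monoid closures on the ambient chain $\R=(\ZZ^n_{\geq 0})_{n\ge 1}$. Concretely, I need two ingredients: consistency of the system $(\cmnd,\mndcl_\infty)$, and compatibility of $\cmnd$ with~$\Inc$. Both are essentially routine reproductions of the arguments given for cones.

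First I would establish consistency of $(\cmnd,\mndcl_\infty)$, mirroring Lemma~\ref{l:cone-consistency}. Given $A\subseteq \ZZ^{(\NN)}_{\geq 0}$ and $n\in\NN$, the inclusion $\mndcl_n(A\cap\ZZ^{n}_{\geq 0})\subseteq \mndcl_\infty(A)\cap\ZZ^{n}_{\geq 0}$ is immediate. For the reverse, take $v=\sum_{i=1}^k m_i a_i$ with $m_i\in\ZZ_{\geq 0}$ and $a_i\in A$; after discarding any terms with $m_i=0$ we may assume all $m_i\ge 1$. Because every $a_i\in\ZZ^{(\NN)}_{\geq 0}$ has nonnegative entries, there is no cancellation, so $\supp(a_i)\subseteq\supp(v)$ and hence $\width(a_i)\le\width(v)\le n$. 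Thus $a_i\in A\cap\ZZ^{n}_{\geq 0}$ and $v\in\mndcl_n(A\cap\ZZ^{n}_{\geq 0})$.

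Next I would verify compatibility of $\cmnd$ with $\Inc$ (and with $\Sym$), as in Lemma~\ref{l:cone-compatibility}. For $\pi\in\Inc_{m,n}$ (or $\Sym_{m,n}$) and $v=\sum_{i=1}^k m_i a_i\in\mndcl_m(A_m)$ with $a_i=\sum_{j=1}^m a_{ij}\epsilon_j$, the formula~\eqref{eq:pi} for the action on $R_m$ is $\ZZ$-linear in the canonical basis, hence
\[
\pi(v)=\sum_{j=1}^m\Big(\sum_{i=1}^k m_i a_{ij}\Big)\epsilon_{\pi(j)}=\sum_{i=1}^k m_i\pi(a_i)\in\mndcl_n(\Pi_{m,n}(A_m)).
\]
This shows $\Pi_{m,n}(\mndcl_m(A_m))\subseteq\mndcl_n(\Pi_{m,n}(A_m))$, which is exactly compatibility in the sense of Definition~\ref{d:compatibility}.

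With these two lemmas in hand, I would conclude by invoking Theorem~\ref{t:local-global-Inc} with $(\ccl,\cl_\infty)=(\cmnd,\mndcl_\infty)$ and $\lpi=\cInc$: the implication (a)$\Rightarrow$(b) is immediate, and the reverse implication holds under the eventual saturation hypothesis. A final small bookkeeping point is that the limit $M_\infty=\bigcup_n M_n$ of a chain of monoids is itself $\mndcl_\infty$-closed (any finite monoid combination of elements of $M_\infty$ already lies in some $M_n$, hence in $M_\infty$), so the $\cl_\infty$-closedness hypothesis in Theorem~\ref{t:local-global-Inc} is automatically satisfied. No step presents genuine difficulty; the only mild subtlety is the nonnegativity argument in the consistency proof, which is exactly where restricting from $\ZZ^{(\NN)}$ to $\ZZ^{(\NN)}_{\geq 0}$ becomes essential (as warned in the text via the modified Example~\ref{ex:cone-inconsistency}).
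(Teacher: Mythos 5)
Your proposal is correct and follows exactly the route the paper intends: the paper itself omits the details, stating only that one establishes consistency of $(\cmnd,\mndcl_\infty)$ on the ambient chain $(\ZZ^n_{\geq 0})_{n\ge1}$ and compatibility of $\cmnd$ with $\Inc$ in complete analogy with Lemmas~\ref{l:cone-consistency} and~\ref{l:cone-compatibility}, then applies Theorem~\ref{t:local-global-Inc}. Your filled-in details (the no-cancellation argument for consistency, linearity of the action for compatibility, and the $\mndcl_\infty$-closedness of $M_\infty$) are all correct.
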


Various possible relaxations of the above results are false.  We
briefly discuss two.

\begin{example}
  \label{e:finGenLocal}
  Let $M_{1}=\{0\}$, $M_{2} = \mndcl_2( e_1+ke_2 \mid k\in \NN)$ and
  $M_{n} = \mndcl_n({\Sym}_{2,n}(M_{2}))$ for $n\geq 3$.  Then
  $\M=(M_n)_{n\geq 1}$ is a $\Sym$-invariant chain of monoids.  One
  checks that $M_n$ is not finitely generated for $n\ge2$ and
  $M_\infty$ is not $\Sym$-equivariantly finitely generated.  By
  definition, $\M$ stabilizes.  Thus, in
  Corollary~\ref{c:globallocalmonoids}(a) the second assumption cannot
  be omitted.
\end{example}

\begin{example}
  \label{e:NoHillerSullivant}
  Let $M_1=\{0\}$, $M_{2} = \mndcl_2(e_{1} + 2e_{2})$ and
  $M_{n} = \mndcl_n( \Inc_{n-1,n}(M_{n-1}) \cup \{e_{1} + ne_{n}\} )$
  for $n\geq 3$.  This $\Inc$-invariant chain of finitely generated
  monoids does not stabilize and $M_\infty$ is not
  $\Inc$-equivariantly finitely generated, since for $i<k\leq j$, the
  generator $e_{i} + ke_{j}$ is irredundant.  Thus, the first
  assumption in Corollary~\ref{c:monoid-local-global-Inc}(a) is
  necessary for~(a)$\Rightarrow$(b).
\end{example}

\begin{remark}
  As for the cone and monoid situations studied in this section, one
  can also consider invariant chains of polytopes in
  $\RR_{\geq 0}^{(\NN)}$. It is easy to derive the polytope version of
  Theorems~\ref{thm:setFG} and~\ref{t:local-global-Inc} that are
  analogous to what we have done for cones and monoids.
\end{remark}

\section{Chains of ideals}
\label{sec:ideals}

We apply Theorem~\ref{thm:setFG} and strengthen
Theorem~\ref{t:local-global-Inc} when the equivariant chain consists
of ideals in polynomial rings.  These results are known in the
literature.  However, the proof in~\cite{hillar2012finite} of
Theorem~\ref{thm:Inc-ideal-stable} contains a gap.  We fill this gap.

Consider the ambient chain $\R=(R_n)_{n\ge1}$ of polynomial rings
$R_n=K[X_{n}]$.  Then $R_\infty=K[x_{i,j}\mid i\in [c],\ j\in \NN]$.
For $n\in\NN\cup\{\infty\}$ let $\langle\cdot\rangle_{n}$ denote the
ideal closure operation in~$R_n$, that is, if $A\subseteq R_n$ then
$ \langle A\rangle_{n}=\{\sum_{i=1}^kf_ia_i\mid k\in\NN,\ f_i\in R_n,
\ a_i\in A\}$.  One checks that these closure operations satisfy the
assumptions of Theorems~\ref{thm:setFG} and~\ref{t:local-global-Inc}:
$\langle\cdot\rangle_\infty$ is consistent with the chain
$(\langle\cdot\rangle_{n})_{n\ge1}$; the chain
$(\langle\cdot\rangle_{n})_{n\ge1}$ is compatible with both $\Sym$ and
$\Inc$; and for any chain $\Icc=(I_n)_{n\ge 1}$ of increasing ideals
$I_n\subseteq R_n$ the limit set $I_\infty=\bigcup_{n\ge1} I_n$ is an
ideal in $R_\infty$.

Hilbert's basis theorem (see, e.g. \cite[Theorem~1.2]{Eis95}) says
that $R_n$ is a Noetherian ring for all $n\in\NN$, that is, every
ideal in $R_n$ is finitely generated.  The ring $R_\infty$ is not
Noetherian, but it is $\Pi$-Noetherian for $\Pi=\Sym$ and $\Pi=\Inc$,
in the sense that every $\Pi$-invariant ideal in $R_\infty$ is
$\Pi$-equivariantly finitely generated.
This result was first proved
by Cohen~\cite[Proposition~2]{Co67}, \cite[Theorem~7]{Co87} and later
rediscovered by Aschenbrenner and Hillar~\cite[Theorem~1.1]{AH07} and
Hillar and Sullivant~\cite[Theorem~3.1,
Corollary~3.5]{hillar2012finite}.  Moreover,
\cite[Theorem~3.1]{hillar2012finite} shows that every $\Pi$-invariant
ideal in $R_\infty$ has a finite $\Pi$-Gr\"{o}bner basis with respect
to the lexicographic order $\preccurlyeq$ on $R_\infty$ induced by
\[
  x_{i,j}\preccurlyeq x_{k,l}\Leftrightarrow j<l \ \text{ or }\ j=l \
  \text{ and }\ i\le k.
\]
Here a subset $G$ of a $\Pi$-invariant ideal $I\subseteq R_\infty$ is
a \emph{$\Pi$-Gr\"{o}bner basis} for $I$ with respect to
$\preccurlyeq$, if for any $f\in I$ there exist $g\in G$ and
$\pi\in\Pi$ such that $\ini_\preccurlyeq(f)$ is divisible by
$\ini_\preccurlyeq(\pi(g))$.

The above results show that chains of ideals are better behaved than
chains of cones and monoids.  In particular, the statements of
Theorems~\ref{thm:setFG}(c) and \ref{t:local-global-Inc}(b) on
the equivariant finite generation of the limit ideal are \emph{always
  true} for any $\Sym$- or $\Inc$-invariant chain of ideals.  Thus, for $\Sym$-invariant chains of ideals Theorem~\ref{thm:setFG} immediately yields the following consequence, the
first two statements of which are
\cite[Corollary~3.7]{hillar2012finite} (see also
\cite[Theorem~4.7]{AH07}).

\begin{corollary}
 \label{c:Sym-ideal-stable}
 Let $\Icc=(I_n)_{n\ge 1}$ be a $\Sym$-invariant chain of ideals
 $I_n\subseteq R_n$.  Then
 \begin{enumerate}
 \item $\Icc$ stabilizes.
 \item There exists an $r\in\NN$ such that for all $n\ge1$, $I_n$ is
   finitely generated by elements of support size at most~$r$.
 \item $\Icc$ is eventually saturated.
 \end{enumerate}
\end{corollary}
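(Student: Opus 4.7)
The plan is to obtain this corollary as a direct application of Theorem~\ref{thm:setFG} to the ambient chain $\R=(R_n)_{n\ge 1}$ of polynomial rings $R_n = K[X_n]$, equipped with the chain of ideal closures $(\langle\cdot\rangle_n)_{n\ge 1}$ and global closure $\langle\cdot\rangle_\infty$. The hypotheses of Theorem~\ref{thm:setFG} are all verified in the paragraph immediately preceding the corollary: the system of ideal closures is consistent, it is compatible with $\Sym$, each $I_n$ is $\langle\cdot\rangle_n$-closed (being an ideal), and the limit $I_\infty = \bigcup_{n\ge 1} I_n$ is an ideal in $R_\infty$, hence $\langle\cdot\rangle_\infty$-closed.

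The crucial input is that statement (c) of Theorem~\ref{thm:setFG} is known to hold unconditionally for ideals: $I_\infty$ is a $\Sym$-invariant ideal in $R_\infty$, and by the Cohen / Aschenbrenner--Hillar theorem on equivariant Noetherianity of $R_\infty$ (cited just above the corollary statement), every such ideal is $\Sym$-equivariantly finitely generated. With (c) in hand, the equivalence in Theorem~\ref{thm:setFG} delivers statements (a) and~(b) of that theorem.

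Unpacking these for the current setting, (a) says that $\Icc$ stabilizes and is eventually finitely generated, which gives conclusion~(i); the ``eventually finitely generated'' part is in fact automatic here by Hilbert's basis theorem. From (b) we obtain an integer $r \in \NN$ such that for all $n \ge r$ one has $I_\infty \cap R_n = I_n$ and $I_n$ is finitely generated by elements of support size at most~$r$; the first assertion is conclusion~(iii) (eventual saturation).

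The only remaining task is to upgrade the support-size bound in~(ii) from $n\ge r$ to all $n \ge 1$. For $n < r$, Hilbert's basis theorem already supplies a finite generating set of $I_n \subseteq R_n$, and every element of $R_n$ involves only the variables $\{x_{i,j} : i\in[c],\, j\in[n]\}$, so its support size is at most $n < r$. Hence the bound $r$ works uniformly for all $n$, proving~(ii). I do not anticipate a genuine obstacle here since the argument is essentially a bookkeeping exercise on top of Theorem~\ref{thm:setFG} and the classical equivariant Noetherianity of $R_\infty$; the only point requiring care is this small case-split to cover the tail $n < r$.
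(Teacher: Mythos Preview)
Your proposal is correct and follows precisely the approach indicated in the paper, which simply states that Theorem~\ref{thm:setFG} ``immediately yields'' the corollary once one knows that condition~(c) there always holds by the Cohen/Aschenbrenner--Hillar equivariant Noetherianity theorem. Your explicit case-split for $n<r$ to upgrade the support-size bound from $n\ge r$ to all $n\ge1$ is a detail the paper leaves implicit, and your handling of it is correct.
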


For $\Inc$-invariant chains, using that
Theorem~\ref{t:local-global-Inc}(b) always holds, one obtains:

\begin{corollary}
 \label{c:saturated-Inc}
 Let $\Icc=(I_n)_{n\ge 1}$ be an $\Inc$-invariant chain of ideals with
 $I_n\subseteq R_n$ for all $n\ge1$. If $\Icc$ is eventually
 saturated, then it stabilizes.
\end{corollary}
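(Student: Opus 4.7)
The plan is to apply Theorem~\ref{t:local-global-Inc} directly to the chain $\Icc$. First I would verify that the standing assumptions of that theorem are met in the present ideal setting, as already observed at the beginning of this section: the family $(\langle\cdot\rangle_n)_{n\ge 1}$ together with $\langle\cdot\rangle_\infty$ forms a consistent system of closure operations, this chain is compatible with $\Inc$, and the limit $I_\infty=\bigcup_{n\ge 1}I_n$ is an ideal of $R_\infty$, hence $\langle\cdot\rangle_\infty$-closed. Thus all structural hypotheses of Theorem~\ref{t:local-global-Inc} hold.

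Next, by Lemma~\ref{l:invariant}(i) applied to the $\Inc$-invariant chain $\Icc$, the limit ideal $I_\infty$ is $\Inc$-invariant. The decisive input is then the $\Inc$-Noetherianity of $R_\infty$ recorded earlier in this section (Cohen~\cite{Co67}, and also \cite{AH07,hillar2012finite}): every $\Inc$-invariant ideal of $R_\infty$ is $\Inc$-equivariantly finitely generated. In particular, $I_\infty$ is, which is precisely condition (b) of Theorem~\ref{t:local-global-Inc}.

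Since $\Icc$ is assumed eventually saturated, Theorem~\ref{t:local-global-Inc} asserts the equivalence of (a) and (b); so (a) holds and, in particular, $\Icc$ stabilizes. The eventual finite generation half of (a) is also automatic in this setting from Hilbert's basis theorem applied to each Noetherian ring $R_n$, but it is not needed for the conclusion of the corollary.

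There is no real obstacle in this argument: it is a direct combination of the general local-global framework developed in Section~\ref{section-sets} with the classical $\Inc$-Noetherianity of $R_\infty$. The one point that deserves care is that the eventual saturation hypothesis is not cosmetic but exactly what licenses the reverse direction (b)$\Rightarrow$(a) of Theorem~\ref{t:local-global-Inc}; Example~\ref{ex:Inc-not-stable} in the cone setting shows that, absent saturation, $\Inc$-equivariant finite generation of the limit alone need not force stabilization of the chain.
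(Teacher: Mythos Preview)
Your argument is correct and matches the paper's own reasoning: one invokes $\Inc$-Noetherianity of $R_\infty$ to guarantee condition~(b) of Theorem~\ref{t:local-global-Inc}, and then uses the eventual saturation hypothesis to conclude~(a), hence stabilization. The paper presents this as an immediate consequence in a single sentence, but the content is identical to what you wrote.
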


Surprisingly, compared to cones and monoids, this result can be much
improved.

\begin{theorem}
  \label{thm:Inc-ideal-stable}
  Every $\Inc$-invariant chain of ideals stabilizes.
\end{theorem}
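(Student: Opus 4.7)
The plan is to deduce the theorem by passing to initial ideals, reducing to the combinatorial stabilization of $\Inc$-invariant chains of monomial ideals, and then transferring the result back via Gr\"obner reduction.

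First, I work with the $\Inc$-compatible lex monomial order $\preccurlyeq$ on $R_\infty$ introduced above. Because every $\pi\in\Inc$ acts on variable indices by a strictly monotone map, $\pi$ preserves $\preccurlyeq$, so the leading-term operation commutes with the $\Inc$-action: $\ini_\preccurlyeq(\pi(f))=\pi(\ini_\preccurlyeq(f))$ for all $f\in R_\infty$ and $\pi\in\Inc$. Consequently, for the given $\Inc$-invariant chain $\Icc=(I_n)_{n\ge 1}$, the chain of initial ideals $(\ini_\preccurlyeq(I_n))_{n\ge 1}$ is increasing (because $I_n\subseteq I_{n+1}$ gives $\ini(I_n)\subseteq \ini(I_{n+1})$) and $\Inc$-invariant, and consists of monomial ideals.

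Second, I invoke the stabilization of $\Inc$-invariant chains of monomial ideals. This is a combinatorial fact, resting on the well-quasi-ordering of monomials in $R_\infty$ under $\Inc$-divisibility (Higman's lemma; see~\cite{nagel2017equivariant}). It yields an index $r\in\NN$ such that
\[
\ini_\preccurlyeq(I_n)=\langle \Inc_{r,n}(\ini_\preccurlyeq(I_r))\rangle_n\quad\text{for all }n\ge r.
\]

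Third, I lift this to stabilization of $\Icc$ itself via Gr\"obner reduction. For $n\ge r$, the inclusion $\langle \Inc_{r,n}(I_r)\rangle_n\subseteq I_n$ is immediate from $\Inc$-invariance of $\Icc$. For the reverse, let $f\in I_n$. Since $\ini(f)\in \ini(I_n)=\langle \Inc_{r,n}(\ini(I_r))\rangle_n$, there exist $h\in I_r$, $\pi\in\Inc_{r,n}$, a monomial $m\in R_n$, and $c\in K^\times$ with $\ini(f)=c\,m\,\pi(\ini(h))=c\,\ini(m\,\pi(h))$, using the $\Inc$-equivariance of $\ini_\preccurlyeq$. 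Because $\Icc$ is $\Inc$-invariant, $\pi(h)\in I_n$. Hence $f-c\,m\,\pi(h)\in I_n$ has strictly smaller leading monomial. Iterating this Gr\"obner reduction terminates because $\preccurlyeq$ restricts to a well-order on the Noetherian ring $R_n$, and at termination we obtain $f\in \langle \Inc_{r,n}(I_r)\rangle_n$. Thus $I_n=\langle \Inc_{r,n}(I_r)\rangle_n$ for all $n\ge r$, which is exactly the stabilization assertion.

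The main obstacle is the second step: stabilization of $\Inc$-invariant chains of monomial ideals. This rests on a delicate Higman-style well-quasi-ordering argument for monomials under $\Inc$-divisibility and is precisely the point where the earlier literature (e.g.~\cite{hillar2012finite}) exhibits a gap. The Gr\"obner-basis lift in the third step, by contrast, is a routine consequence of the monomial case, provided the $\Inc$-equivariance of leading terms is in place.
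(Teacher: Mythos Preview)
Your three-step outline---pass to initial ideals, handle the monomial chain, lift back by Gr\"obner reduction---is the paper's strategy; steps~1 and~3 spell out what the paper compresses into the one-line reduction ``we may assume that the chain $\Icc$ consists of monomial ideals'' (citing \cite[Lemma~7.1]{nagel2017equivariant}), and both are carried out correctly.

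The real gap is in step~2, and it is more than a missing citation. You point to ``the well-quasi-ordering of monomials in $R_\infty$ under $\Inc$-divisibility,'' but this \emph{global} WQO is precisely what \cite{hillar2012finite} already establish, and it does not give chain stabilization for non-saturated chains. From it one obtains a finite $\Inc$-Gr\"obner basis of~$I_\infty$; the descent to the chain then relies on \cite[Lemma~2.18]{hillar2012finite}, which is valid only for saturated chains---this is exactly the gap the paper identifies. Concretely: if $u\in I_m$ has $\width(u)=k<m$ and some $\pi\in\Inc$ satisfies $\pi(u)\mid v$ with $v\in R_n$, there need not exist any $\rho\in\Inc_{m,n}$ with $\rho(u)\mid v$, because $\pi|_{[k]}$ may fail to extend to an increasing map $[m]\to[n]$. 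Thus $\Inc$-divisibility in $R_\infty$ is too coarse to control the chain, and your step~2 as written inherits the very defect you attribute to the earlier literature.

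The paper's fix is to apply Higman's lemma not to $\Mo(R_\infty)$ but to the \emph{disjoint union} $\Mo(\R)=\biguplus_{n\ge 1}\Mo(R_n)\cong(\NN^c)^*$ with its Higman order, which records the ambient index: $(u,m)\le(v,n)$ means there exists $\pi\in\oInc_{m,n}$ with $\pi(u)\mid v$. A comparable pair in this order produces a map in $\Inc_{m,n}$ rather than merely in~$\Inc$, and then Lemma~\ref{l:Inc} lets one factor through $\Inc_{n-1,n}$ to conclude $v\in\langle\Inc_{n-1,n}(I_{n-1})\rangle_n$. That is the argument your step~2 must supply.
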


This result is the second part of
\cite[Theorem~3.6]{hillar2012finite}, but its proof contains a gap as
we now explain. In order to show that an $\Inc$-invariant chain of
ideals $\Icc=(I_n)_{n\ge 1}$ stabilizes, the idea in
\cite{hillar2012finite} is to consider the global ideal
$I_\infty=\bigcup_{n\ge1} I_n$, show that this ideal has a finite
$\Inc$-Gr\"{o}bner basis based on Higman's lemma (see
\cite[Theorem~3.1]{hillar2012finite}), and then apply
\cite[Lemma~2.18]{hillar2012finite} to get the desired conclusion (see
\cite[Theorem~2.19]{hillar2012finite}).  However, the problem here is
that \cite[Lemma~2.18]{hillar2012finite} is applicable only to
saturated chains, and so this argument works only for such chains.

To fix the gap, one can still follow the idea of using Higman's lemma
as in \cite{hillar2012finite}, but instead of passing to the global
ideal $I_\infty$, one needs to work directly on the chain $\Icc$. The
following proof was suggested by an anonymous referee and is much
simpler than our original one.

\begin{proof}[Proof of Theorem~\ref{thm:Inc-ideal-stable}]
  Let $\Icc=(I_n)_{n\ge 1}$ be an arbitrary $\Inc$-invariant chain of
  ideals. Using the lexicographic order and a Gr\"obner basis argument, we may assume that the chain
  $\Icc$ consists of monomial ideals (see,
  e.g.\ \cite[Lemma~7.1]{nagel2017equivariant}).
  For every $n\in\NN$
  we have a bijection between the set $\Mo(R_n)$ of monomials of $R_n$
  and $(\NN^c)^n$, in which each $u\in\Mo(R_n)$ is mapped to its
  exponent vector. Thus, there is a bijection between the disjoint
  unions
  \[
    \Mo(\R)\defas\biguplus_{n\ge 1}\Mo(R_n)
    \quad\text{and}\quad
    (\NN^c)^*\defas\biguplus_{n\ge 1}(\NN^c)^n.
  \]
  Endowing $\Mo(\R)$ and $(\NN^c)^*$ with the Higman partial order
  (see, e.g.\ \cite[Definition~3.2]{hillar2012finite}) the above
  bijection is in fact a poset isomorphism. By Higman's
  lemma~\cite[Theorem~4.3]{Hig}, the Higman order is a
  well-partial-order on~$(\NN^c)^*$. Hence, it is so on $\Mo(\R)$ as
  well, and the chain $\Icc$ stabilizes.
\end{proof}

\begin{remark}
  In the context of FI- and OI-modules, a similar idea to the one used
  in the proof of Theorem~\ref{thm:Inc-ideal-stable} was employed in
  \cite{nagel2019FIOI} to show that certain FI- and OI-modules have
  finite Gr\"{o}bner bases (see, in particular,
  \cite[Propositions~5.3, 6.2, Theorem~6.14]{nagel2019FIOI}).
\end{remark}

We conclude this section with a discussion of the stability index of
$\Inc$-invariant chains.

\begin{example}\label{ex:nobound}
  Although any non-saturated $\Inc$-invariant chain of ideals
  stabilizes by Theorem~\ref{thm:Inc-ideal-stable}, its stability
  index can behave very badly, unlike that of a saturated chain.  Let
  $\Icc=(I_n)_{n\ge 1}$ be an $\Inc$-invariant chain of ideals and let
  $G$ be a finite $\Inc$-Gr\"{o}bner basis for the limit ideal
  $I_\infty$ with respect to the lexicographic order
  $\preccurlyeq$. If $\Icc$ is saturated and $G\subseteq R_r$ for some
  $r\in\NN$, then $\ind(\Icc) \le r$ by
  \cite[Lemma~2.18]{hillar2012finite}.  However, such a
  bound does not hold if $\Icc$ is not saturated. Indeed, let $m\in\NN$ and
  consider the chain $\Icc=(I_n)_{n\ge 1}$ with
 \[
  I_n=\begin{cases}
       \langle x_1^2,x_2^2,\dots,x_n^2\rangle_n&\text{if }\ n\le m-1,\\
       \langle x_1,x_2^2,\dots,x_m^2\rangle_n&\text{if }\ n= m,\\
       \langle \Inc_{m, n}(I_{m})\rangle_n&\text{if }\ n\ge m+1.
      \end{cases}
 \]
 Then $\ind(\Icc)=m$, while $G=\{x_1\}\subseteq R_1$ is an
 $\Inc$-Gr\"{o}bner basis for the limit ideal
 $ I_\infty=\bigcup_{n\ge 1}I_n =\langle
 x_1,x_2,x_3,\dots\rangle_\infty =\langle \Inc(x_1)\rangle_\infty$.
 This shows that there is no bound for $\ind(\Icc)$ in terms of the
 smallest index $r$ with $G\subseteq R_r$.
\end{example}

For non-saturated chains one may try to replace the condition
``$G\subseteq R_r$'' by ``$G\subseteq I_s$'' and ask for a bound for
$\ind(\Icc)$ in terms of the smallest index $s$ with $G\subseteq I_s$.
This is a reasonable question because in Example~\ref{ex:nobound},
$G=\{x_1\}\subseteq I_m$ and $s=m=\ind(\Icc)$.  Unfortunately, this also
does not work.  To build an example, note that in the chain $\Icc$
above one has
\[
 I_{m+j}=\langle \Inc_{m, m+j}(I_{m})\rangle_{m+j}
 =\langle x_1,\dots,x_{j+1},x_{j+2}^2,\dots,x_{m+j}^2\rangle_{m+j}
 \quad\text{for }\ j\ge 0.
\]
Now take $l\in\NN$ and consider the chain $\Icc'=(I_n')_{n\ge 1}$ with
\[
  I_n'=\begin{cases}
    I_n&\text{if }\ n\le m+l-1,\\
    I_{m+l}+\langle x_{l+2}\rangle_{n}&\text{if }\ n= m+l,\\
    \langle \Inc_{m+l, n}(I'_{m+l})\rangle_{n}&\text{if }\ n\ge m+l+1.
  \end{cases}
\]
It is clear that
$I'_\infty=\bigcup_{n\ge 1}I'_n =\langle
x_1,x_2,x_3,\dots\rangle_\infty =I_\infty$.  Thus,
$G'=G=\{x_1\}\subseteq I'_m$ is an $\Inc$-Gr\"{o}bner basis for
$I'$. However, $\ind(\Icc')=m+l$ is not bounded by $m$ as $l$ is
arbitrary.

{\bf Acknowledgment.} We would like to thank the anonymous referees
for their insightful and constructive suggestions.

 \bigskip \medskip

 \noindent
 \footnotesize {\bf Authors' addresses:}

 \smallskip

 \noindent Thomas Kahle, OvGU Magdeburg, Germany,
 {\tt thomas.kahle@ovgu.de}

 \noindent Dinh Van Le, Universit\"at Osnabr\"uck, Osnabr\"uck, Germany,
 {\tt dlevan@uos.de}

 \noindent Tim R\"{o}mer, Universit\"at Osnabr\"uck, Osnabr\"uck, Germany,
 {\tt troemer@uos.de}


\begin{thebibliography}{99}

\bibitem{AH07}
M. Aschenbrenner and C.J. Hillar,
\emph{Finite generation of symmetric ideals}.
Trans. Amer. Math. Soc. {\bf 359} (2007), no. 11, 5171--5192.

\bibitem{symmetric-poly}
  D.~Bremner, M.D.~Sikiric, and A.~Sch\"{u}rmann,
  \emph{Polyhedral representation conversion up to symmetries}.
In: \emph{CRM Proceedings and Lecture Notes} {\bf 48} (2009), 45--72.

\bibitem{bruns2009book}
W. Bruns and J. Gubeladze,
\emph{Polytopes, rings, and $K$-theory}.
Springer Monographs in Mathematics, Springer, 2009.

\bibitem{ChElFa14}
T.~Church, J.S.~Ellenberg, B.~Farb, and R.~Nagpal,
\emph{FI-modules over Noetherian rings}.
Geom. Topol. {\bf 18} (2014), no. 5, 2951--2984.

\bibitem{FI}
T.~Church, J.S.~Ellenberg, and B.~Farb,
\emph{FI-modules and stability for representations of symmetric groups}.
Duke Math. J.  {\bf 164} (2015), no. 9, 1833--1910.

\bibitem{ChEl17}
T.~Church and J.S.~Ellenberg,
\emph{Homology of FI-modules}.
Geom. Topol. {\bf 21} (2017), no. 4, 2373--2418.

\bibitem{Co67}
D.E. Cohen,
\emph{On the laws of a metabelian variety}.
J. Algebra {\bf 5} (1967), 267--273.

\bibitem{Co87}
D.E. Cohen,
\emph{Closure relations, Buchberger's algorithm, and polynomials in infinitely many variables}.
In: \emph{Computation theory and logic}, 78--87, Lecture Notes in Comput. Sci., {\bf 270}, Springer, Berlin, 1987.

\bibitem{Dr14}
J. Draisma,
\emph{Noetherianity up to symmetry}.
In: {Combinatorial algebraic geometry},
Lecture Notes in Mathematics {\bf 2108}, pp. 33--61, Springer, 2014.

\bibitem{Eis95}
D. Eisenbud,
\emph{Commutative algebra. With a view toward algebraic geometry}. Graduate Texts in Mathematics {\bf  150}, Springer, 1995.

\bibitem{NG18}
  S.~G{\"u}nt{\"u}rk{\"u}n and U.~Nagel
  \emph{Equivariant Hilbert series of monomial orbits}.
  Proc. AMS.
  {\bf 146} (2018) no. 6, 2381--2393.

\bibitem{Hig}
G. Higman,
\emph{Orderings by divisibility in abstract algebras}.
Proc. London Math. Soc. (3) {\bf 2} (1952), 326--336.


\bibitem{hillar2012finite}
C.J. Hillar and S. Sullivant,
\emph{Finite Gr\"{o}bner bases in infinite dimensional polynomial rings and applications}.
Adv. Math. {\bf 229} (2012), no. 1, 1--25.

\bibitem{kahle2014equivariant}
T. Kahle, R. Krone, and A. Leykin,
\emph{Equivariant lattice generators and Markov bases}.
In: \emph{ISSAC 2014--Proceedings of the 39th International Symposium on Symbolic and Algebraic Computation}, 264--271, ACM, New York, 2014.

\bibitem{LNNR}
D.V. Le, U. Nagel, H.D. Nguyen, and T. R\"{o}mer,
\emph{Castelnuovo-Mumford regularity up to symmetry}.
Int. Math. Res. Not. {\bf 14} (2021), 11010--11049.

\bibitem{LNNR2}
D.V. Le, U. Nagel, H.D. Nguyen, and T. R\"{o}mer,
\emph{Codimension and projective dimension up to symmetry}.
Math. Nachr. {\bf 293} (2) (2020), 346--362.

\bibitem{le2020kruskal}
D.V. Le and T. R\"{o}mer,
\emph{A Kruskal-Katona type theorem and applications}.
Discrete Math. {\bf 343} (5) (2020).

\bibitem{LR}
 D.V. Le and T. R\"{o}mer,
\emph{Theorems of Carath\'{e}odory, Minkowski-Weyl, and Gordan
up to symmetry}.
Preprint, 2021, available at
\href{https://arxiv.org/abs/2110.10657}{arXiv:2110.10657}.

\bibitem{moore1910}
E.H. Moore,
\emph{Introduction to a form of general analysis}.
The New Haven Mathematical Colloquium, Yale University Press, New Haven, 1910.

\bibitem{nagel2017equivariant}
U. Nagel and T. R\"{o}mer,
\emph{Equivariant Hilbert series in non-Noetherian polynomial rings}.
J. Algebra {\bf 486} (2017), 204--245.

\bibitem{nagel2019FIOI}
U. Nagel and T. R\"{o}mer,
\emph{{\rm FI}- and {\rm OI}-modules with varying coefficients}.
J. Algebra {\bf 535} (2019), 286--322.

\bibitem{GL-sam-snowden}
  S.~Sam and A.~Snowden,
  \emph{$GL$-equivariant modules over polynomial rings in infinitely
    many variables}.
Trans. Amer. Math. Soc. {\bf 368} (2016), no.~2, 1097--1158.

\bibitem{SaSn17}
S.~Sam and A.~Snowden,
\emph{Gr\"obner methods for representations of combinatorial categories}.
J. Amer. Math. Soc. {\bf 30} (2017), no. 1, 159--203.

\bibitem{ziegler1995book}
G.M. Ziegler,
\emph{Lectures on Polytopes}. Graduate Texts in Mathematics {\bf 152}, Springer, 1995.

\end{thebibliography}
\end{document}